\definecolor{mygreen1}{cmyk}{0.1, 0.8, 0.8,0.4}
\definecolor{mygreen}{rgb}{0.1, 0.99, 0.478}
\definecolor{darkred}{rgb}{0.9,0.1,0.1}
\newtheorem{theorem}{Theorem}[section]
\newtheorem{proposition}[theorem]{Proposition}
\newtheorem{lemma}[theorem]{Lemma}
\newtheorem{corollary}[theorem]{Corollary}
\newtheorem{remark}[theorem]{Remark}
\title{Pointwise estimates for heat kernels of convolution type operators \thanks{The work was supported by SFB1283 of
German Research Council. The third and the forth authors were partially supported by the Russian Science Foundation, Project ¹ 14-50-00150} }
\author{A. Grigor'yan$^1$ , Yu. Kondratiev$^1$,  A. Piatnitski$^{2,3}$ and E. Zhizhina$^3$}
\begin{document}

\maketitle

%\parskip 0.1 truein
\begin{center}\small
$^1$ {Fakult\"at f\"ur
 Mathematik, Universit\"at Bielefeld, 33501 Bielefeld, Germany}
\end{center}

\begin{center} \small
$^2$
The Arctic University of Norway,
Campus  Narvik,
Postbox 385, 8505 Narvik, Norway
\end{center}

\begin{center}\small
$^3$
Institute for Information Transmission Problems RAS, Moscow, 127051 Russia
\end{center}

\noindent
\textit{Keywords}: heat kernel, nonlocal convolution type operators, large deviations principle, distributions with light tails.

\tableofcontents

\section{Introduction}

In this paper we are concerned with estimates of the heat kernel
(=fundamental solution) of certain evolution equations with non-local
elliptic part. The heat kernel of the classical heat equation
\begin{equation*}
\partial _{t}u-\Delta u=0,
\end{equation*}
where $\Delta $ is the Laplace operator in $\mathbb{R}^{d}$, is given by the
Gauss-Weierstrass function
\begin{equation}\label{lhe}
p_{t}\left( x,y\right) =\frac{1}{\left( 4\pi t\right) ^{d/2}}\exp \left( -%
\frac{\left\vert x-y\right\vert ^{2}}{4t}\right) .
\end{equation}%
For a more general parabolic equation
\begin{equation*}
\partial _{t}u-Lu=0,
\end{equation*}
where $L$ is a uniformly elliptic second order operator in divergence form,
Aronson \cite{Aron} proved the following Gaussian estimates for its heat
kernel:%
\begin{equation*}
p_{t}\left( x,y\right) \asymp \frac{C}{t^{d/2}}\exp \left( -\frac{\left\vert
x-y\right\vert ^{2}}{ct}\right) ,
\end{equation*}%
where the sign $\asymp $ means both $\leq $ and $\geq $ but with different
values of positive constants $C,c$.

A simplest heat equation with non-local elliptic part is
\begin{equation}
\partial _{t}u+\left( -\Delta \right) ^{\alpha /2}u=0,  \label{Dea}
\end{equation}%
where $0<\alpha <2$.  Applying the subordination techniques of \cite{Zol} to
the Gauss-Weierstrass function, one obtains that the heat kernel of (\ref%
{Dea}) satisfies the following estimates%
\begin{equation}
p_{t}\left( x,y\right) \asymp \frac{C}{t^{d/\alpha }}\left( 1+\frac{%
\left\vert x-y\right\vert }{t^{1/\alpha }}\right) ^{-\left( d+\alpha \right)
}  \label{stable}
\end{equation}%
(see also \cite{Bend}). Note that $\left( -\Delta \right) ^{\alpha /2}$ is
an integro-differential operator of the form%
\begin{equation}
\left( -\Delta \right) ^{\alpha /2}f\left( x\right) =c_{d,\alpha }\,\mathrm{%
p.v.}\int_{\mathbb{R}^{d}}\frac{f\left( x\right) -f\left( y\right) }{%
\left\vert x-y\right\vert ^{d+\alpha }}dy.  \label{deal}
\end{equation}%
The heavy tail of the heat kernel in the estimate (\ref{stable}) is a
consequence of the heavy integral kernel in (\ref{deal}). Similar estimates hold also
for non-local heat kernels on fractals \cite{Gri}.

A natural class of non-local operators arises on graphs. Let $\Gamma $ be a
countable, locally finite, connected graph. Let $d\left( x,y\right) $ be the
graph distance on $\Gamma $. The discrete Laplace operator  $\Delta $ on $%
\Gamma $ acts on functions $f:\Gamma \rightarrow \mathbb{R}$ as follows:%
\begin{equation*}
\Delta f\left( x\right) =\frac{1}{\deg \left( x\right) }\sum_{\{y\in \Gamma
:y\sim x\}}\left( f\left( y\right) -f\left( x\right) \right) =\sum_{y\in
\Gamma }\left( f\left( y\right) -f\left( x\right) \right) J\left( x,y\right)
,
\end{equation*}%
where
\begin{equation*}
J\left( x,y\right) =\frac{1}{\deg \left( x\right) }\mathbf{1}_{\left\{
d\left( x,y\right) =1\right\} }.
\end{equation*}%
Davies has obtained in \cite{Davies} the upper bounds of the heat kernel $%
p_{t}\left( x,y\right) $ of the heat equation $\partial _{t}u-\Delta u=0$ on
$\Gamma $ that in the case of uniformly bounded degree $\deg \left( x\right)
$ of vertices amounts to
\begin{equation}
p_{t}\left( x,y\right) \leq \exp \left( -ct\Phi \left( \frac{d\left(
x,y\right) }{ct}\right) \right) ,  \label{Dav}
\end{equation}%
where%
\begin{equation*}
\Phi \left( \xi \right) =\sup_{\lambda >0}\left\{ \xi \lambda -\cosh \lambda
\right\} =\xi \ln \left( \xi +\sqrt{\xi ^{2}+1}\right) -\sqrt{1+\xi ^{2}}.
\end{equation*}%
Since
\begin{equation}\label{davies}
\Phi \left( \xi \right) \sim \frac{\xi ^{2}}{2}\ \ \text{as\ }\xi
\rightarrow 0\ \ \ \text{and\ }\ \ \Phi \left( \xi \right) \sim \xi \ln \xi
\ \ \text{as }\xi \rightarrow \infty ,
\end{equation}%
the estimate (\ref{Dav}) implies for small $\frac{d\left( x,y\right) }{t}$
the Gaussian estimate%
\begin{equation*}
p_{t}\left( x,y\right) \leq \exp \left( -\frac{d^{2}\left( x,y\right) }{ct}%
\right) ,
\end{equation*}%
and for large $\frac{d\left( x,y\right) }{t}$
\begin{equation*}
p_{t}\left( x,y\right) \leq \exp \left( -cd\left( x,y\right) \ln \frac{%
d\left( x,y\right) }{ct}\right) .
\end{equation*}%
The estimate (\ref{Dav}) gives a rather sharp upper bound of the tail of the
heat kernel on an arbitrary graph because on $\Gamma =\mathbb{Z}$ the heat
kernel admits the following two-sided estimate%
\begin{equation*}
p_{t}\left( x,y\right) \asymp \frac{C}{\left( t+d\left( x,y\right) \right)
^{1/2}}\left( -2t\Phi \left( \frac{d\left( x,y\right) }{2t}\right) \right)
\end{equation*}%
(see \cite{Pang}).

In this paper we consider the non-local operator $A$ on functions $f:\mathbb{%
R}^{d}\rightarrow \mathbb{R}$ given by%
\begin{equation}\label{gene}
Af=a\ast f-f,
\end{equation}%
where the convolution kernel $a$ is such that
\begin{equation}\label{a1}
a(x) \ge 0; \quad a(x) = a(-x); \qquad  a(x) \in L^{\infty}(\mathbb R^d) \cap L^1(\mathbb R^d),
\end{equation}
\begin{equation}\label{a2}
\int_{\mathbb R^d} a(x) dx =1, \quad \int_{\mathbb R^d} |x|^2 a(x) dx  < \infty.
\end{equation}
In particular, under condition \eqref{a2} there exists a positive definite matrix $\sigma = \{ \sigma_{i j} \}$ with $\sigma_{i j} = \int_{\mathbb R^d} x_i x_j a(x) dx$.
The third condition in \eqref{a1} implies that $a(x) \in L^2(\mathbb R^d)$, and for the Fourier transform $\hat a(p)$ we have
\begin{equation}\label{ahat}
\hat a(p) \in C_b(\mathbb R^d) \cap L^2(\mathbb R^d), \quad \max_{\mathbb R^d} \hat a(p) = \hat a(0) =1, \quad \hat a(p) \to 0 \;
\mbox{ as } \; |p| \to \infty.
\end{equation}
The operator $A$ takes a form of an integro-differential operator as
follows:%
\begin{equation*}
Af\left( x\right) =\int_{\mathbb{R}^{d}}\left( f\left( y\right) -f\left(
x\right) \right) a\left( x-y\right) dy.
\end{equation*}%
An essential difference from the operator (\ref{deal}) is that the integral
kernel $a\left( x-y\right) $ of $A$ is bounded and integrable. Surprisingly,
these assumptions do not make the task of estimating of the heat kernel
easier.

Since $A$ is a bounded operator in $L^{2}\left( \mathbb{R}^{d}\right) $, its
heat semigroup $e^{tA}$ can be easily computed by using the exponential
series that leads to%
\begin{equation*}
e^{tA}=e^{-t}e^{ta\ast }=e^{-t}\sum_{k=0}^{\infty
}t^{k}\frac{a^{\ast k}}{k!}=e^{-t}\mathrm{Id}+e^{-t}\sum_{k=1}^{\infty }t^{k}%
\frac{a^{\ast k}}{k!},
\end{equation*}%
By removing the singular part $e^{-t}\mathrm{Id}$ of the heat semigroup, we
obtain the \emph{regularized} heat kernel%
\begin{equation}\label{v}
v\left( x,t\right) =e^{-t}\sum_{k=1}^{\infty }t^{k}\frac{a^{\ast k}\left(
x\right) }{k!}
\end{equation}%
with the source at the origin.
In other words, for any $f\in L^{2}\left( \mathbb{R}^{d}\right) $, a solution to the non-local Cauchy problem
\begin{equation}\label{nlcau}
  \begin{array}{c}
    \partial_t u-Au=0, \\[1mm]
    \displaystyle
    u\big|_{t=0}=f
  \end{array}
\end{equation}
has the form $u(x,t)=e^{-t}f(x)+(v\ast f)(x,t)$ with $v$ given by \eqref{v}.
In particular, the fundamental solution of the problem (\ref{nlcau}) is
\[
u\left( x,t\right) =e^{-t}\delta \left( x\right) +v\left( x,t\right) .
\]
 The function $v$ is the main subject of this paper.

A probabilistic interpretation of the function $v(x,t)$ is of great interest. Under conditions \eqref{a1}, \eqref{a2} the operator $A$ defined in \eqref{gene} is a generator of   a continuous time Markov jump process. If this process starts at zero,
its transition probability has a regular  part and a singularity at zero,
and $v(x,t)$ is the density of the regular part. The results of this work allow us to describe the large time behaviour of this Markov
process in different regions of the space.  In particular we obtain the local moderate and large deviations results for this Markov process.

Recent years there is an essential progress in studying the large time behaviour of solutions to evolution problems in $\mathbb R^d$
for convolution type operators with integrable kernels, see, for instance, \cite{AMRT}, \cite{CCR}, \cite{CFRT}, and the references therein.
One of the key questions of interest here is obtaining point-wise estimates for the corresponding nonlocal heat kernels and solutions. To our best knowledge there are just few papers devoted to this topic. In \cite{BCF} the asymptotic behaviour of fundamental solution  for evolution equations with a convolution kernel has been considered.  For Gaussian and compactly supported kernels that are radially symmetric, two-sided estimates  have been obtained. Since \cite{BCF} mostly deals with problems with unbounded initial conditions,
the authors focuses  on the behaviour of heat kernel  in the region of extra large  $|x|\gg t$, and their estimates are rather loose in other regions. The kernels showing sub-exponential decay at infinity have been studied in \cite{FiTk},  this work deals with the asymptotic behaviour of the fundamental solution in the region $|x|\gg t$.

Closely related results on point-wise estimates for a resolvent kernel of non-local convolution type operators have been obtained in the
recent work \cite{komopizhi}. Both polynomially and exponentially decaying kernels were considered. With the help of these estimates
point-wise bounds for the principal eigenfunction of non-local Schr\"odinger operator were deduced.

%The  convolution kernels that decay super exponentially have been studied in \cite{KMV}.

\smallskip
In the present paper we deal with convolution kernels $a(x)$ that decay at infinity at least exponentially and  admit an estimate from above by a radially symmetric function:  $a(x)\leq ce^{-b|x|^p}$ with $b>0$ and $p\geq 1$.

The large time behaviour of the studied heat kernel depends crucially on the relation between $|x|$ and $t$. We consider separately
four different regions in $(x,t)$ space, namely,\\ (i) $|x|=O(t^{1/2})$,\\  (ii) $t^\frac12\ll|x|\ll t$,\\ (iii) $|x|\sim t$,\\
 (iv) $|x|\gg t$.\\
 In particular, it will be shown that in the region (iv) the function \ $-\ln v(x,t)$ behaves like $|x|\Big(\ln\frac{|x|}t\Big)^\frac{p-1}p$
 for $a(x)\sim e^{-b|x|^p}$ with $p\geq 1$, and like $|x|\ln\frac{|x|}t$ for $a(x)$ with a finite support.

 Remark that for the corresponding Markov jump process with the generator defined in \eqref{gene} the region (i) corresponds to the standard deviations where the local central
 limit theorem applies, (ii) is the region of the moderate deviations, (iii)  is the region of large deviation, and (iv) should probably
 be called the "extra large" deviation region.

Before considering the case of  generic convolution kernels with a light tail we first study the  Gaussian kernels for which the
k-th convolution admits an explicit formula. This allows us to find the asymptotics of the corresponding heat kernel in all the regions
mentioned above, see Theorem \ref{Gauss}.

The Gaussian asymptotics of a generic non-local heat kernel in the region (i) is a consequence of the (local) central limit theorem.
It is interesting to observe that in the region (ii) the logarithmic asymptotics of the non-local heat kernel still remains the same as
for the classical heat kernel with the covariance matrix $\sigma$, see Theorem \ref{LTail-1var2}.
The transition between Gaussian and non-Gaussian behaviour occurs in the region $x=rt$. For small $r$ the behaviour
is still close to Gaussian, while as $r\to\infty$ the asymptotics of the non-local heat kernel does not look like Gaussian at all,
as shown in Theorems \ref{LTail-2_1D} and \ref{LTail-2_MD}.
The difference is getting even more drastic in the region $|x|\gg t$, see Theorem \ref{LTail-1var3}.

%The function $v(x,t)$ is the regular part of the nonlocal heat kernel $u(x,t)$. In the paper we obtain the large time asymptotics of
%$v(x,t)$ for the Gaussian kernel and sharp asymptotic upper bounds for $v(x,t)$ as $t \to \infty$ in the case of the generic
%convolution kernel $a(z)$ with light tails.

\section{Gaussian convolution kernel}
\setcounter{equation}{0}

We consider in this section the case of a Gaussian convolution kernel:
\begin{equation}\label{ga}
a(x) \ = \
\frac{1}{(4 \pi)^{d/2}} \ e^{-\frac{x^2}{4}} , \quad \hat a(p) \ = \ e^{-p^2}.
\end{equation}
In this case the convolutions $a^{*k}(x)$ admit explicit formulae for all $k\geq 1$
which essentially simplify our analysis. The large time asymptotics (or log asymptotics) of the fundamental solution depends essentially on
the relation between $x$  and $t$. We consider separately four different regions in $(x,t)$-space, namely,   $|x|= O( t^{\frac12})$ and $|x| \sim t^{\frac{1+\delta}{2}}$ with  $0<\delta<1$, or $\delta=1$,  or $\delta>1$.

Denote $\Phi_G(r)=1+2\xi_r\ln \xi_r-\xi_r$, where $\xi_r$ is a solution to the equation $\xi^2\ln\xi=\frac{r^2}4$.

\begin{theorem}[Gaussian kernel]\label{Gauss}
Let the convolution kernel $a(x)$ be defined by \eqref{ga}. Then for the function $v(x,t)$ defined by \eqref{v} the following asymptotics holds as $t \to \infty$  (see Figure {\rm \ref{f_gauss}}): \\
1) For any $r>0$, if $|x| \le r t^{\frac12}$, then
\begin{equation}\label{GaussSD}
v(x,t) = \frac{1}{(4 \pi t)^{d/2}} e^{-\frac{x^2}{4t}} (1+ o(t^{-\frac14})).
\end{equation}
2) For any $r>0$, if $|x| = r t^{\frac{1+\delta}{2}}$ with  $0<\delta<1$, then
\begin{equation}\label{GaussMD}
\frac{\ln v(x,t)}{\frac{x^2}{4t}} \to -1.
\end{equation}
In particular, if $\; r_1 t^{\frac{1+\delta}{2}} \le |x| \le r_2 t^{\frac{1+\delta}{2}}$ with some $0< r_1 < r_2$ and  $0<\delta<1$, then
$$
 e^{- \frac{r_2^2}{4} t^{\delta}(1 + o(1)) } \ \le \ v(x,t) \ \le \  e^{- \frac{r_1^2}{4} t^{\delta}(1 + o(1)).}
$$
3)   For any $r>0$, if $|x| = r t$, then %$\frac{x^2}{4t} = \frac{r^2}{4} t$ and
\begin{equation}\label{GaussLD}
\frac{\ln v(x,t)}{t} \to - \Phi_G (r).
\end{equation}
Furthermore, the function   $\Phi_G (r)$ possesses the following properties:
$$
\begin{array}{cl}
 0<\Phi_G (r)<r^2/4  \ \ \ & \mbox{ for all } \ r\not=0,  \\[2mm]
  \Phi_G (r) = \frac{r^2}{4}(1+o(1)) \ \ \ &  \mbox{ as } \ r \to 0+\\[2mm]
  \Phi_G (r)=  r \sqrt{\ln r} (1+ o(1)) \ \ \ & \mbox{ as } \  r \to \infty
\end{array}
$$
4) If $|x| > t^{\frac{1+\delta}{2}}$ with $\delta>1$, then
\begin{equation}\label{GaussSLD}
\frac{\ln v(x,t)}{ |x| \sqrt{\ln \frac{|x|}{t}}} \to  -1.
\end{equation}

\end{theorem}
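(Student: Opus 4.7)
The starting point is to write $v$ in closed form. Since $\widehat{a^{*k}}(p)=\hat{a}(p)^k=e^{-kp^2}$, Fourier inversion gives $a^{*k}(x)=(4\pi k)^{-d/2}\exp(-x^2/(4k))$, and \eqref{v} becomes
\begin{equation*}
v(x,t)=e^{-t}\sum_{k=1}^\infty\frac{t^k}{k!\,(4\pi k)^{d/2}}\exp\!\Big(-\frac{x^2}{4k}\Big).
\end{equation*}
After applying Stirling to $k!$, each term has the form $\varphi_k\exp(g(k))$ with $\varphi_k=O(k^{-(d+1)/2})$ and $g(k)=-t+k\ln(t/k)+k-x^2/(4k)$. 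All four parts of the theorem reduce to a Laplace-type evaluation of this series, the regimes differing only in the location of the saddle.

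The stationarity condition $g'(k)=\ln(t/k)+x^2/(4k^2)=0$ has a unique solution $k^*=t\xi_*$ where $\xi_*^2\ln\xi_*=(|x|/2t)^2$; this matches the definition of $\xi_r$ in the statement when $|x|=rt$. Using stationarity to eliminate $\ln(t/k^*)$ one obtains the identity
\begin{equation*}
g(k^*)=-t+k^*-\frac{x^2}{2k^*}=-t\bigl(1-\xi_*+2\xi_*\ln\xi_*\bigr)=-t\,\Phi_G(|x|/t),
\end{equation*}
where the middle equality uses $x^2/(2tk^*)=2\xi_*\ln\xi_*$. For part (3), where $|x|=rt$ fixes $\xi_*=\xi_r$, a standard saddle-point argument applies: $g$ is concave at $k^*$ with $g''(k^*)\asymp -1/t$, so summing over $|k-k^*|\lesssim\sqrt{t}$ recovers $\exp(g(k^*))$ up to a polynomial-in-$t$ prefactor while the tails are exponentially subdominant, and dividing $\ln v$ by $t$ gives \eqref{GaussLD}. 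The properties of $\Phi_G$ are elementary: $h(\xi):=1-\xi+2\xi\ln\xi$ satisfies $h(1)=0$ and $h'(\xi)=1+2\ln\xi>0$ on $[1,\infty)$, so $\Phi_G(r)=h(\xi_r)>0$; the bound $\Phi_G(r)<r^2/4$ reduces to positivity on $(1,\infty)$ of $\xi^2\ln\xi-2\xi\ln\xi+\xi-1$, whose derivative factors as $(\xi-1)(2\ln\xi+1)$; solving $\xi^2\ln\xi=r^2/4$ asymptotically gives $\xi_r=1+r^2/4+O(r^4)$ as $r\to 0$ and $\xi_r\sim r/(2\sqrt{\ln r})$ as $r\to\infty$, producing the stated small- and large-$r$ behavior of $\Phi_G$.

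Parts (2) and (4) follow from the same saddle identity combined with these asymptotics. For (2), $|x|/t=rt^{(\delta-1)/2}\to 0$ yields $t\,\Phi_G(|x|/t)\sim x^2/(4t)$; for (4), $|x|/t\to\infty$ yields $t\,\Phi_G(|x|/t)\sim|x|\sqrt{\ln(|x|/t)}$. In both cases the $O(\ln t+\ln(|x|/t))$ correction from the Laplace prefactor is negligible against the main term; the upper bound follows from $\exp(g(k))\le\exp(g(k^*))$ summed termwise, and the matching lower bound from retaining the single term $k=\lfloor k^*\rfloor$ after Stirling. Part (1) requires the sharp Gaussian prefactor rather than only the log-asymptotic: for $|x|\le rt^{1/2}$ the saddle lies at $k^*=t+O(1)$, so I would refine the Laplace expansion by using Stirling with the $1/(12k)$ correction, expanding $\exp(-x^2/(4k))$ around $k=t$ to second order in $(k-t)/\sqrt{t}$, and invoking the local CLT for the Poisson weights $e^{-t}t^k/k!$; these combine to give $v(x,t)=(4\pi t)^{-d/2}e^{-x^2/(4t)}(1+o(t^{-1/4}))$.

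The principal obstacle, I expect, is uniformity of the saddle-point approximation across the four regimes, since the curvature $g''(k^*)$ and the effective width of the Gaussian summation window change substantially as $\xi_*$ moves from values close to $1$ through finite values $>1$ to values tending to infinity. In regime (4) one must verify that the polynomial-in-$t$ prefactor accumulated from Laplace never catches up with the main exponent $|x|\sqrt{\ln(|x|/t)}$, while the sharp $o(t^{-1/4})$ in part (1) requires tracking the second-order Stirling correction and the quadratic Taylor expansion of $x^2/(4k)$ simultaneously.
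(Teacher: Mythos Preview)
Your approach to parts (2)--(4) is essentially the paper's: write $v$ as the Poisson-weighted series with the explicit Gaussian convolutions $a^{*k}(x)=(4\pi k)^{-d/2}e^{-x^2/(4k)}$, apply Stirling, and locate the maximum of $S(z,t)=z\ln(t/z)+z-x^2/(4z)-c(d)\ln z$ via the substitution $z=t\xi$, leading to the equation $\xi^2\ln\xi=(|x|/2t)^2$. The paper's computations of $\hat\xi$, $S(\hat z,t)$, and the properties of $\Phi_G$ match yours, and the upper bound is obtained by splitting the sum at a cutoff ($n_0=3t$ in regime (2), $n_0=|x|$ in regime (4)) beyond which the tail is geometrically dominated---this is the truncation you allude to but do not spell out when you write ``$\exp(g(k))\le\exp(g(k^*))$ summed termwise''.

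The genuine difference is part (1). The paper does \emph{not} push the saddle-point analysis of the series to extract the sharp prefactor and the $o(t^{-1/4})$ error; instead it uses the Fourier representation
\[
v(x,t)=\int_{\mathbb R^d}e^{ixp}\bigl(e^{-t(1-\hat a(p))}-e^{-t}\bigr)\,dp,
\]
splits the integral at $|p|=\sqrt{\ln 2t}$, and on the inner region Taylor-expands $1-\hat a(p)=p^2-p^4f(p)$ with a further cutoff at $|p|=t^{-1/3}$. This delivers the Gaussian with the stated error in a couple of lines. Your proposed route---refined Stirling with the $1/(12k)$ term, second-order Taylor of $x^2/(4k)$ about $k=t$, and a local CLT for the Poisson weights---is workable in principle but noticeably heavier for the same conclusion; in particular, controlling the interplay of the three expansions uniformly over $|x|\le rt^{1/2}$ to reach $o(t^{-1/4})$ requires more bookkeeping than the Fourier argument.
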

\begin{figure}[h!]
\centerline{\includegraphics [width=7in]{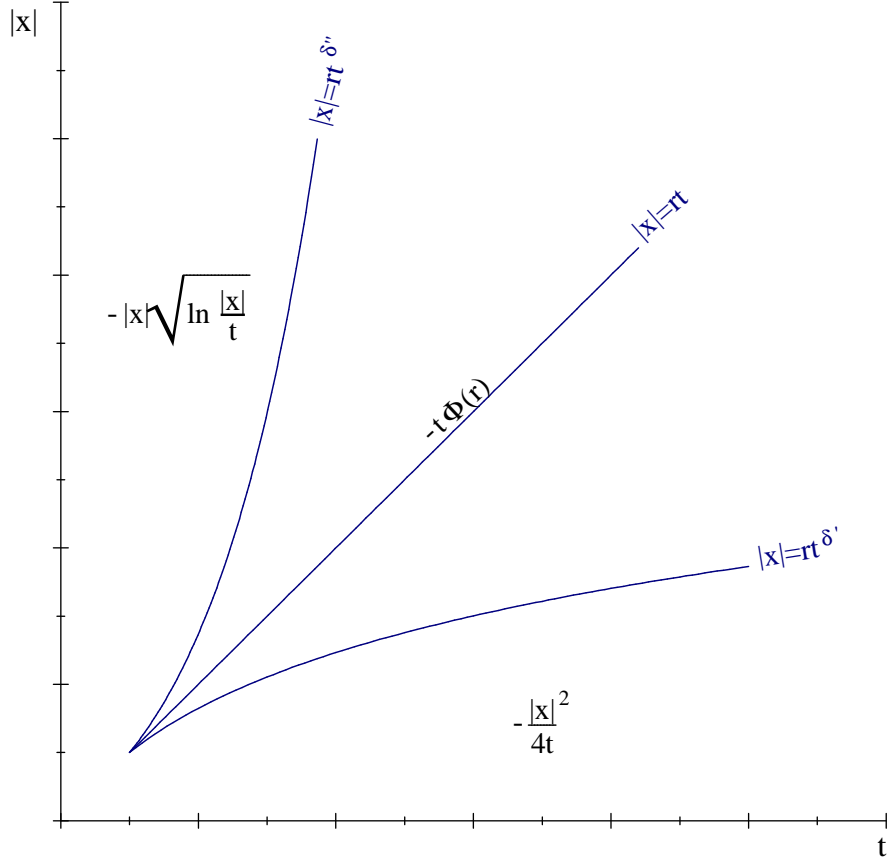}}
\caption{The large time behaviour of the function $\ln v(x,t)$ depends crucially on whether
$|x|\ll t$ (under the lower curve), or $|x|\sim t$ (the middle curve), or  $|x|\gg t$ (over the upper curve).
Here $\delta'<1$ and $\delta''>1$.}
%\label{BPfig}
\label{f_gauss}
\end{figure}
\begin{corollary}
 For any $r>0$, if $|x| = r t^{\frac{1+\delta}{2}}$ with $\delta>1$, then it follows from \eqref{GaussSLD} that
$$
\frac{\ln v(x,t)}{t^{\frac{\delta+1}{2}} \sqrt{\ln t}} \to - \tilde c(\delta, r), \quad \mbox{with} \quad \tilde c(\delta, r)= r \sqrt{\frac{\delta-1}{2}}.
$$
\end{corollary}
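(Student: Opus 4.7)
The corollary is a direct asymptotic substitution into part~4 of Theorem~\ref{Gauss}. The plan is first to check that the hypothesis of \eqref{GaussSLD} is satisfied under the scaling $|x|=rt^{(1+\delta)/2}$, and then to expand the denominator $|x|\sqrt{\ln(|x|/t)}$ in powers of $\ln t$.

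For the first step, fix $r>0$ and $\delta>1$, and choose any auxiliary exponent $\delta'$ with $1<\delta'<\delta$. Since $rt^{(1+\delta)/2}/t^{(1+\delta')/2}=r\,t^{(\delta-\delta')/2}\to\infty$, for all sufficiently large $t$ we have $|x|>t^{(1+\delta')/2}$, so the hypothesis of part~4 of Theorem~\ref{Gauss} is met and \eqref{GaussSLD} gives
$$\ln v(x,t) = -\,|x|\sqrt{\ln(|x|/t)}\,\bigl(1+o(1)\bigr).$$

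For the second step, use $|x|/t = rt^{(\delta-1)/2}$ to compute
$$\ln(|x|/t) = \tfrac{\delta-1}{2}\ln t + \ln r = \tfrac{\delta-1}{2}\ln t\,\bigl(1+o(1)\bigr),$$
the point being that $\delta>1$ makes $\ln t\to\infty$ dominate the additive constant $\ln r$. Taking a square root and multiplying by $|x|=rt^{(1+\delta)/2}$ yields
$$|x|\sqrt{\ln(|x|/t)} \;=\; r\sqrt{\tfrac{\delta-1}{2}}\,t^{(1+\delta)/2}\sqrt{\ln t}\,\bigl(1+o(1)\bigr),$$
and dividing the preceding display for $\ln v(x,t)$ by $t^{(1+\delta)/2}\sqrt{\ln t}$ gives the stated limit with $\tilde c(\delta,r)=r\sqrt{(\delta-1)/2}$.

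There is no substantive obstacle; the only point requiring a moment of attention is the absorption of the constant $\ln r$ into the $o(1)$ correction, which uses that $\delta>1$ strictly (for $\delta=1$ this absorption would fail and the correct normalisation of $\ln v$ would differ). Everything else is just algebraic simplification of the output of Theorem~\ref{Gauss}\,(4).
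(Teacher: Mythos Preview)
Your proof is correct and matches the paper's intended derivation: the corollary is stated there as an immediate consequence of \eqref{GaussSLD} without a separate argument, and your two-step substitution (verify the hypothesis via an auxiliary $\delta'\in(1,\delta)$, then expand $|x|\sqrt{\ln(|x|/t)}$) is precisely the computation the paper leaves to the reader. Your remark about the role of $\delta>1$ in absorbing $\ln r$ is apt and is the only point worth noting.
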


\begin{remark}\label{rem_2} {\rm In the case $\delta>1$
the function $
\frac{x^2}{4 t} \ = \frac{r^2}{4} t^\delta$  exhibits the faster polynomial growth at infinity  than the function $
t^{\frac{\delta+1}{2}} \sqrt{ \ln t}$.
Consequently,  in this region the nonlocal heat kernel $v(x,t)$  has a more ''fat''\ tail
$v(x,t) \sim e^{ - \tilde c(\delta,r) t^{\frac{\delta+1}{2}} \sqrt{ \ln t}}$ than the classical heat kernel $w(x,t) = \frac{1}{(4 \pi)^{d/2} t^{d/2}} e^{ -\frac{x^2}{4t}} \sim e^{- \frac{r^2}{4} t^\delta}$.
}
\end{remark}

In the next sections we prove all statements of Theorem \ref{Gauss}.

\subsection{Asymptotics in the case $|x| \le r t^{\frac12}$}

The asymptotics \eqref{GaussSD} follows from the local limit theorem for a general probability distribution that satisfies \eqref{a1} - \eqref{a2}. To justify the estimate for the reminder  in \eqref{GaussSD} we give a short analytic proof based on the following representation for $v(x,t)$
$$
v(x,t) \ = \  \int_{\mathbb R^d} e^{ixp} \Big( e^{-t(1-\hat a(p))} - e^{-t} \Big) dp.
$$
This integral can be rewritten as the following sum:
\begin{equation}\label{vsum}
v(x,t) \ = \ \int\limits_{|p| < \sqrt{ \ln 2t}} e^{ixp} e^{-t(1-\hat a(p))} dp  - e^{-t} \int\limits_{|p| < \sqrt{ \ln 2t}} e^{ixp} dp +
\int\limits_{|p| > \sqrt{ \ln 2t} } e^{ixp} \Big( e^{-t(1-\hat a(p))}  - e^{-t} \Big) dp.
\end{equation}
%with $\kappa(t) = \sqrt{ \ln 2t}$.
%\begin{equation}\label{kappa}
%t \ \hat a(p)  < \delta \quad \mbox{ for all } \; (t,p) \quad \mbox{such that } \; |p|> \kappa(t).
%\end{equation}
%The function $\kappa(t)$  depends on the rate of decay of $\hat a(p)$ as $|p| \to \infty$.

%Let us estimate each integral in \eqref{vsum} separately. In the case of the Gaussian convolution kernel
%setting $\kappa(t) = \sqrt{\ln
%t/\delta}$ we obtain  for

The second and the third integral in \eqref{vsum} can be estimated from above by $O(e^{-t} (\ln t)^{\frac{d}{2}})$ and $o(e^{-t} t)$ correspondingly.
%\begin{equation}\label{I2}
%e^{-t} \ \left| \ \int\limits_{|p| < \kappa(t)} e^{ixp} dp \ \right| \ \le \  O(e^{-t} \kappa(t)^d) \ = \ O(e^{-t} (\ln
%t)^{\frac{d}{2}}),
%\end{equation}
%and
%\begin{equation}\label{I3}
%\left| \int\limits_{|p| > \kappa(t) } e^{ixp} e^{-t} \Big( e^{t \hat a(p)}  - 1 \Big) dp \right| \ \le \ C  e^{-t} t \int\limits_{|p|
%> \kappa(t) } |\hat a(p)| dp \ = \ o(e^{-t} t), \quad t \to\infty;
%\end{equation}
%here we have used the inequality $|e^{t \hat a(p)}-1| \le e^\delta t |\hat a(p)|$ and the fact that  $\hat  a(p) \in L^1(\mathbb R^d)$.
Denoting  $1-\hat a(p)=p^2-p^4 f(p)$  and taking into account the relation $p^4f(p)=O(-t^{\frac43})$ valid for $|p|<t^{-\frac13}$,  for the first integral in \eqref{vsum}  we get
%\begin{equation}\label{p4}
%1- \hat a(p) = 1 - e^{-p^2} = p^2 - p^4 \sum_{k=2}^{\infty}\frac{(-1)^k \ p^{2k-4}}{k!} = p^2 - p^4 f(p)
%\end{equation}
%and represent this integral as the sum:
\begin{equation}\label{I1bis}
\int\limits_{|p| < \sqrt{\ln 2t}} e^{ixp} e^{-t(1-\hat a(p))} dp =  \int\limits_{|p|<t^{- \frac13}} e^{ixp}  e^{-t (p^2- p^4
f(p) )} dp + \int\limits_{ t^{- \frac13} < |p| < \sqrt{\ln 2 t}} e^{ixp} e^{-t(1-\hat a(p))} dp
\end{equation}
$$
= \
\frac{1}{(4 \pi)^{d/2} \ t^{\frac{d}{2}}} \ e^{-\frac{x^2}{4t}} \left( 1 +  o (t^{-\frac14}) \right) + o(e^{-t^\frac14}).
$$
This yields \eqref{GaussSD}.

\subsection{The case  $|x| \ = \ r \ t^{\frac{1+\delta}{2}}, \  0<\delta \le 1$}

In this region we exploit the first  representation for $v(x,t)$ in  (\ref{v}).
Since
\begin{equation}\label{aastk}
a^{\ast k} (x) \ = \ c_k \ e^{-\frac{x^2}{4 k}} \quad \mbox{ with } \; c_k = \frac{\tilde c}{k^{d/2}}, \; \tilde c >0,
\end{equation}
then using Stirling's approximation we get %for large $k$:
\begin{equation}\label{term}
 \frac{ t^k  a^{\ast k} (x)}{k!} \
% \sim
= \ \exp \left\{ k \ln t - k \ln k + k - \frac{x^2}{4 k} - c(d) \ln k +r_k\right\}
\end{equation}
with a constant $c(d) = \frac{d+1}{2}$ and $|r_k|\leq C$. %depending only on $d$.
Let us estimate the maximal term in the sum
\begin{equation}\label{v-sum}
\sum_{k=1}^{\infty} \frac{ t^k  a^{\ast k} (x)}{k!}.
\end{equation}
%using (\ref{term}).
To this end we introduce a function
\begin{equation}\label{S}
S(z,t) \ = \ \big(z \ln t - z \ln z + z - \frac{x^2}{4 z} - c(d) \ln z\big)\Big|_{x^2=rt^{1+\delta}}, \quad z > 0,
\end{equation}
and locate $\max\limits_{z>0} S(z,t)$ in $z$ for each $t>0$. % under the condition $x^2 = r^2 t^{1+\delta}, \ \delta>0$.
Since for each positive $t$ the function $S(z,t)$ tends to $ - \infty$ both as $z\to 0 $ and as $z \to \infty$,  it attains its maximum
on $(0,+\infty)$. Denote
$$
\hat z \ = \ \hat z(t) \ = \ argmax_{z>0} S(z,t).
$$

\begin{proposition}\label{P1}
Let $\delta>0$, then $\hat z(t) \ = \ t \hat \xi (t)$, where $\hat \xi = \hat \xi(t) $ is the solution of equation
\begin{equation}\label{xi_bis}
\frac{4}{r^2} \xi^2 \ln \xi + \frac{4 c(d)}{r^2 t} \xi \ = \ t^{\delta-1}.
\end{equation}
Moreover, $\hat \xi (r,t) =  \xi_r(t) (1+o(1)), \; t \to \infty, $ where $\xi_r (t)$ is the solution of equation
\begin{equation}\label{xi-main}
\frac{4}{r^2} \xi^2 \ln \xi  \ = \ t^{\delta-1}.
\end{equation}
\end{proposition}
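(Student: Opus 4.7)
The plan is to locate $\hat z(t)$ from the first-order condition $\partial_z S(z,t)=0$, convert it to an equation in $\xi:=z/t$ matching \eqref{xi_bis}, and then treat \eqref{xi-main} as its leading-order approximation via a perturbation argument that exploits $\delta>0$.

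First I would differentiate \eqref{S} to obtain
$$
\partial_z S(z,t) \;=\; \ln\frac{t}{z} \;+\; \frac{x^2}{4z^2} \;-\; \frac{c(d)}{z},
$$
and observe that $S(z,t)\to-\infty$ both as $z\to 0^+$ (via $-x^2/(4z)$) and as $z\to+\infty$ (via the dominant term $z\ln(t/z)$), so a maximizer $\hat z\in(0,\infty)$ exists and satisfies $\partial_z S=0$. Substituting $z=t\xi$ and $x^2=r^2 t^{1+\delta}$ into the critical equation and multiplying by $4\xi^2/r^2$ produces \eqref{xi_bis} directly. For $\xi\le 1$ the left-hand side of \eqref{xi_bis} is at most $4c(d)/(r^2 t)$, which for $\delta>0$ is smaller than the right-hand side $t^{\delta-1}$ once $t$ is large; and for $\xi>1$ the left-hand side is strictly increasing in $\xi$. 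Hence $\hat\xi$ is the unique root, and it lies in $(1,\infty)$.

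Next I would compare $\hat\xi$ with $\xi_r$ by perturbation. Writing $\hat\xi=\xi_r(1-\eps)$ with $\eps\ge 0$, setting $\Psi_0(\xi):=\frac{4}{r^2}\xi^2\ln\xi$, and using $\Psi_0(\xi_r)=t^{\delta-1}$, a first-order Taylor expansion of \eqref{xi_bis} around $\xi_r$ gives, after dividing through by $\xi_r$,
$$
\eps\,\Psi_0'(\xi_r) \;=\; \frac{4c(d)}{r^2\,t}\,(1-\eps) \;+\; O(\eps^2),
$$
so, recalling $\Psi_0'(\xi_r)=\frac{4\xi_r}{r^2}(2\ln\xi_r+1)$,
$$
\eps \;=\; \frac{c(d)}{t\,\xi_r\,(2\ln\xi_r+1)}\,\bigl(1+o(1)\bigr).
$$

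Finally I would verify $\eps\to 0$ in each of the three regimes, using the identity $\xi_r^2\ln\xi_r=(r^2/4)t^{\delta-1}$: for $0<\delta<1$ one has $\xi_r\to 1^+$ and the denominator is $\Theta(1)$, giving $\eps=O(1/t)$; for $\delta=1$, $\xi_r$ is a fixed constant greater than $1$ and again $\eps=O(1/t)$; and for $\delta>1$, $\xi_r\ln\xi_r\to\infty$ so the denominator diverges. The step I expect to be the main obstacle is precisely this uniform bookkeeping across regimes: the assumption $\delta>0$ is exactly what makes the perturbation $4c(d)\xi/(r^2 t)$ asymptotically negligible relative to the main balance $\xi^2\ln\xi\sim(r^2/4)t^{\delta-1}$, and without it the reduction of \eqref{xi_bis} to \eqref{xi-main} would fail.
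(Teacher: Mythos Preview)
Your derivation of \eqref{xi_bis} from the first-order condition $\partial_z S=0$ and the uniqueness argument via monotonicity on $(1,\infty)$ match the paper exactly.

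For the approximation $\hat\xi=\xi_r(1+o(1))$, the paper takes a shorter route than your Taylor expansion: it simply observes that the perturbing term $\frac{4c(d)}{r^2 t}\xi$ is $o(\xi^2\ln\xi)$ whenever $\xi\to\infty$ (with $t\ge1$), and is $o(t^{\delta-1})$ whenever $\xi$ stays bounded (with $t\to\infty$). Since any putative solution must fall into one of these two cases, the extra term is negligible relative to the main balance, and $\hat\xi$ agrees with $\xi_r$ to leading order. No expansion or case-by-case computation of $\eps$ is needed.

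Your perturbation argument gives sharper quantitative information (an explicit rate for $\eps$), but as written it is mildly circular: the expansion $\Psi_0(\xi_r(1-\eps))=\Psi_0(\xi_r)-\eps\xi_r\Psi_0'(\xi_r)+O(\eps^2)$ presupposes that $\eps$ is small in order to control the remainder, which is precisely what you are trying to establish. Moreover, the remainder is really $\tfrac12\eps^2\xi_r\,\Psi_0''(\zeta)$ after dividing by $\xi_r$, and for $\delta>1$ this is $O(\eps^2\xi_r\ln\xi_r)$, not $O(\eps^2)$ uniformly. A clean repair is to replace the Taylor step by the mean value theorem: from $\Psi_0(\xi_r)-\Psi_0(\hat\xi)=(\xi_r-\hat\xi)\Psi_0'(\zeta)$ with $\zeta\in(\hat\xi,\xi_r)\subset(1,\infty)$ and $\Psi_0'(\zeta)\ge \tfrac{4}{r^2}$, together with $\Psi_0(\xi_r)-\Psi_0(\hat\xi)=\tfrac{4c(d)}{r^2 t}\hat\xi\le\tfrac{4c(d)}{r^2 t}\xi_r$, one gets directly $\xi_r-\hat\xi\le \tfrac{c(d)}{t}\xi_r$, hence $\eps\le c(d)/t\to0$ in all regimes. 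Once $\eps\to0$ is established this way, your Taylor formula for $\eps$ becomes legitimate as a second step.
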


\begin{proof}
The  maximum point of  $S(z,t)$ is defined by the equation
\begin{equation}\label{xiz}
\frac{\partial}{\partial z} S(z,t) \ = \ \ln t - \ln z + \frac{x^2}{4 z^2} - \frac{c(d)}{z} \ = \ 0.
\end{equation}
Making the change of variables $z = t \xi$, we rewrite \eqref{xiz} as \eqref{xi_bis}.
Denote a solution of this equation by $\hat\xi (r, t)$. In what follows if it does not lead to ambiguity we drop the arguments of the function $\hat\xi (r, t)$. Observe that $\hat\xi > 1$ for sufficiently large $t$. Indeed, for $\xi\in(0,1]$
we have $\xi^2\ln\xi\leq 0$, and $\frac{\xi}{t} = o\big(t^{\delta-1}\big)$ as $t\to\infty$. This yields the required
inequality.

Notice that the function on the left-hand side of \eqref{xi_bis} is increasing as $\xi \in (1, +\infty)$ and therefore, equation  \eqref{xi_bis} has a
unique solution for large $t$.
It is easy to see that
$$
\frac{4 c(d)}{r^2 t} \xi=o\Big(\xi^2 \ln \xi\Big)\qquad\hbox{if }\xi\to\infty \; \hbox{and } \; t\geq 1,
$$
and
$$
\frac{4 c(d)}{r^2 t} \xi=o(t^{\delta-1}) \quad \mbox{ as } \; t\to\infty \; \hbox{and } \; \xi \; \hbox{is bounded. }
$$
Consequently, the solution $\hat\xi(r,t)$ of \eqref{xi_bis} can be approximated for large $t$ by the solution $\xi_r(t)$ of equation \eqref{xi-main}.
\end{proof}
\medskip

We consider separately the following cases: $0<\delta<1$ and $ \delta=1$. \\[2mm]
In the case $0<\delta<1$ we have  $ t^{\delta-1} \to 0$ as $t \to \infty$ and, therefore,
the solution of \eqref{xi-main} converges to $1$. The Taylor expansion of $\frac4{r^2} \xi^2 \ln \xi$ about
$1$ reads
$$
\frac4{r^2} \xi^2 \ln \xi=\frac4{r^2} (\xi - 1)+\frac6{r^2} (\xi - 1)^2+O( (\xi - 1)^3), \quad \xi-1\to0.
$$
Combining this expansion with \eqref{xi_bis} we obtain
$$
\frac4{r^2} (\xi - 1)+\frac6{r^2} (\xi - 1)^2=t^{\delta-1}-\frac{4 c(d)}{r^2 t}+
O((\xi-1)^3+(\xi-1)t^{-1}).
$$
The straightforward computations yield
$$
\hat\xi (r,t) \ = \ 1+ \frac{r^2}{4} t^{\delta-1}-\Big(\frac{3r^2}8t^{2(\delta-1)}+
 \frac{c(d)}t\Big)+ {\rm o}\big(\max\{t^{2(\delta-1)}, t^{-1}\}\big)
$$
and
$$
\hat z \ = \ t \hat\xi \ = \ t + \frac{
r^2}{4}  t^\delta -\Big(\frac{3r^2}8t^{2\delta-1}+
 c(d)\Big)+ {\rm o}\big(\max\{t^{2\delta-1}, 1\}\big).
$$
Substituting this expression for $\hat z$ in (\ref{S}) and considering the relation $x^2=r^2 t^{1+\delta}$ we get
\begin{equation}\label{maxS_bis}
S(\hat z, t) \  = \ t - \frac{r^2}{4} t^\delta + \frac{r^4}{16}t^{2\delta-1}
-c(d)\ln t+o\big(\max\{t^{2\delta-1},\ln t\}\big).
\end{equation}
Now from (\ref{v}) and (\ref{term}), taking into account the fact that $a^{\ast k}(x)>0$ for all $k$ and $x$, we obtain the following
estimate of $v(x,t)$ from below:
\begin{equation}\label{v1b}
v(x,t) \ = \ e^{-t}  \sum_{k=1}^{\infty} \frac{ t^k  a^{\ast k} (x)}{k!}  \ \ge \   e^{-t+ S(\hat z,t)}  \ = \  e^{- \frac{r^2}{4}
t^{\delta}+ \frac{r^4}{16}t^{2\delta-1}
-c(d)\ln t+o(\max\{t^{2\delta-1},\,\ln t\})},\quad\hbox{as }t\to\infty.
\end{equation}

To get an upper bound on $v(x,t)$ we divide the sum in \eqref{v} into two parts, in the first sum the summation index varies from $1$
to $n_0$ where $n_0$ is chosen in such a way that
\begin{equation}\label{n}
\frac{t^{n+1} e^{-\frac{x^2}{n+1}}}{(n+1)!} \ \frac{n!}{t^{n} e^{-\frac{x^2}{n}}} \ = \  \frac{t}{n+1} \ e^{\frac{x^2}{n(n+1)}} \ < \
\frac12 \quad \mbox{for all } \; n \ge n_0.
\end{equation}
Using the relation  $x^2 = r^2 t^{1+\delta}$ and the fact that $f(u) = u e^{cu^2}$ is an increasing  function for any $c>0$, we have
$$
 \frac{t}{n+1} \ e^{\frac{x^2}{n(n+1)}} < \frac13 e^{\frac{r^2}{9} t^{\delta-1}} < \frac12  \quad \mbox{for all } \; n > n_0=[3t] \;
 \mbox{ and } \; t \geq \Big(\frac {r^2}{9\ln(3/2)}\Big)^{\frac1{1-\delta}}.
$$
This implies that
\begin{equation}\label{g3t}
\sum_{n > 3t} \frac{t^n \ a^{\ast n}(x)}{n!} \ \le \ e^{S(\hat z, t)} \ = \  e^{t - \frac{c}{4}  t^\delta +
\frac{c^2}{16}t^{2\delta-1}
-c(d)\ln t+o(\max\{t^{2\delta-1},\,\ln t\})}, \quad t \to \infty.
\end{equation}
Due to \eqref{term} and \eqref{S} the upper bound for the sum $\sum_{n =1}^{[3t]} \frac{t^n \ a^{\ast n}(x)}{n!}$ reads
\begin{equation}\label{l3t}
\sum_{n =1}^{[3t]} \frac{t^n \ a^{\ast n}(x)}{n!}\leq3Ct e^{S(\hat z,t)}=
 e^{t - \frac{r^2}{4}  t^\delta + \frac{r^4}{16}t^{2\delta-1}
-(c(d)-1)\ln t+o(\max\{t^{2\delta-1},\,\ln t\})}, \quad t \to \infty.
\end{equation}
%we can use the estimate from below on $n!$: $n! > n^n e^{-n}, \; n=1,2, \ldots$. Then
%$$
%\frac{t^n \ a^{\ast n}(x)}{n!} \ \le \ e^{\tilde S(t,n)} \quad \mbox{with} \quad \tilde S(t,n) =
%n \ln t - n \ln n + n - \frac{x^2}{4 n} - \tilde c(d) \ln n.
%$$
%Comparing $\tilde S(t,n)$ with (\ref{S}) we conclude that for large enough $t$ any term in
%the sum (\ref{l3t}) can be estimated from above:
%\begin{equation}\label{B1}
%\frac{t^n \ a^{\ast n}(x)}{n!} \ \le \ e^{S(\hat z, t)},
%\end{equation}
%where $S(\hat z, t) = \max_{z>0} S(z,t)$ is defined by (\ref{maxS_bis}).
From (\ref{g3t}) and \eqref{l3t}
we derive the estimate of $ v(x,t)$ from above as $t \to \infty$:
\begin{equation}\label{v1a}
\begin{array}{c}
\displaystyle
v(x,t) \ =  \ e^{-t}  \sum_{k=1}^{[3t]} \frac{ t^k  a^{\ast k} (x)}{k!} +  e^{-t}  \sum_{k > 3t} \frac{ t^k  a^{\ast k} (x)}{k!}
\\[5mm]
\displaystyle
 \le \  e^{- \frac{r^2}{4} t^{\delta} + \frac{r^4}{16}t^{2\delta-1}
-(c(d)-1)\ln t+o(\max\{t^{2\delta-1},\,\ln t\})}.
\end{array}
\end{equation}
Finally from \eqref{v1b} and \eqref{v1a} we get \eqref{GaussMD}.

\begin{remark}\label{rem_1} {\rm Since $\frac{x^2}{4 t} = \frac{r^2 t^\delta}{4}, \; 0<\delta<1$,  the logarithmic
asymptotics of $v(x,t)$ coincides with that for the classic heat kernel:
\begin{equation}\label{R1}
\frac{\ln v(x,t)}{\frac{x^2}{4 t}} \ \to \ -1 \quad \mbox{ as } \quad t\to \infty.
\end{equation}
Moreover, for $\delta\leq \frac12$ estimates \eqref{v1b} and \eqref{v1a} take the form
\begin{equation}\label{f_rem1}
C_1\, t^{\frac{1-d}2} e^{- \frac{r^2}{4} t^{\delta}
}\leq
v(x,t) \le \ C_2\,t^{\frac{1-d}2} e^{- \frac{r^2}{4} t^{\delta} }
\end{equation}
with $C_1,\,C_2>0$.
For $\delta\in(\frac12,1)$ estimates \eqref{v1b} and \eqref{v1a} imply that
\begin{equation}\label{f_rem1_2}
v(x,t)
 = \  e^{- \frac{r^2}{4} t^{\delta} + \frac{r^4}{16}t^{2\delta-1}
+o(t^{2\delta-1})}.
\end{equation}
}
\end{remark}

% Proof of the Corollary. We turn to the case $\delta>1$.
%In this case the straightforward computations give the following formula
%for the solution of (\ref{xi-main}) and \eqref{xi_bis}:
%$$
%\hat\xi \ = \ \frac12 \  \sqrt{\frac{2 r^2}{\delta-1}} \ \frac{t^{\frac{\delta-1}{2}}}{\sqrt{ \ln t}} (1+ o(1)), \quad t \to \infty.
%$$
%{\color{red} To add some explanations and the details concerning $o(1)\sim \ln(\ln t)/\ln t$}\\
%Therefore,
%$$
%\hat z \ = \ t \ \hat\xi \ = \  \frac12 \ \sqrt{\frac{2 r^2}{\delta-1}} \ \frac{t^{\frac{\delta+1}{2}}}{\sqrt{ \ln t}} (1+ o(1)), \quad t\to \infty.
%$$
%In view of (\ref{S}) for large  $t$ this yields
%$$
%S(\hat z, t) =-\tilde c(\delta,r) \,t^{\frac{\delta+1}{2}} \sqrt{ \ln t}\, (1+ o(1)).
%$$
%One can easily check that in the case under consideration inequality (\ref{n}) holds for all $n \ge n_0 = t^{\frac{\delta+1}{2}}$.
%Using the similar reasoning as above we conclude that for large $t$:
%\begin{equation}\label{v2}
%e^{-t} \ e^{-\tilde c(\delta) t^{\frac{\delta+1}{2}} \sqrt{ \ln t} \ (1+ o(1)) } \ \le \  v(x,t) \ \le \ e^{-t} \
%t^{\frac{\delta+1}{2}} \   e^{-\tilde c(\delta) t^{\frac{\delta+1}{2}} \sqrt{ \ln t} \ (1+ o(1))},
%\end{equation}
%that is
%$$
%v(x,t)= e^{-t} \ e^{-\tilde c(\delta) t^{\frac{\delta+1}{2}} \sqrt{ \ln t} \ (1+ o(1)) }.
%$$
%Therefore,
%\begin{equation}\label{R2}
%\frac{\ln v(x.t)}{t^{\frac{\delta+1}{2}} \sqrt{ \ln t}} \ \to \ -\tilde c(\delta,r), \quad \mbox{ as } \; t \to \infty.
%\end{equation}

\bigskip
We proceed with the case $\delta=1$. In this case equation \eqref{xi-main} reads
\begin{equation}\label{c2}
\xi^2 \ln \xi  \ = \ \frac{r^2}{4}.
\end{equation}
It is easy to check that equation \eqref{c2} has a unique solution   $\xi_r \in (1, \infty)$. Then for solution  $\hat\xi (r, t)$ of
 \eqref{xi_bis} by the implicit function theorem it follows that $\hat\xi (r, t) = \xi_r + O(t^ {-1})$. Therefore,
$$
S(\hat z, t) \ = \ t \xi_r \ln t - t \xi_r (\ln t + \ln \xi_r) + t \xi_r - t \xi_r \ln \xi_r
+O(\ln t)\ = \ t(\xi_r - 2 \xi_r \ln \xi_r)+O(\ln t), \; \hat z = t \hat \xi,
$$
and using the same arguments as above we have
\begin{equation}\label{v3}
  v(x,t) \ = \ e^{-t(1+ 2 \xi_r \ln \xi_r - \xi_r) + O(\ln t) }, \quad t \to \infty,
\end{equation}
where $\xi_r>1$ is the solution of (\ref{c2}). Thus the logarithmic asymptotics of $v(x,t)$
is given by
\begin{equation}\label{R3}
\frac{\ln v(x,t)}{t} \ \to \ - \Phi_G (r), \quad \mbox{ as } \; t \to \infty,
\end{equation}
where $\Phi_G (r) = \Phi_G (\xi_r) = 1 + 2 \xi_r \ln \xi_r - \xi_r$.

\begin{lemma}
For any $r>0$
\begin{equation}\label{R4}
  0 \ < \ \Phi_G (r) \ < \ \frac{r^2}{4}.
\end{equation}
Moreover,
\begin{equation}\label{predely}
\begin{array}{c}
\displaystyle
\Phi_G (r) = \frac14 r^2 (1+o(1)), \quad \mbox{\rm as } \; r \to 0, \\[3mm]
\displaystyle
\Phi_G (r) = r \sqrt{\ln r} (1+o(1)) \ \ \ \hbox{\rm as } r \to \infty.
\end{array}
\end{equation}
\end{lemma}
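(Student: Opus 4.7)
The plan is to first pin down the defining equation \eqref{c2}, then establish the double inequality \eqref{R4} via two elementary one-variable inequalities, and finally invert \eqref{c2} at the endpoints $r\to0^+$ and $r\to\infty$ to read off both asymptotics in \eqref{predely}.

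For the setup, I would note that $h(\xi):=\xi^{2}\ln\xi$ has $h'(\xi)=\xi(2\ln\xi+1)>0$ on $(1,\infty)$ with $h(1)=0$ and $h(\infty)=\infty$, so \eqref{c2} has a unique solution $\xi_r\in(1,\infty)$ for each $r>0$, continuous and strictly increasing in $r$, with $\xi_r\to 1$ as $r\to 0^+$ and $\xi_r\to\infty$ as $r\to\infty$. For \eqref{R4} I would introduce the auxiliary functions
\begin{equation*}
\psi(\xi):=1+2\xi\ln\xi-\xi,\qquad \phi(\xi):=\xi^{2}\ln\xi-\psi(\xi)=(\xi^{2}-2\xi)\ln\xi+\xi-1,
\end{equation*}
which both vanish at $\xi=1$, and compute $\psi'(\xi)=1+2\ln\xi$ and $\phi'(\xi)=(\xi-1)(2\ln\xi+1)$, both strictly positive on $(1,\infty)$. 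Hence $\psi(\xi_r)>0$ and $\phi(\xi_r)>0$, which together with the identity $\xi_r^{2}\ln\xi_r=r^{2}/4$ translates exactly into $0<\Phi_G(r)<r^{2}/4$.

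For the small-$r$ asymptotics I would set $u:=\xi_r-1$, which tends to $0$ by the monotonicity in Step~1. Expanding $(1+u)^{2}\ln(1+u)=u+\tfrac{3}{2}u^{2}+O(u^{3})$ and matching with $r^{2}/4$ gives $u=\tfrac{r^{2}}{4}+O(r^{4})$; substituting this into the Taylor expansion $\Phi_G=1+2(1+u)\ln(1+u)-(1+u)=u+u^{2}+O(u^{3})$ yields $\Phi_G(r)=\tfrac{r^{2}}{4}(1+o(1))$.

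The main obstacle is the large-$r$ asymptotics, because the inversion of \eqref{c2} is transcendental and requires careful bookkeeping of logarithmic errors. Taking logarithms in \eqref{c2} gives
\begin{equation*}
2\ln\xi_r+\ln\ln\xi_r=2\ln r-2\ln 2.
\end{equation*}
From \eqref{c2} I would first extract the crude bounds $\xi_r\le r/2$ (using $\ln\xi_r\ge 1$ for large $r$) and $\xi_r\ge r/(2\sqrt{\ln(r/2)})$, and then bootstrap to $\ln\xi_r=\ln r\bigl(1+O(\tfrac{\ln\ln r}{\ln r})\bigr)$, which in turn gives
\begin{equation*}
\xi_r=\frac{r}{2\sqrt{\ln\xi_r}}=\frac{r}{2\sqrt{\ln r}}\bigl(1+o(1)\bigr).
\end{equation*}
Since the terms $1$ and $\xi_r$ in $\Phi_G$ are of order $o(r\sqrt{\ln r})$ while $2\xi_r\ln\xi_r=r\sqrt{\ln r}(1+o(1))$, the definition of $\Phi_G$ yields $\Phi_G(r)=r\sqrt{\ln r}(1+o(1))$, which completes the lemma. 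Apart from this logarithmic bookkeeping, every step reduces to elementary monotonicity of smooth one-variable functions.
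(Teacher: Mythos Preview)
Your proof is correct. For the double inequality \eqref{R4} your argument is essentially identical to the paper's: both compare the function $\psi(\xi)=1+2\xi\ln\xi-\xi$ (the paper calls it $\gamma$) with $\varkappa(\xi)=\xi^{2}\ln\xi$ by noting that both vanish at $\xi=1$ and then comparing derivatives on $(1,\infty)$. Your explicit computation $\phi'(\xi)=(\xi-1)(2\ln\xi+1)$ is a minor cosmetic variation of the paper's inequality $\gamma'<\varkappa'$.

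For the asymptotics \eqref{predely} the two approaches differ. The paper does not carry out the inversion of $\xi_r^{2}\ln\xi_r=r^{2}/4$ directly; instead it observes that the Gaussian case fits into the general framework of Section~3 (with $p=2$, $b=\tfrac14$, $I(s)=\tfrac14 s^{2}$) and then reads off both limits from the general formulas \eqref{Phi0-bis2} and \eqref{Phi-inftyG1}. You, by contrast, do a self-contained elementary analysis: a Taylor expansion in $u=\xi_r-1$ for small $r$, and a logarithmic bootstrap $\ln\xi_r=\ln r\,(1+O(\tfrac{\ln\ln r}{\ln r}))$ followed by $\xi_r=\tfrac{r}{2\sqrt{\ln r}}(1+o(1))$ and $2\xi_r\ln\xi_r=r\sqrt{\ln r}\,(1+o(1))$ for large $r$. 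Your route keeps the Gaussian section logically independent of the later general theory, at the cost of repeating in a special case a computation that the paper prefers to do once in full generality.
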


\begin{proof}
Let  $\gamma(\xi)=1+2 \xi \ln \xi - \xi$.
To prove the lower bound in (\ref{R4}) we notice that $\gamma (1) = 0$ and $\frac{\partial}{\partial \xi} \gamma (\xi) \ = \ 1+2 \ln
\xi \ge 1$ (for $\xi \ge 1$). To prove that
$$
1 + 2 \xi_r \ln \xi_r - \xi_r \ < \ \frac{r^2}{4} \ = \ \xi_r^2 \ln \xi_r,
$$
we denote by $\varkappa(\xi) = \xi^2 \ln \xi$.
Then $\gamma (1) = \varkappa (1) = 0$, and $\gamma'(\xi)< \varkappa' (\xi) , \; \xi>1$. This yields the desired upper bound in (\ref{R4}).

%zzzzzzzzzzzzzzzzzzzzzzzzzzzzzzzzzzzzzzzzzzzzzzzzzzzzzzzzz
The asymptotics \eqref{predely} is a particular case of Theorem \ref{Asymptotics} describing the asymptotic behaviour of $\Phi(r)$ under the general assumptions on the kernel $a(x)$. In our case $\Phi_G(r) = \Phi(\xi_r^{-1})$, and if we take $p=2, b= \frac14$, then $I(s) = \frac14 s^2$, and we immediately obtain \eqref{predely} from \eqref{Phi-inftyG1}.

\end{proof}

\subsection{The case $|x| \ge t^{\frac{1+\delta}{2}}, \; \delta>1$}

In this subsection we consider a region in $(x,t)$-space of super-large $|x|$, where $|x|>t^{(1+\delta)/2}$ with $\delta>1$.
In this case we again begin with the description of $\max\limits_z \mathcal{S}(z,t,x)$, where %the function $\mathcal{S}(z,t,x)$ was introduced by (\ref{S}).
$$
\mathcal{S}(z,t,x) \ = \ \big(z \ln t - z \ln z + z - \frac{x^2}{4 z} - c(d) \ln z\big)\quad , \quad z > 0.
$$
Since
$t\to \infty$, we can omit the last term in (\ref{S}), and write as above the following equation on $\hat z = \hat z (t,x) = \mbox{argmax} \
S(z,t,x)$ :
\begin{equation}\label{dS}
\frac{\partial}{\partial z} \mathcal{S}(\hat z,t,x) \ = \ \ln t - \ln \hat z + \frac{x^2}{4 {\hat z}^2} \ = \ 0,
\end{equation}
or equivalently,
\begin{equation}\label{dS-1}
x^2 \ = \ 4 {\hat z}^2 ( \ln \hat z - \ln t) \ = \ 4 {\hat z}^2 \ln \frac{\hat z}{t}.
\end{equation}
Taking the logarithm on both sides  of equation (\ref{dS-1}) we obtain
$$
\ln \hat z \ = \  \ln |x| \ (1+o(1)),
$$
consequently equality (\ref{dS-1}) can be rewritten as
$$
x^2 = 4 {\hat z}^2 \ln \frac{|x|}{t} (1+ o(1)), \quad t \to \infty.
$$
Substituting $\frac{|x|}{2\sqrt{\ln \frac{|x|}{t}}}$ for $\hat z$ in (\ref{S}), we get
$$
S(\hat z, t, x) \ = \  \frac{|x|}{2 \sqrt{\ln \frac{|x|}{t}}} \ln t - \frac{|x|}{2 \sqrt{\ln \frac{|x|}{t}}} (\ln |x| - \ln 2 -
\frac12 \ln \ln \frac{|x|}{t}) + \frac{|x|}{2 \sqrt{\ln \frac{|x|}{t}}} - \frac{x^2 \sqrt{\ln \frac{|x|}{t}}}{2 |x|} + O(\ln |x|) \ =
$$
$$
 - |x| \sqrt{\ln \frac{|x|}{t}} \ (1+ o(1)),  \quad t \to \infty.
$$
Since $|x|>t^{(1+\delta)/2}$ with $\delta>1$, we can take $n_0 = |x|$ in (\ref{n})for
large enough $t$. Then as above we get the following two-sided estimate on $v(x,t)$:
\begin{equation*}\label{v4}
 %e^{- |x| \sqrt{\ln \frac{|x|}{t}} (1+ o(1)) } \le
 e^{-t} \ e^{-  |x| \sqrt{\ln \frac{|x|}{t}} (1+ o(1)) } \le  v(x,t)  \le  |x|
 e^{-t}  e^{- |x| \sqrt{\ln \frac{|x|}{t}}  (1+ o(1))}.
 %\le   e^{-  |x| \sqrt{\ln \frac{|x|}{t}}  (1+ o(1)) },
\end{equation*}
Since $t=o(|x|)$, this yields
\begin{equation}\label{Rx}
\frac{\ln v(x,t)}{ |x| \sqrt{\ln \frac{|x|}{t}}} \ \to \ - 1, \quad \mbox{ as } \; t \to \infty, \quad |x|>t^{(1+\delta)/2}, \; \delta>1.
\end{equation}

\noindent
{{\bf Conclusions.}
\begin{enumerate}
\item If $|x| \le r t^{1/2}$, then the main term of the asymptotics of $v(x,t)$ coincides with the classical heat kernel $p_t(x,0)$ defined by \eqref{lhe}.
\item If $|x| \le r t^{\frac12+\frac{\delta}{2}}, \; 0<\delta<1$, then the main term of the logarithmic asymptotics of $v(x,t)$
coincides
with that of the classical heat kernel.
\item If $|x| = r t$, then the leading term of the logarithmic asymptotics of $v(x,t)$ is a linear function $\Phi_G(r)t$. , The leading term of the logarithmic asymptotics of the classical heat kernel is also a linear function $(r^2/4)t$. However, the corresponding coefficient $\Phi_G(r)$ is strictly less than $r^2/4$ for all $r>0$. This reflects the fact that in this range of $x$ the non-local heat kernel has more heavy tail than the classical one. It should also be noted that the coefficient
    $\Phi_G(r)$ is close to $r^2/4$ for small $r$ while $\Phi_G(r)\ll r^2$ for large $r$.
%
%It is interesting to compare the large time behaviour of $v(x,t)$ with that of the local heat
%kernel. Since for $x$ and $t$ such that
%$x^2=r^2t$
%the local heat kernel $u(x,t)$ satisfies the relation
%$$
%\lim\limits_{t\to\infty}\frac{\ln u(x,t)}t=-\frac {r^2}4,
%$$
%the upper bound in (\ref{R4}) together with (\ref{R3}) implies that although both for $v$
%and $u$ the  logarithmic asymptotics are given by linear functions of $t$, the corresponding
% coefficient are not equal, and
%$\gamma(\xi_r)<r^2/4$.
%
\item If $|x| \ge r t^{1+\delta}, \; \delta>1$, then the main term of the logarithmic asymptotics of $v(x,t)$ given by \eqref{Rx}
differs
essentially from the logarithmic asymptotics of the classical heat kernel, in particular $v(x,t)$ has more heavy tail, than the classical
heat
kernel.
\end{enumerate}
}

\section{Kernels with generic light tails}

\subsection{Main results}

\setcounter{equation}{0}
In this section we consider generic non-local operators with  convolution kernels that have  light
tails at infinity.
%exhibit at least
%exponential decay at infinity.
%\subsection{Upper bounds}
More precisely, we
%obtain upper bounds for the function $v(x,t)$ defined by (\ref{v}) in the case when
assume that, in addition to \eqref{a1}--\eqref{a2},
the convolution kernel $a(x)$ satisfies for some  $p \ge 1$ the following condition
%\eqref{a1} - \eqref{a2}, and either meets the following estimates:
\begin{equation}\label{lt}
0 \le a(x) \le C_1 e^{- b |x|^p} \; ,
\end{equation}
or even a more strong condition
\begin{equation}\label{boundedsupp}
a(x) \in C_0(R^d), \quad \; \mathrm{supp} \ a(x) \subset K_\mu = \{ x \in R^d: \ |x| \le \mu \} \qquad \mbox{for some } \; \mu>0.
\end{equation}
In what follows we assume that $\mu$ is chosen in the optimal way, that is $\mu = \min \{ \tilde \mu>0: \ supp \, a \subset K_{\tilde \mu} \}$.

%In this section our approach relies on the first equality  in \eqref{v}.
Since, in contrast with the Gaussian case, here $a^{\ast k}$ do not admit an explicit formula for $k\geq 1$,  we have to obtain sharp enough estimates for these higher order convolutions.  To this end we first make use of the results on the asymptotic behaviour of distributions of  the sums of i.i.d. random variables,  such as the local  central limit theorem and the large deviations principle, and then combine these results  with analytic techniques in order to obtain the asymptotics for $v(x,t)$.

\medskip
As in the previous section, for large $t$ four different regions of $x$ are considered: \\[2mm]
1) $|x| \le r t^{1/2}(1+ o(1))$ (standard deviations region) \\
2) $|x| = r\, t^{\frac{1+\delta}{2}} (1+ o(1)), \ 0<\delta<1$ (moderate deviations region) \\
3) $|x| = r t (1+ o(1))$ ($\delta=1$) (large deviations region) \\
4) $|x| = rt^{\frac{1+\delta}{2}} (1+ o(1)), \ \delta > 1$ ("extra-large" deviations region)

\medskip\noindent
The next two theorems describe the asymptotic behaviour of  $v(x,t)$  in the regions 1,\,2, and 4.
% (Theorem \ref{LTail-1}) and the region 3 (Theorem \ref{LTail-2_1D}, Theorem \ref{LTail-2_MD}). In the latter region we
%obtain more detailed information on the rate of decay of  $\ln v(x,t)$ and for this we impose some additional technical
%assumptions on the kernels.

\begin{theorem}[The regions of standard and moderate  deviations]\label{LTail-1var2}
 Assume that $a(x)$  satisfies \eqref{a1}--\eqref{a2} and \eqref{lt}. Then for the function $v(x,t)$
 %defined by \eqref{v}
 the following asymptotic relations hold as $t \to \infty$: \\
1) if $|x| \le r t^{\frac12}$ for some $r>0$, then
\begin{equation}\label{LT1}
v(x,t) = \frac{c(\sigma)}{ t^{\frac{d}{2}}} \ e^{-\frac{(\sigma^{-1}x,x)}{2 t}} \left( 1 +  o (1) \right),
\end{equation}
%where the constant  $c(\sigma)$  depends only on the covariance $\sigma$ of the distribution $a(x)$
where $c(\sigma)=(2\pi)^{-\frac d2}|\mathrm{det(\sigma)}|^{-\frac12}$, $\sigma$ is the covariance matrix of the distribution $a(x)$;\\[1mm]
2) if $x = r t^{\frac{1+\delta}{2}} (1+ o(1))$ with  $0<\delta<1$ and $r \in R^d \backslash \{0 \}$, then
\begin{equation}\label{Tless1}
v(x,t) = e^{- \frac{(\sigma^{-1}x,x)}{2 t}(1 + o(1))} =  e^{-\frac12 (\sigma^{-1}r,r) \, t^\delta (1+ o(1))}.
\end{equation}
\end{theorem}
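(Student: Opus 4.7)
The plan is to work directly from the Fourier representation
$$v(x,t)=\frac{1}{(2\pi)^d}\int_{\mathbb{R}^d}e^{ipx}\bigl(e^{-t(1-\hat a(p))}-e^{-t}\bigr)\,dp,$$
which already appears in the Gaussian section. The additive $-e^{-t}$ contributes only $O(e^{-t})$ to $v(x,t)$ and is negligible in both regions, so the heart of the matter is a stationary-phase/saddle-point analysis of the oscillatory integral, with the scale of $|x|$ relative to $t$ controlling how far the saddle sits from the real axis.

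For part (1) the argument is a local central limit theorem obtained by direct Fourier inversion. I would split the integration domain into $|p|\le\rho$ and $|p|>\rho$ for a small but fixed $\rho>0$. On $|p|>\rho$, the hypotheses \eqref{ahat} (continuity of $\hat a$, strict maximum at the origin, vanishing at infinity) give $\hat a(p)\le 1-\eta$ for some $\eta>0$, whence this piece is bounded by $O(e^{-\eta t})$ and is negligible. On $|p|\le\rho$ the expansion $\hat a(p)=1-\tfrac12(\sigma p,p)+o(|p|^2)$ is valid, and after the rescaling $p=q/\sqrt{t}$ the integral reduces, by dominated convergence, to a Gaussian with covariance $\sigma^{-1}/t$, producing \eqref{LT1} with the correct prefactor $c(\sigma)=(2\pi)^{-d/2}|\det\sigma|^{-1/2}$.

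For part (2) with $|x|\sim rt^{(1+\delta)/2}$, $0<\delta<1$, I would shift the contour from $\mathbb{R}^d$ to $\mathbb{R}^d+i\theta^*$ where $\theta^*$ is the saddle of the phase $ip\cdot x-t(1-\hat a(p))$. Using the quadratic expansion of $\hat a$ near the origin one finds $\theta^*\approx \sigma^{-1}x/t$, of magnitude $t^{(\delta-1)/2}\to 0$. The light-tail hypothesis \eqref{lt} ensures that $\hat a$ extends holomorphically to a complex strip $\{|\mathrm{Im}\,p|<b'\}$ for some $b'>0$, which contains $\theta^*$ for all large $t$, so the contour deformation is justified. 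At the saddle the phase becomes real and equal to
$$-\theta^*\!\cdot x + t\bigl(\hat a(i\theta^*)-1\bigr)=-\frac{(\sigma^{-1}x,x)}{2t}\bigl(1+o(1)\bigr),$$
while the residual oscillatory integral along the shifted contour, treated exactly as in part (1), contributes only a polynomial factor in $t$ that is absorbed into $1+o(1)$ at the logarithmic level. This yields \eqref{Tless1}.

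The main obstacle is the contour-deformation step in part (2): one must check holomorphicity of $\hat a$ in a sufficiently wide complex strip from \eqref{lt}, locate the saddle quantitatively and control the correction $\hat a(p+i\theta^*)-\hat a(i\theta^*)+\tfrac12(\sigma p,p)$ uniformly for $|p|$ of order $t^{-1/2}$, and verify that the integrand decays uniformly along the shifted contour so that interchanging limits is permitted. A secondary route, closer in spirit to the Gaussian calculation of Section 2, is to bypass the contour shift by invoking a \emph{uniform local moderate deviation estimate} of the form $a^{\ast k}(x)=(2\pi k)^{-d/2}|\det\sigma|^{-1/2}e^{-(\sigma^{-1}x,x)/(2k)}(1+o(1))$, valid for $k\asymp t$ and $|x|=o(k)$ under \eqref{lt} (via Cramér tilting, which \eqref{lt} supports through exponential moments), and then applying Laplace's method to the series $v(x,t)=e^{-t}\sum_k t^k a^{\ast k}(x)/k!$ around its dominant index $\hat k\approx t$; this gives the same answer and reuses the machinery already developed for the Gaussian case.
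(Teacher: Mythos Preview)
Your proposal is correct, and in fact your \emph{secondary} route for part (2) is exactly what the paper does. The paper invokes the local central limit theorem from \cite{BhaRao} for part (1), then for part (2) proves precisely the estimate $a^{\ast k}(x)=e^{-\frac12(\sigma^{-1}x\cdot x)/k\,(1+o(1))}$ for $|x|^2/k\to\infty$, $|x|/k\to 0$ (Lemma \ref{deltaless1}) via Cram\'er tilting---defining the skewed density $a_{\gamma^*}(y)=a(y)e^{\gamma^*\cdot y}/\Lambda(\gamma^*)$ with mean $x^*=x/k$ and applying the local CLT to it---and then runs the Laplace/maximum-term analysis of $e^{-t}\sum_k t^k a^{\ast k}(x)/k!$ exactly as in the Gaussian section, splitting the sum at $k=t/2$ and $k=2t$.

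Your primary route, contour deformation in the Fourier integral, is a genuinely different argument. It is more direct---it bypasses the probabilistic tilting lemma and the series decomposition entirely---and it exposes the saddle $\theta^*\approx\sigma^{-1}x/t$ as the same object that, in the paper's language, is $\gamma^*(x/k)$ evaluated at $k\approx t$ (the Legendre duality $\nabla L(\gamma^*)=x^*$ is precisely the saddle condition). The cost is exactly what you flag: one must establish holomorphic extension of $\hat a$ into a strip from \eqref{lt}, control $\hat a(p+i\theta^*)$ uniformly along the shifted contour, and verify integrable decay there; none of this is automatic from \eqref{ahat}, which is stated only on $\mathbb{R}^d$. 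The paper's approach confines all analytic work to real-variable convolution estimates and reuses the Gaussian-case machinery verbatim, which is why it is preferred there. Either route delivers \eqref{Tless1}.
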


\begin{theorem}[The regions of extra-large deviations]\label{LTail-1var3}
 Assume that $a(x)$  satisfies \eqref{a1}--\eqref{a2} and \eqref{lt}. Then  for $|x| = r t^{\frac{1+\delta}{2}}(1+o(1))$ with  $ \delta>1$ and $r>0$ the following asymptotic upper bound holds:
\begin{equation}\label{Pge1}
v(x,t) \le e^{- c_p t^{\frac{\delta+1}{2}} (\ln t)^{\frac{p-1}{p}} (1+o(1))  },\qquad \hbox{as } t \to \infty,
\end{equation}
where the constant $c_p= c_p (b,r)$ depends on $b$, $r$ and $p$.
\\[3mm]
  If in addition $a(x)$ satisfies \eqref{boundedsupp}, then for $|x| = r t^{\frac{\delta+1}{2}}  (1+o(1)) $ with $\delta>1$
\begin{equation}\label{Pfinite}
v(x,t) \le e^{- \tilde c(\mu) \: t^{\frac{\delta+1}{2}} \ln t(1+ o(1))},\qquad \hbox{as } t \to \infty,
\end{equation}
where $\tilde c(\mu) = \frac{(\delta-1)r}{2\mu}$.
\end{theorem}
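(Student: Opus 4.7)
The plan is to combine the series representation $v(x,t)=e^{-t}\sum_{k\ge 1}t^k a^{\ast k}(x)/k!$ from \eqref{v} with a pointwise upper bound on $a^{\ast k}(x)$ valid in the large-deviations regime $|x|\gg k$, and then to locate the dominant index via Stirling and a saddle-point argument, in the same spirit as the Gaussian case. Two different estimates on $a^{\ast k}$ are needed: one under the exponential-type tail hypothesis \eqref{lt}, one under the stronger compact-support hypothesis \eqref{boundedsupp}.

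For \eqref{lt} with $p>1$, the moment generating function $M(\lambda):=\int e^{(\lambda,y)}a(y)\,dy$ is finite on all of $\mathbb R^d$, and Laplace's method gives $\log M(\lambda)\sim c_1(b,p)|\lambda|^{p/(p-1)}$ as $|\lambda|\to\infty$. The Legendre transform $I(r):=\sup_\lambda\bigl((\lambda,r)-\log M(\lambda)\bigr)$ then behaves like $I(r)\sim c_2(b,p)|r|^p$ for large $|r|$. I would derive the pointwise Chernoff-type bound
\[
 a^{\ast k}(x)\;\le\;\frac{C}{k^{d/2}}\exp\!\bigl(-kI(x/k)\bigr)
\]
by an exponential tilt: the density $\tilde a_\lambda(y):=e^{(\lambda,y)-\log M(\lambda)}a(y)$ is a probability density with $\hat{\tilde a}_\lambda\in L^2$ via \eqref{ahat}, so $\|\tilde a_\lambda^{\ast k}\|_\infty$ is controlled by a Stone-type local CLT applied to $\tilde a_\lambda$; un-tilting with $\lambda$ chosen so that $k\nabla\log M(\lambda)=x$ gives the display above. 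Inserting this into the series and expanding by Stirling yields the exponent
\[
 g(k):=k\ln(t/k)+k-c_2|x|^p/k^{p-1}+O(\log k).
\]
The critical point of $g$ satisfies $\ln(k/t)=c_2(p-1)|x|^p/k^p$, giving $k^{\ast}\sim (c_2(p-1))^{1/p}|x|/(\ln(|x|/t))^{1/p}$ for $|x|\gg t$. Substituting back, the two leading contributions combine to $g(k^{\ast})\sim -c_p|x|(\ln(|x|/t))^{(p-1)/p}$ with $c_p=p\,c_2^{1/p}(p-1)^{-(p-1)/p}$, and splitting the series at $k=k^\ast/2$ and $k=2k^\ast$ (the tail $k\ge 2k^\ast$ is handled as in \eqref{n}-\eqref{g3t}) shows the full sum has the same logarithmic order. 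With $|x|=rt^{(1+\delta)/2}$ and $\ln(|x|/t)\sim\frac{\delta-1}{2}\ln t$, this produces \eqref{Pge1}.

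For the compact-support case \eqref{boundedsupp} the argument is much simpler: $\mathrm{supp}(a^{\ast k})\subset K_{k\mu}$ forces $a^{\ast k}(x)=0$ for all $k<k_0:=\lceil|x|/\mu\rceil$, and $\|a^{\ast k}\|_\infty\le \|a\|_\infty$ because $a$ is a probability density, whence $\|a\|_\infty\cdot a^{\ast (k-1)}\ast 1\le \|a\|_\infty$. Therefore
\[
 v(x,t)\;\le\;\|a\|_\infty e^{-t}\sum_{k\ge k_0}\frac{t^k}{k!}.
\]
Since $t/k_0\to 0$, the series is dominated by its first term, and Stirling gives $\log(t^{k_0}/k_0!)=-k_0\ln(k_0/t)+k_0+O(\log k_0)\sim -(|x|/\mu)\cdot\frac{\delta-1}{2}\ln t$ under $|x|=rt^{(1+\delta)/2}$, which is exactly \eqref{Pfinite} with $\tilde c(\mu)=(\delta-1)r/(2\mu)$.

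The main obstacle is the pointwise Chernoff density bound on $a^{\ast k}$: the classical Cramér theorem only controls tail probabilities, and upgrading it to a pointwise density estimate that is sharp and uniform in $k$ near the saddle index $k^{\ast}$ requires either a Stone-type local CLT for the tilted measure $\tilde a_\lambda$ (which demands verifying suitable integrability of $\hat{\tilde a}_\lambda$ uniformly in $\lambda$ along the optimising family), or a direct complex-contour analysis of the Fourier representation $a^{\ast k}(x)=(2\pi)^{-d}\int e^{-i(p,x)}\hat a(p)^k\,dp$. Everything else---splitting the $k$-sum, verifying that polynomial corrections to Stirling do not affect the leading logarithmic asymptotics, and assembling the explicit constant $c_p(b,r)$---is a direct adaptation of the Gaussian analysis already carried out above.
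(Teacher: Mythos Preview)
Your overall plan---estimate $a^{\ast k}(x)$ pointwise in the regime $|x|\gg k$, insert into the series \eqref{v}, and locate the dominant index via Stirling---is exactly the paper's strategy, and your treatment of the compact-support case \eqref{boundedsupp} is essentially identical to the paper's (the paper writes it as \eqref{Vfin}--\eqref{Vfin1}).

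The substantive difference is in how the pointwise bound on $a^{\ast k}(x)$ is obtained. You propose an exponential tilt followed by a Stone-type local CLT for the tilted family $\tilde a_\lambda$, and you correctly flag the uniformity in $\lambda$ as the main obstacle. The paper sidesteps this entirely with a much more elementary device: first Markov's inequality gives a tail bound $P\{|S_k|>|x|\}\le e^{-\varkappa_p(|x|/k)^p k}$ for $k\le\alpha_p|x|$ (Lemma~\ref{expp}), and then a single convolution step
\[
a^{\ast(k+1)}(x)=\int a^{\ast k}(z)\,a(x-z)\,dz
\]
split at $|z-x|\lessgtr \tfrac12|x|$ (or $|z-x|\lessgtr h k^{1/p}$) converts this tail bound into the pointwise density estimate $a^{\ast k}(x)\le C e^{-\tilde\varkappa_p |x|^p/k^{p-1}}$, see \eqref{akp1}--\eqref{akpg1}. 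No tilting, no local limit theorem, no uniformity issues---just Chernoff and one convolution. For the range $k\ge \alpha_p|x|$ the paper uses only the trivial bound $a^{\ast k}\le C_1$ together with the fact that $S_0(k,t)=k\ln(t/k)+k$ is already decreasing past $k=t$, which kills those terms outright (see \eqref{maxS0}--\eqref{maxS}). This buys simplicity and robustness: nothing beyond the one-sided hypothesis \eqref{lt} is used.

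Two smaller points. First, your asymptotic $\log M(\lambda)\sim c_1|\lambda|^{p/(p-1)}$ is too strong under \eqref{lt} alone; only the upper bound holds (the matching lower bound is condition~${\bf A}_p$, which Theorem~\ref{LTail-1var3} does \emph{not} assume). Fortunately only the upper bound on $L$---hence the lower bound $I(r)\ge \varkappa_p|r|^p$---is needed for \eqref{Pge1}, so your argument survives, but the explicit constant you compute for $c_p$ is not justified at this level of generality. Second, you do not treat $p=1$, where $M(\lambda)$ is finite only for $|\lambda|<b$ and the saddle may hit the boundary; the paper handles this case separately in \eqref{P2} and \eqref{akp1}, obtaining the bound with $(\ln t)^0=1$ and $c_1=br/8$.
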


\medskip

In the region $x \sim t$, usually called  large deviations region,  our approach relies essentially on the properties of the rate function $I(r)$ of the sum of i.i.d. random variables.
%In multidimensional case on the directions in $R^{d}$. That is why in this part of our paper we will consider $r \in R^d$.
From now on $S_k$ stands for the sum of  i.i.d. random variables (vectors) $X_1, \ldots, X_k$ with common distribution $a(x)$.
From \eqref{lt} %or \eqref{boundedsupp}
it follows that the random variables $X_j$ have exponential moment $\Lambda(\gamma)=\mathbb{E} e^{\gamma X_1}$ for all $\gamma$ from a neighborhood of $0$ (the so-called Cramer condition). Under this condition the large deviation principle holds for $S_k$ with a rate function
\begin{equation}\label{I(r)}
I(r) = \sup\limits_{\gamma} \, (\gamma \cdot r - L(\gamma)), \quad r, \gamma \in R^d,
\end{equation}
where $I(r)$ is the Legendre transform of the cumulant generating  function $ L(\gamma) = \ln \Lambda(\gamma)$, and $\gamma \cdot r$ stands for the scalar product in $R^d$.

In order to formulate the main result of this section we denote by $\xi_r$  a positive solution
of the equation
\begin{equation}\label{eqeq}
\ln \xi = I(\xi r) - \xi r \cdot \nabla I(\xi r), \quad \xi \in R,
\end{equation}
and introduce the function
\begin{equation}\label{Phi(r)}
\Phi(r) = 1 -\frac 1{\xi_r} \big( 1+\ln \xi_r - I( \xi_r r) \big).
\end{equation}
Equation \eqref{eqeq} has a unique solution $\xi_r>0$ for
any $r \in R^d\setminus \{0\}$, moreover  $0< \xi_r <1$, see Lemma \ref{Xi} below.

We introduce now additional technical conditions on the kernel.\\
$({\bf A}_{1})$
in the case $p=1$ for any $b_1 > b$ and any $\theta \in S^{d-1}$
\begin{equation}\label{two-sided-bis}
\mathbb{E} e^{b_1 X  \cdot \theta} = \infty,
\end{equation}
where $b$ is the same constant as in \eqref{lt}. \\
$({\bf A^s_1})$
in the case $p=1$ for any $\theta \in S^{d-1}$
\begin{equation*}\label{stwo}
\mathbb{E} |X| e^{b  X \cdot \theta} = \infty.
\end{equation*}
$({\bf A}_{p})$
in the case $p>1$
\begin{equation}\label{two-sided_p-bis}
L(\gamma) =\ln \mathbb{E} e^{\gamma  \cdot X} = C(b,p)|\gamma|^{p/(p-1)}(1+o(1)),\quad\hbox{as } |\gamma|\to\infty,
\end{equation}
where  $C(b,p)=\frac{p-1}p (bp)^{-1/(p-1)}$ is a constant appearing in the logarithmic asymptotics of the Laplace transform of $e^{-b|x|^p}$.

\begin{remark}{\rm
Condition  ${\bf A}_{p}, \ p \ge 1,$ can be treated as a sort of soft lower bound for $a(x)$. In particular, it holds if
$a(x)$ satisfies the following two-sided estimate
\begin{equation*}\label{2side}
C_2 e^{-b |x|^p} \le a(x) \le C_1 e^{-b |x|^p}, \quad p\geq 1.
\end{equation*}
Observe also that under condition  ${\bf A}_{p}, p \ge 1,$ the function $a(x)$ can not satisfy \eqref{boundedsupp}.

It should be  emphasized that in the case $p=1$ conditions ${\bf A_1}, {\bf A^s_1}$ are required for proving the main result on the asymptotics of the heat kernel, while in the case $p>1$ condition ${\bf A}_{p} $ is only used for determining the asymptotic behaviour of the function $\Phi(r)$ for large $r$.}
\end{remark}

\begin{theorem}[Asymptotic upper bounds]\label{LTail-2_MD}
Let conditions \eqref{a1}--\eqref{a2} and \eqref{lt} be fulfilled , and assume additionally that in the case
$p=1$ condition ${\bf A_1}$ holds.
Then for any $r \in R^d\backslash \{ 0 \}$ and for $x= rt (1+o(1))$  the following asymptotic estimate holds as $t \to \infty$:
\begin{equation}\label{AsympMD}
v(x,t) \le e^{- \Phi(r) t (1+ o(1))},
\end{equation}
where the function $\Phi(r)$ is defined by \eqref{Phi(r)}.

% If $x = r t$, then
%$$
%v(x,t) = e^{ - \Phi(r) \, t \, (1+ O(\frac{\ln t}{t})) }.
%$$

Moreover, $\Phi(0)=0$,  $\Phi(r)>0$, if $r \neq 0$, $\Phi$ is a convex function, and the following limit relations hold:
\begin{eqnarray}\label{Phi0-bis2}
&\Phi(r) = \frac12 \, \sigma^{-1}r \cdot r \, (1+ o(1)), \quad& \mbox{ as } \; r \to 0;\\[2mm]
\label{Phiinfty-bis2}
&\Phi(r) \to \infty, \quad&\mbox{ as } \; r \to \infty.
\end{eqnarray}
If $p=1$, then
\begin{equation}\label{Phi-infty1-bis2}
 \Phi (r) = b|r| \, (1+o(1)),  \quad \mbox{ as } \; |r| \to \infty.
 \end{equation}
 If $p>1$ and condition  ${\bf A}_{p}$ holds, then
 \begin{equation}\label{Phi-inftyG1-bis2}
 \Phi (r) =\frac p{p-1} \big(b(p-1)\big)^{1/p} |r| (\ln |r|)^{\frac{p-1}{p}} \, (1+o(1)), \quad \mbox{ as } \; |r| \to \infty.
 \end{equation}
 If  condition \eqref{boundedsupp} holds, then
\begin{equation}\label{Phibs-bis}
 \Phi (r) \ge  \frac{1}{\mu} \, |r|\ln | r| \quad \mbox{ as } \; |r|\to \infty.
\end{equation}
\end{theorem}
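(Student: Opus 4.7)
The proof naturally separates into two pieces. First, the upper bound \eqref{AsympMD}: I would estimate each convolution power $a^{\ast k}$ by a Cram\'er-type exponential tilt, insert the resulting bound into the series \eqref{v}, and perform a Laplace-type saddle-point analysis in $k$. Second, the qualitative and sharp asymptotic properties of $\Phi$: the cleanest route is to recognise that $v(\cdot,t)$ is the regular part of the density of the compound Poisson process $S_{N_t}$ with $N_t\sim\mathrm{Poisson}(t)$ and jump law $a$, whose log-moment generating function is $t\bigl(e^{L(\gamma)}-1\bigr)$, so that $\Phi$ admits the equivalent Legendre-transform representation
\[
\Phi(r) \;=\; 1 + \sup_{\gamma}\bigl\{\gamma\cdot r - e^{L(\gamma)}\bigr\},
\]
which I would verify matches \eqref{Phi(r)} by matching at the critical point (using $\ln\xi_r = -L(\gamma^\ast)$ and $\nabla L(\gamma^\ast)=\xi_r r$).

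\textbf{Saddle-point estimate.} Introducing the tilted density $\tilde a_\gamma(y):=e^{\gamma\cdot y-L(\gamma)}a(y)$, Young's inequality gives $\|(\tilde a_\gamma)^{\ast k}\|_\infty\le\|\tilde a_\gamma\|_\infty$, whence
\[
a^{\ast k}(x) \;=\; e^{-\gamma\cdot x+kL(\gamma)}(\tilde a_\gamma)^{\ast k}(x) \;\le\; \|\tilde a_\gamma\|_\infty\, e^{-\gamma\cdot x + kL(\gamma)},
\]
and optimising at $\gamma=\nabla I(x/k)$ converts the exponential factor into $e^{-kI(x/k)}$, with prefactor locally bounded in $x/k$. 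Applying Stirling's approximation to $t^k/k!$ and substituting $k=t/\xi$, $x=rt(1+o(1))$, the total exponent divided by $t$ becomes $F(\xi):=-1+\xi^{-1}\bigl(\ln\xi+1-I(r\xi)\bigr)$; a direct calculation shows that $F'(\xi)=0$ is precisely \eqref{eqeq}, whose unique positive solution $\xi_r\in(0,1)$ is provided by Lemma \ref{Xi}, and $F(\xi_r)=-\Phi(r)$ with $\Phi$ given by \eqref{Phi(r)}. The tail of the series is standard: small-$k$ terms are absorbed via the crude estimate $a^{\ast k}(x)\le\|a\|_\infty$, large-$k$ terms via the ratio inequality \eqref{n}, both contributing at most $e^{-(\Phi(r)+\varepsilon)t}$ and therefore not interfering with the leading behaviour.

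\textbf{Properties of $\Phi$.} From the Legendre representation, convexity of $\Phi$ is immediate (supremum of affine functions plus a constant), $\Phi(0)=1-\inf_\gamma e^{L(\gamma)}=0$, and $\Phi(r)>0$ for $r\ne 0$ follows from the strict convexity of $e^{L(\gamma)}$ at $\gamma=0$ combined with testing $\gamma=\varepsilon r/|r|$. For \eqref{Phi0-bis2} I would Taylor-expand $L(\gamma)=\tfrac12\sigma\gamma\cdot\gamma+O(|\gamma|^3)$, so $e^{L(\gamma)}-1=\tfrac12\sigma\gamma\cdot\gamma+o(|\gamma|^2)$; the maximiser is then $\gamma^\ast\approx\sigma^{-1}r$ and a direct computation gives $\Phi(r)=\tfrac12\sigma^{-1}r\cdot r\,(1+o(1))$. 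The large-$|r|$ asymptotics \eqref{Phiinfty-bis2}--\eqref{Phibs-bis} are then extracted from how the maximiser $\gamma^\ast(r)$ approaches the boundary of the effective domain of $L$: under ${\bf A_1}$ this domain is $\{|\gamma|<b\}$ with $L$ blowing up at the boundary, forcing $|\gamma^\ast|\nearrow b$ and yielding $\Phi(r)\sim b|r|$; under ${\bf A_p}$ the hypothesised $L(\gamma)\sim C(b,p)|\gamma|^{p/(p-1)}$ combined with the stationarity condition $r=\nabla L(\gamma^\ast)\,e^{L(\gamma^\ast)}$ is solved by dominant balance, producing the logarithmic correction $(\ln|r|)^{(p-1)/p}$ and the explicit constant $\tfrac{p}{p-1}(b(p-1))^{1/p}$; in the compactly-supported case $\gamma\cdot X\le\mu|\gamma|$ forces $L(\gamma)\le\mu|\gamma|$, so the lower bound $\Phi(r)\ge\sup_\gamma\{\gamma\cdot r-e^{\mu|\gamma|}\}$ optimised in $\gamma$ delivers \eqref{Phibs-bis}.

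\textbf{Main obstacle.} The delicate technical point is controlling the prefactor $\|\tilde a_\gamma\|_\infty$ uniformly on a range of $k$ wide enough to contain the saddle $k_\star=t/\xi_r$: when $k\ll t$ the tilt $\gamma=\nabla I(x/k)$ approaches the boundary of the domain of $L$ and the Young bound degenerates. Ensuring that the contribution from this regime is genuinely exponentially dominated by the saddle contribution is where the refined conditions ${\bf A_1^s}$ (for $p=1$) and \eqref{boundedsupp} enter, and it is what prevents the proof from being a literal quotation of Cram\'er's theorem for the compound Poisson process.
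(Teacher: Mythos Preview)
Your Legendre representation $\Phi(r)=1+\sup_\gamma\{\gamma\cdot r-e^{L(\gamma)}\}$ is correct and is genuinely cleaner than what the paper does: the paper never writes $\Phi$ in this form, and instead proves convexity, $\Phi(0)=0$, and all the asymptotics \eqref{Phi0-bis2}--\eqref{Phibs-bis} by differentiating the implicit definition \eqref{Phi(r)}--\eqref{eqeq} and tracking the behaviour of $\xi_r$ (Propositions~\ref{I_MD} and~\ref{xi}). Your route makes the qualitative properties essentially one-liners.

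There is, however, a real gap in your saddle-point estimate. The claim that ``small-$k$ terms are absorbed via the crude estimate $a^{\ast k}(x)\le\|a\|_\infty$'' fails whenever $\Phi(r)>1$, which by \eqref{Phiinfty-bis2} happens for all large $|r|$. Indeed, $e^{-t}\sum_{k<\alpha_1 t}t^k/k!$ is the lower tail of a Poisson$(t)$ variable, and its exponential rate $\alpha_1\ln\alpha_1-\alpha_1+1$ tends to $1$ (not $+\infty$) as $\alpha_1\to0$; so this range contributes $e^{-t(1+o(1))}$, which swamps $e^{-\Phi(r)t}$ once $\Phi(r)>1$. The paper deals with this via the nontrivial Lemma~\ref{LD-bis}, whose key inequality \eqref{akLD-bis3bis}, namely $a^{\ast k}(x)\le e^{-I(r)t(1+o(1))}$ for \emph{all} $k\le\alpha_1 t$, requires separate arguments for $p=1$ (a radial majorant plus the one-dimensional Proposition~\ref{LD-proposition}) and $p>1$ (a Markov bound exploiting the superlinear growth \eqref{gti} of $I$). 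This is precisely the work your proposal skips. Note also that condition $\mathbf{A_1^s}$ is used in the paper only for the \emph{lower} bound (Theorem~\ref{Asymptotics}); the upper bound needs only $\mathbf{A_1}$.

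Ironically, the cleanest repair is already implicit in your own second paragraph: do \emph{not} split into $k$-ranges at all. Fix $\gamma$ once, use $a^{\ast k}(x)\le\|\tilde a_\gamma\|_\infty\,e^{-\gamma\cdot x+kL(\gamma)}$ for every $k$, and sum the exponential series directly:
\[
v(x,t)\;\le\;\|\tilde a_\gamma\|_\infty\,e^{-\gamma\cdot x}\,e^{-t}\sum_{k\ge0}\frac{(te^{L(\gamma)})^k}{k!}
\;=\;\|\tilde a_\gamma\|_\infty\,\exp\!\bigl\{-t(\gamma\cdot r-e^{L(\gamma)}+1)(1+o(1))\bigr\}.
\]
Now optimise in $\gamma$; the prefactor is a fixed constant and the exponent is $-\Phi(r)t$. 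This is shorter than either your saddle-point argument or the paper's three-range decomposition, and it sidesteps the small-$k$ issue entirely.
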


\begin{remark}{\rm
Notice that under the assumptions of Theorem  \ref{LTail-2_MD} the  function $\Phi(r)$ need not be isotropic. This is illustrated by formula \eqref{Phi0-bis2}. However,  the additional condition ${\bf A}_{p}$ ensures that for large $|r|$ the principal term of the asymptotics of $\Phi(r)$ is radially symmetric, see \eqref{Phi-infty1-bis2}--\eqref{Phi-inftyG1-bis2}.}
\end{remark}

\begin{corollary}[Spherically symmetric kernels]
Let $a(x)=a(|x|), x \in R^d,$ be a spherically symmetric kernel satisfying all the conditions of Theorem \ref{LTail-2_MD}.
% where condition ${\bf A}_{1}$ is fulfilled for $|x|$.
Then for any $s>0$ and for $|x| = s t (1+o(1))$ the following asymptotic estimate holds as $t \to \infty$:
\begin{equation}\label{LT2}
v(x,t) \le e^{ - \Phi(s) t (1+ o(1))},
\end{equation}
where the function $\Phi(s)$ is defined in \eqref{Phi(r)},
and formulae \eqref{Phi0-bis2} - \eqref{Phibs-bis} from Theorem \ref{LTail-2_MD} take an easier form, namely
\begin{eqnarray}\label{Phi0}
\Phi(s) = \frac{s^2}{2 \, \sigma}(1+ o(1)), \quad &&\mbox{ as } \; s \to 0;\\
\label{Phi-inftyG}
 \Phi (s) \to \infty, \quad &&\mbox{ as } \; s \to \infty
\end{eqnarray}
If $p=1$, then
\begin{equation}\label{Phi-infty1}
 \Phi (s) = bs \, (1+o(1)), \quad s \to \infty.
 \end{equation}
If $p>1$ and in addition conditions ${\bf A}_{p}$ hold, then
 \begin{equation}\label{Phi-inftyG1}
 \Phi (s) =\frac p{p-1} \big(b(p-1)\big)^{1/p} s (\ln s)^{\frac{p-1}{p}} \, (1+o(1)), \quad s \to \infty .
 \end{equation}
%where $h_p=\big(b(p-1)\big)^{-1/p}$.
 If  \eqref{boundedsupp} holds, then
\begin{equation}\label{Phibs}
 \Phi (s) \ge  \frac{1}{\mu} \, s\ln  s \quad \mbox{ as } \; s\to \infty.
\end{equation}
\end{corollary}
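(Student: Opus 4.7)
The corollary is essentially a direct specialization of Theorem \ref{LTail-2_MD} to the spherically symmetric setting, so the plan is to explain how the various objects involved in Theorem \ref{LTail-2_MD} inherit spherical symmetry from $a$, and then to read off the simplified formulas. The content of the proof is thus almost entirely bookkeeping, once one observes that the rate function $I$, the solution $\xi_r$ of \eqref{eqeq}, and hence the function $\Phi$ are all radially symmetric.

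My first step will be to show that $L(\gamma)=\ln\mathbb{E}\,e^{\gamma\cdot X}$ depends only on $|\gamma|$. Since $a(x)=a(|x|)$, for every orthogonal matrix $Q$ we have $L(Q\gamma)=L(\gamma)$; in particular $L(\gamma)=\tilde L(|\gamma|)$ for some function $\tilde L$. Taking the Legendre transform \eqref{I(r)} and optimizing the inner product $\gamma\cdot r$ at fixed $|\gamma|$ along the direction $r/|r|$, I will conclude that $I(r)=\tilde I(|r|)$ with $\tilde I(s)=\sup_{\lambda\in\mathbb{R}}(\lambda s-\tilde L(\lambda))$. Using $\nabla I(\xi r)=\tilde I'(\xi|r|)\,r/|r|$, equation \eqref{eqeq} reduces to the one-dimensional equation $\ln\xi=\tilde I(\xi s)-\xi s\,\tilde I'(\xi s)$ with $s=|r|$, whose unique positive solution (existing by Lemma \ref{Xi}) depends only on $s$. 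Substituting into \eqref{Phi(r)}, I obtain $\Phi(r)=\tilde\Phi(|r|)$, so the upper bound \eqref{AsympMD} immediately yields \eqref{LT2}.

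For the asymptotic formulas, the key observation is that under spherical symmetry the covariance matrix is a scalar multiple of the identity: by rotational invariance, $\sigma_{ij}=\int x_i x_j\,a(x)\,dx=0$ for $i\neq j$ and $\sigma_{ii}$ is the same for every $i$, so
\[
\sigma_{ij}=\sigma\,\delta_{ij},\qquad \sigma=\frac{1}{d}\int_{\mathbb{R}^d}|x|^2 a(x)\,dx.
\]
Therefore $(\sigma^{-1}r,r)/2=|r|^2/(2\sigma)$, and \eqref{Phi0-bis2} specializes to \eqref{Phi0}. The remaining asymptotic statements \eqref{Phi-inftyG}, \eqref{Phi-infty1}, \eqref{Phi-inftyG1} and \eqref{Phibs} are obtained simply by substituting $|r|=s$ into the corresponding radial formulas \eqref{Phiinfty-bis2}, \eqref{Phi-infty1-bis2}, \eqref{Phi-inftyG1-bis2} and \eqref{Phibs-bis} of Theorem \ref{LTail-2_MD}; no further argument is needed since these formulas already depend only on $|r|$.

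The one point that deserves care, and which I regard as the only potentially delicate step, is the reduction of the Legendre transform under orthogonal invariance: one must make sure that the supremum in \eqref{I(r)} is indeed attained on the half-line $\{\lambda r/|r|:\lambda\ge 0\}$. This follows because for fixed $|\gamma|$ the function $\gamma\mapsto\gamma\cdot r$ is maximized on the sphere $\{|\gamma|=\rho\}$ precisely at $\gamma=\rho\,r/|r|$, while $L(\gamma)$ is constant on that sphere; optimizing over $\rho\in\mathbb{R}$ (not just $\rho\ge 0$) gives the stated one-dimensional variational formula. Once this is in place, every remaining claim is a direct restatement.
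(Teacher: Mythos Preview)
Your proposal is correct. The paper does not give a separate proof of this corollary; it is stated as an immediate consequence of Theorem~\ref{LTail-2_MD}, and your argument --- verifying that $L$, $I$, $\xi_r$, and $\Phi$ all inherit radial symmetry from $a$, and that $\sigma = \sigma\,\mathrm{Id}$ --- spells out precisely the reduction the paper leaves implicit.
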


\begin{remark}{\rm  Observe that relation \eqref{LT2}  and the asymptotics in \eqref{Phi0} and \eqref{Phibs} of $\Phi$ coincide with the
estimates of the heat kernel on graphs \cite{Davies} (see \eqref{Dav}--\eqref{davies}).}
\end{remark}

\medskip

Using another approach that relies on some exponential transformation of the random variable with density $a(x)$ under slightly more strong condition (for $p=1$) we can show that the upper bound obtained in Theorem \ref{LTail-2_MD} gives in fact the large time asymptotics of the fundamental solution. The following statement holds.

\begin{theorem}[Large time asymptotics]\label{Asymptotics}
Let conditions \eqref{a1}--\eqref{a2} and \eqref{lt} be fulfilled , and assume additionally that in the case
$p=1$ condition ${\bf A^s_1}$ holds.
Then for any $r \in R^d\backslash \{ 0 \}$ and  $x= rt (1+o(1))$
\begin{equation}\label{AsympMD-bis}
v(x,t) = e^{- \Phi(r) t (1+ o(1))} \quad \mbox{as } \; t \to \infty,
\end{equation}
where the function $\Phi(r)$ is defined by \eqref{Phi(r)} and possesses all the properties enumerated in Theorem \ref{LTail-2_MD}.
%\\
%If $x = r t$, then
%$$
%v(x,t) = e^{ - \Phi(r) \, t \, (1+ O(\frac{\ln t}{t})) }.
%$$
\end{theorem}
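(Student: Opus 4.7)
The upper bound $v(x,t)\le e^{-\Phi(r)t(1+o(1))}$ is already established in Theorem \ref{LTail-2_MD}, so the plan is to supply the matching lower bound by a Cramer–type exponential tilt adapted to the representation \eqref{v}.

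First I would set up the tilted density. For $\gamma$ in the interior of the domain of $\Lambda(\gamma)=\mathbb{E}e^{\gamma\cdot X}=e^{L(\gamma)}$ define
\begin{equation*}
a_\gamma(x)=\frac{e^{\gamma\cdot x}a(x)}{\Lambda(\gamma)},
\end{equation*}
which is again a probability density. A direct computation gives $a_\gamma^{*k}(x)=e^{\gamma\cdot x-kL(\gamma)}a^{*k}(x)$, whence
\begin{equation*}
v(x,t)=e^{-t-\gamma\cdot x}\sum_{k=1}^{\infty}\frac{\bigl(te^{L(\gamma)}\bigr)^{k}}{k!}\,a_\gamma^{*k}(x).
\end{equation*}
I would then choose $\gamma=\gamma(r)$ so that the tilted mean $\nabla L(\gamma)$ equals $\xi_r r$, with $\xi_r$ the solution of \eqref{eqeq}; by Legendre duality this gives $L(\gamma)=\gamma\cdot\xi_r r-I(\xi_r r)$. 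In the case $p=1$ the solvability of $\nabla L(\gamma)=\xi_r r$ for arbitrary $r$ is exactly where the strengthened hypothesis ${\bf A^s_1}$ (namely $\mathbb{E}|X|e^{bX\cdot\theta}=\infty$) enters: it guarantees that $|\nabla L(\gamma)|\to\infty$ as $\gamma$ approaches the boundary of the effective domain, so every direction $\xi_r r$ is attained. For $p>1$ this is automatic.

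Next I would localize the sum around the optimal index $k^{*}=[t/\xi_r]$, keeping only indices with $|k-k^{*}|\le C\sqrt{k^{*}}$. For these $k$ the tilted sum $S_k^{(\gamma)}$ has mean $k\nabla L(\gamma)=k\xi_r r$, which differs from $x=rt(1+o(1))$ by an amount of order $\sqrt{k^{*}}$. Since the tilted distribution $a_\gamma$ inherits exponential moments (hence finite moments of every order) from $a$, and since $\hat a_\gamma(p)$ decays at infinity thanks to \eqref{ahat} together with the tilting by a smooth bounded factor, the local central limit theorem applies to $a_\gamma^{*k}$ and yields
\begin{equation*}
a_\gamma^{*k}(x)\ge \frac{c}{k^{d/2}},\qquad k^{*}-C\sqrt{k^{*}}\le k\le k^{*}+C\sqrt{k^{*}},
\end{equation*}
for some $c>0$, as $t\to\infty$. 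Plugging this in and using Stirling for $k!$, a single representative term gives
\begin{equation*}
\ln\!\Bigl(e^{-t-\gamma\cdot x}\frac{(te^{L(\gamma)})^{k^{*}}}{k^{*}!}a_\gamma^{*k^{*}}(x)\Bigr)=-t+k^{*}\bigl(\ln t-\ln k^{*}+1+L(\gamma)\bigr)-\gamma\cdot x+O(\ln t).
\end{equation*}
Substituting $k^{*}=t/\xi_r$, $x=rt(1+o(1))$ and $L(\gamma)=\gamma\cdot\xi_r r-I(\xi_r r)$, the terms involving $\gamma$ cancel and the bracket collapses to
\begin{equation*}
-t+\frac{t}{\xi_r}\bigl(\ln\xi_r+1-I(\xi_r r)\bigr)=-\Phi(r)\,t,
\end{equation*}
by definition \eqref{Phi(r)}. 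Summing the $O(\sqrt{k^{*}})$ non-negative terms near $k^{*}$ only improves the bound, so $v(x,t)\ge e^{-\Phi(r)t(1+o(1))}$, matching the upper bound from Theorem \ref{LTail-2_MD}.

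The technical core I expect to be the main obstacle is justifying the local CLT uniformly on the $\sqrt{k^{*}}$-window around $k^{*}$ for the tilted law, especially checking the Fourier-side decay of $\hat a_\gamma$ and controlling the density at a point $x$ displaced by $O(\sqrt{k^{*}})$ from the mean. A clean way is to verify that $a_\gamma\in L^1\cap L^2$ and $|\hat a_\gamma(p)|\to 0$ at infinity (inherited from \eqref{ahat} up to a convolution with the Fourier transform of the bounded tilt), then apply the standard smoothed inversion estimate combined with Gaussian bounds for $\hat a_\gamma^{k}$ near the origin, using that the tilted covariance matrix $\nabla^{2}L(\gamma)$ is positive definite. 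The properties of $\Phi$ were already recorded in Theorem \ref{LTail-2_MD} and require no further argument.
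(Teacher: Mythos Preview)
Your approach is the same as the paper's—exponential tilting followed by a local CLT for the tilted density, then picking out a single term of the series near $k^*=[t/\xi_r]$. There is, however, one genuine gap.

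You tilt at a \emph{fixed} $\gamma=\gamma(r)$ with $\nabla L(\gamma)=\xi_r r$, so that the mean of $S_{k^*}^{(\gamma)}$ equals $k^*\xi_r r=rt+O(1)$. You then claim that this differs from $x=rt(1+o(1))$ by ``an amount of order $\sqrt{k^*}$''. That is not justified: the hypothesis only gives $x-rt=o(t)$, and nothing prevents, say, $x=rt+t^{0.9}$. In that case $|x-k\nabla L(\gamma)|\sim t^{0.9}\gg\sqrt{k^*}$ for every $k$ in your window, and the plain local CLT bound $a_\gamma^{*k}(x)\ge c\,k^{-d/2}$ fails.

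The paper closes this gap by tilting at a \emph{$t$-dependent} parameter: set $\hat r=x/t$, $r^*=\xi_{\hat r}\hat r$, and solve $\nabla L(\gamma^*)=r^*$. Then with $k=[t/\xi_{\hat r}]$ the tilted mean $kr^*$ equals $x$ up to $O(1)$, so the local CLT applies at the exact point needed. The price is that $\gamma^*$ now varies with $t$, so one must verify the local CLT \emph{uniformly} for $\gamma^*$ in a neighbourhood of $\gamma^*(r^*_0)$; the paper does this via a short lemma showing that $a_{\gamma^*}$ has a uniform exponential tail and uniformly nondegenerate covariance on that neighbourhood, then invokes the uniform local limit theorem of Bhattacharya--Rao. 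Your argument can be repaired the same way; alternatively you could keep the fixed tilt and replace the local CLT by the moderate-deviation density estimate (Lemma~\ref{deltaless1} applied to $a_\gamma$), since $|x-rt|/k\to 0$ gives $a_\gamma^{*k}(x)=e^{-o(k)}$, which is already enough for the logarithmic lower bound. Either route works, but the step as you wrote it does not.
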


%\begin{remark}{\rm
%We emphasize that the upper bounds obtained in Theorems \ref{LTail-2_1D} - \ref{LTail-2_MD} are sharp. Indeed, due to the Large Deviations Principle results, the functions $a^{\ast k}(x)$ satisfy for large $k$ the relation
%$$
%-\lim\limits_{\delta \to 0} \lim\limits_{k \to \infty} \frac{1}{k} \ln \int\limits_{Q_{k\delta}(kx)} a^{\ast k} (z) dz= I(x).
%$$}
%\end{remark}

\subsection{Properties of $I(r)$ and $\Phi(r)$}

We preface the proof of the theorems by a number of technical statements.
We discuss in this section the asymptotic properties of the function $I(r)$ defined by \eqref{I(r)} that will be used further in the analysis of the function $v(x,t)$ in the regions of moderate and large deviations. Due to the symmetry of $a(x)$ stated in \eqref{a1} the functions $I(r)$ and $L(\gamma)$ are symmetric with respect to zero, that is $I(-r)=I(r)$ and $L(-\gamma) = L(\gamma)$.
We denote by $\cal A$ the convex hull of the support of $a(\cdot)$. From our conditions \eqref{a1}--\eqref{a2} it follows that
$\cal A$ contains a neighbourhood of zero. Notice that the set $\cal A$ is symmetric with respect to the origin.

First we consider the 1-D case. In this case, ${\cal A} = [\inf {\rm supp} \, a, \, \sup {\rm supp} \, a] = [- \mu, \mu]$.
% with $\mu = \sup {\rm supp} \, a$.

\begin{proposition}[1-D case]\label{I}
1. For any distribution $a(x)$ satisfying  \eqref{a1}--\eqref{a2} and\eqref{lt}  we have
\begin{equation}\label{I0}
I(s) = \frac{s^2}{2\sigma}(1+o(1)) \quad \mbox{ as } \; s \to 0.
\end{equation}
2. If the distribution $a(x)$ in addition satisfies condition  ${\bf A}_p$, $p\geq 1$,
then $I(s)$ has the following asymptotics as $s \to \infty$:
\begin{equation}\label{Iinfty}
% \lim\limits_{s \to \infty} I'(s) = b, \; \;
 I(s) = b s (1+o(1)), \quad \mbox{if} \quad p=1; \qquad I(s) =  b s^p (1+o(1)), \quad \mbox{if} \quad p>1,
\end{equation}
where $b$ is the same constant as in  \eqref{lt}. If $p>1$ then
\begin{equation}\label{gti}
\lim\limits_{s\to\infty}\frac{I(s)}{|s|}=+\infty.
\end{equation}
3. If the distribution $a(x)$ in addition  satisfies \eqref{boundedsupp}, then $I(s)$ is a smooth function on $(- \mu, \mu)$,
$$
I(s) \to \infty \quad \mbox{ as } \; s \to \mu-0 \; \mbox{ or } \; s \to -\mu+0,
$$
and  $I(s) = \infty$ if $|s| \ge \mu$.
\end{proposition}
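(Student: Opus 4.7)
The function $L(\gamma)=\ln\mathbb{E}\,e^{\gamma X_1}$ is smooth, convex and even (by the symmetry of $a$ in \eqref{a1}), and $I$ is its Legendre transform. My plan is to derive each part from the corresponding behaviour of $L$: Part 1 from the expansion of $L$ near $0$, Part 2 from the growth of $L$ at infinity, and Part 3 from the structure of the effective domain of $L$.

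For Part 1, I would Taylor expand $L(\gamma)=\tfrac{\sigma}{2}\gamma^{2}+o(\gamma^{2})$ using $L(0)=0$, $L'(0)=\mathbb{E}X_1=0$, $L''(0)=\sigma$. The stationary point $\gamma^{*}(s)$ defined by $L'(\gamma^{*})=s$ tends to $0$ as $s\to 0$, so the expansion is valid in a neighborhood of it and the standard Legendre inversion yields $I(s)=\tfrac{s^{2}}{2\sigma}(1+o(1))$.

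For Part 2 with $p=1$: the bound $a(x)\le Ce^{-b|x|}$ gives $L(\gamma)<\infty$ for $|\gamma|<b$, while condition $\mathbf{A}_1$ forces $L(\gamma)=+\infty$ for $|\gamma|>b$. Since $L\ge 0$ and the supremum is effectively taken over $\gamma\le b$, we have $\gamma s-L(\gamma)\le \gamma s\le bs$, giving $I(s)\le bs$. For the matching lower bound I would take any $b'<b$ and use $I(s)\ge b's-L(b')$, whence $\liminf I(s)/s\ge b'$, and then let $b'\uparrow b$. For $p>1$: $\mathbf{A}_p$ gives $L(\gamma)=C(b,p)|\gamma|^{p/(p-1)}(1+o(1))$ at infinity, and the stationary $\gamma^{*}(s)\to\infty$ as $s\to\infty$ by superlinearity of $L$. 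A direct Legendre-transform computation for the pure-power function $C(b,p)|\gamma|^{p/(p-1)}$ (with $q=p/(p-1)$, using the explicit form $C(b,p)=\tfrac{p-1}{p}(bp)^{-1/(p-1)}$) yields exactly $b|s|^{p}$; the error term $(1+o(1))$ passes through the transform. The relation $I(s)/|s|\to\infty$ then follows at once.

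For Part 3, since supp$\,a\subset[-\mu,\mu]$, $L$ is finite on all of $\mathbb{R}$; by strict convexity of $L$ and $L'(\gamma)\to\pm\mu$ as $\gamma\to\pm\infty$ (dominated convergence), $L':\mathbb{R}\to(-\mu,\mu)$ is a smooth bijection, so $I(s)=s\gamma(s)-L(\gamma(s))$ is smooth on $(-\mu,\mu)$ by the inverse function theorem. For $s>\mu$, the bound $L(\gamma)\le\gamma\mu$ for $\gamma>0$ implies $\gamma s-L(\gamma)\ge \gamma(s-\mu)\to+\infty$, so $I(s)=+\infty$. At $s=\mu$ itself I would write $\mu\gamma-L(\gamma)=-\ln\int_{-\mu}^{\mu}e^{\gamma(x-\mu)}a(x)\,dx$ and use dominated convergence (the integrand tends to $0$ a.e.\ and $a$ is a density, not a point mass at $\mu$) to obtain $I(\mu)=+\infty$. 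Finally $\lim_{s\to\mu^{-}}I(s)=+\infty$ follows from lower semicontinuity, which is automatic for any Legendre transform.

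The main technical point will be the Legendre-transform computation in the $p>1$ case: one must verify that the algebraic collapse $C(b,p)\mapsto b$ works out cleanly with $q=p/(p-1)$ and that pushing the $(1+o(1))$ through the transform does not introduce uniformity issues (this uses that $\gamma^{*}(s)\to\infty$, so only the tail asymptotics of $L$ matter). The second subtle point is the behaviour at $s=\mu$ in Part 3, where the crude bound $L(\gamma)\le\gamma\mu$ is too weak and the dominated-convergence refinement is needed to show strict blow-up of $\mu\gamma-L(\gamma)$.
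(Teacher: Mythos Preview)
Your proposal is correct and follows the same overall Legendre-transform strategy as the paper. Part~1 and the $p>1$ half of Part~2 are essentially identical to the paper's argument.

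There are two places where you take a different route. For the boundary behaviour in Part~3, the paper obtains the quantitative bound $L(\gamma)\le \gamma\mu-\ln\gamma+C$ (using $a\le C_1$) and then explicitly computes $I(s)\ge \ln\frac{1}{\mu-s}-\tilde C\to\infty$ as $s\to\mu^-$; your soft argument via $I(\mu)=+\infty$ plus lower semicontinuity is cleaner and equally valid. For the limit \eqref{gti}, you deduce it from the asymptotic $I(s)=bs^p(1+o(1))$, which requires condition~${\bf A}_p$; the paper instead proves it directly from the fact that for $p>1$ the cumulant generating function $L$ is finite on all of $\mathbb R$, via $I(s)\ge Ns-L(N)$ for arbitrary $N$. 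This distinction matters downstream: the paper later invokes \eqref{gti} in the proof of Lemma~\ref{LD-bis} for $p>1$ \emph{without} assuming ${\bf A}_p$, so the hypothesis-free version is what is actually needed there. As a proof of the proposition exactly as stated (where \eqref{gti} sits inside Part~2, under ${\bf A}_p$), your argument is fine; but be aware that the paper implicitly relies on the stronger, ${\bf A}_p$-free version.
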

\begin{proof}
1. By the definition of $I(s)$ considering the smoothness of $L(\gamma)$ in the vicinity of zero we have
$I(s) = s\gamma^{\star} - L(\gamma^\star)$,
where $\gamma^\star = \gamma^\star(s)$ is the solution of equation $s=L'(\gamma)$.  Using the Taylor decomposition for $L'(\gamma)$ about zero by the implicit function theorem we obtain $\gamma^\star = \frac{s}{L''(0)} (1+o(1)) = \frac{s}{\sigma} (1+o(1))$ for small enough $s$. Consequently, %using that $L(0) = L'(0) = 0, \; L''(0) = \sigma^2$ we have
$$
I(s) = s\gamma^{\star} - L(\gamma^\star) = \frac{s^2 (1+o(1))}{\sigma} - \frac12 {\gamma^\star}^2 L''(0)   = \frac{s^2}{2 \, \sigma} (1+o(1)).
$$
\medskip

2. In the case $p=1$ conditions \eqref{lt} and \eqref{two-sided-bis} on the distribution $a(x)$ imply that $\Lambda(b_1) = \infty$ for any $b_1>b$, and $\Lambda(b')$ is finite for all $0<b'<b$. Therefore, for $s \ge 0$
\begin{equation}\label{propI}
I(s)< b s, \quad I'(s) \le b, \mbox{ and } \quad \lim\limits_{s \to \infty} I'(s) \le b.
\end{equation}
The last limit exists since $I'(s)$ is monotone and bounded. If we assume that $\lim\limits_{s \to \infty} I'(s) =a < b$, then taking $\beta=\frac{a+b}{2}$ we get
\begin{equation}\label{contr}
I(s) = \sup\limits_{\gamma<b}(\gamma s - L(\gamma)) >  \beta s - L(\beta) = \beta s (1+ o(1)) \quad\mbox{ as } \; s \to \infty.
\end{equation}
On the other hand, if  $\lim\limits_{s \to \infty} I'(s) =a< \beta$, then $I(s)< a s (1+ o(1))$ as $s \to \infty$, which  contradicts \eqref{contr}. Thus $\lim\limits_{s \to \infty} I'(s) = b$, and the first formula in \eqref{Iinfty} follows.

In the case $p>1$ due to \eqref{two-sided_p-bis}
 %using the Laplace method for estimation of $\Lambda(\gamma) =
%\int e^{\gamma x - b|x|^p} dx$ for large enough $\gamma$, we obtain that
%$$
%\Lambda(\gamma) = e^{c_p \gamma^{\frac{p}{p-1}} + o(\gamma)}, \; c_p =
%\frac{p-1}{p}\frac{1}{(bp)^{\frac{1}{p-1}}}, \quad L(\gamma) = \ln \Lambda (\gamma) = c_p
%\gamma^{\frac{p}{p-1}} + o(\gamma), \quad \mbox{ as } \; \gamma \to \infty.
%$$
%Consequently,
the solution $\gamma^\star$ of equation $L'(\gamma) = s$ has the asymptotics $\gamma^\star = b p \, s^{p-1}(1+o(1)) $ as $s \to \infty $, and thus
$$
I(s) = \sup\limits_\gamma ( s \gamma - L(\gamma)) =  s \gamma^\star - L(\gamma^\star) = b s^p (1+o(1)).
$$
Limit relation \eqref{gti} follows from the fact that for $p>1$ the function $L(\gamma)$ is finite for all $\gamma\in\mathbb R$.
Then for any $N>0$
$$
I(x)=\sup\limits_\gamma(s\gamma-L(\gamma))\geq sN-L(N),
$$
and thus $\liminf\limits_{s\to\infty}\frac{I(s)}{s}\geq N$, which yields \eqref{gti}.

\medskip
3. Since $\mathrm{supp} \ a \subset [-\mu, \mu]$, then, for $\gamma \ge 0$, $\Lambda (\gamma) = \int e^{\gamma x} a(x) dx \le  e^{\gamma \mu}$. Consequently,
$L(\gamma) \le \gamma \mu$, and for $s> \mu$ we have
$$
I(s) = \sup\limits_\gamma (\gamma s - L(\gamma)) \ge \sup\limits_\gamma (s- \mu)\gamma = \infty.
$$
On the other hand, since $\mu = \sup {\rm supp}\, a$, for $\gamma \ge 0$ and for any $\delta>0$
$$
\Lambda (\gamma) = \int e^{\gamma x} a(x) dx \ge c_\delta  e^{\gamma (\mu - \delta)}
$$
for some $c_\delta>0$. Thus,
$$
L(\gamma) \ge \ln c_\delta + \gamma (\mu - \delta) \quad \mbox{ and } \quad I(s) \le \sup\limits_\gamma (s- (\mu-\delta))\gamma - \ln c_\delta < \infty,
$$
if $s< \mu-\delta$. Since we take an arbitrary $\delta>0$, then $I(s)$ is finite for all $s \in (- \mu, \mu)$. The smoothness of $I(s)$ follows from the standard convexity arguments.

It remains to prove that $I(s) \to \infty$ as $s \to \mu-0$. Since $a(x) \le C_1$, we have
$$
\Lambda (\gamma) \le C_1 \int\limits_{-\mu}^{\mu} e^{\gamma x}  dx = \frac{C_1}{\gamma} (e^{\mu \gamma} - e^{-\mu \gamma}) < \frac{C_1}{\gamma} e^{\mu \gamma}.
$$
Then
$$
L(\gamma) < - \ln \gamma  + \gamma \mu + \ln C_1,
$$
and
$$
I(s) \ge \sup\limits_\gamma \left( (s- \mu)\gamma + \ln \gamma \right) - \ln C_1 \ge  (s- \mu)\gamma^* (s) + \ln \gamma^* (s)  - \ln C_1,
$$
where $\gamma^* (s)= \frac{1}{\mu -s} $ is the argmax  of the function $(s- \mu)\gamma + \ln \gamma $. Since $\gamma^* (s)  \to \infty $, as $s \to \mu -0$, then
$$
I(s) \ge  \ln \gamma^* (s)  - \tilde C \to +\infty, \quad \hbox{as } s \to \mu -0.
$$
 The statement for negative $s$ follows from the symmetry of $a$.
\end{proof}

\medskip

Next we describe the properties of the rate function $I(r)$ in the multidimensional case.

\begin{proposition}[Multi-dimensional case]\label{I_MD}
1. For any distribution $a(x)$ satisfying  \eqref{a1}--\eqref{a2} and \eqref{lt} we have
\begin{equation}\label{I0-bis}
I(r) = \frac12 \, \sigma^{-1} r \cdot r \, (1+o(1)) \quad \mbox{ as } \; r \to 0.
\end{equation}
2. If $p=1$, and in addition to the above conditions  ${\bf A}_1$ is fulfilled,
then $I(r)$ has the following asymptotics:
\begin{equation}\label{Iinfty-bis}
% \lim\limits_{s \to \infty} I'(s) = b, \; \;
  I(r) = b |r| (1+o(1)), \qquad  \nabla I(r) = b \frac{r}{|r|} (1+ o(1)), \quad\hbox{as } |r| \to \infty,
\end{equation}
where $b$ is the same constant as in  \eqref{lt}. Moreover, $|\nabla I(r)| \le b$ for all $r \in R^d$.
%For any $\theta \in S^{d-1}$ the one-dimensional rate function $I_\theta (s), s \in R^1,$ for $S_k \cdot \theta$ has the form
%\begin{equation}\label{Itheta}
%I_\theta (s) = \min_{\{  r: \ r \cdot \theta =s \}} I(r), \quad s \in R^1,
%\end{equation}
%and in particular, $I_\theta (s) \le I(s \theta)$.
%The functions $I_\theta(s)$ and $I'_\theta(s)$ have the following asymptotics
%\begin{equation}\label{Itheta-asymp}
%I_\theta(s) = bs (1+ o(1)), \qquad I'_\theta(s) \to b \quad \mbox { as } \; s \to \infty.
%\end{equation}

If $p>1$, and in addition to \eqref{a1}--\eqref{a2} and\eqref{lt} the function $a(x)$ satisfies condition ${\bf A_p}$,
then
%$I(r)$ has the following asymptotics:
\begin{equation}\label{Iinfty-bisbis}
% \lim\limits_{s \to \infty} I'(s) = b, \; \;
I(r) =  b |r|^p (1+o(1)), \qquad \mbox{ as } \; |r| \to \infty.
\end{equation}
3. If \eqref{boundedsupp} holds, then $I(r)$ is a smooth function in the interior of the convex hull $\cal A$. Moreover,
$I(r) \to \infty$ as $ {\rm dist}(r, \partial {\cal A}) \to 0$, and $I(r) = \infty$ for all $r \in R^d \backslash {\cal A}$.

\end{proposition}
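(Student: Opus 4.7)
The plan is to adapt the one-dimensional argument of Proposition \ref{I}, working throughout with the Legendre duality $I(r)=\gamma^\star(r)\cdot r-L(\gamma^\star(r))$ where $\gamma^\star(r)$ is the maximizer in \eqref{I(r)}, characterized in the interior of the effective domain of $L$ by $\nabla L(\gamma^\star)=r$, with $\nabla I(r)=\gamma^\star(r)$. Under \eqref{a1}--\eqref{a2} and \eqref{lt}, the cumulant generating function $L$ is smooth on the interior of its effective domain and satisfies $L(0)=0$, $\nabla L(0)=0$ (by the symmetry of $a$), and $\nabla^2 L(0)=\sigma$. For part 1, apply the implicit function theorem to $\nabla L(\gamma)=r$ at $\gamma=0$: positive definiteness of $\sigma$ yields $\gamma^\star(r)=\sigma^{-1}r+o(|r|)$, and substituting into the duality formula together with a second-order Taylor expansion of $L$ gives \eqref{I0-bis}.

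For part 2 with $p=1$, the key step is identifying the effective domain of $L$. Condition ${\bf A}_1$ implies $\Lambda(\gamma)=\infty$ for every $|\gamma|>b$ (take $\theta=\gamma/|\gamma|$, $b_1=|\gamma|$ in \eqref{two-sided-bis}), so $\mathrm{dom}\,L\subset \bar B(0,b)$, forcing $|\gamma^\star(r)|\le b$ and hence $|\nabla I|\le b$ on $\mathbb R^d$. Testing \eqref{I(r)} with $\gamma=(b-\varepsilon)r/|r|$ and using continuity of $L$ on the compact set $\{|\gamma|\le b-\varepsilon\}$ gives $I(r)\ge(b-\varepsilon)|r|-O(1)$; Cauchy-Schwarz combined with $L\ge L(0)=0$ gives the matching upper bound $I(r)\le b|r|$, so $I(r)=b|r|(1+o(1))$. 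For the gradient direction, write $\gamma^\star=|\gamma^\star|\theta^\star$: the identity $|\gamma^\star|(\theta^\star\cdot r/|r|)|r|-L(\gamma^\star)=b|r|(1+o(1))$ combined with $|\gamma^\star|\le b$ and $L\ge 0$ forces, after dividing by $|r|$, both $|\gamma^\star|\to b$ and $\theta^\star\cdot r/|r|\to 1$, yielding \eqref{Iinfty-bis}. For $p>1$ under ${\bf A}_p$, $\mathrm{dom}\,L=\mathbb R^d$ and $L(\gamma)=C(b,p)|\gamma|^{p/(p-1)}(1+o(1))$; testing with $\gamma=bp|r|^{p-2}r$, the algebraic identity $C(b,p)(bp)^{p/(p-1)}=b(p-1)$ gives $\gamma\cdot r-L(\gamma)=bp|r|^p-b(p-1)|r|^p(1+o(1))=b|r|^p(1+o(1))$, and the matching upper bound follows from $\sup_\gamma(\gamma\cdot r-L(\gamma))\le\sup_\rho(\rho|r|-\inf_{|\gamma|=\rho}L(\gamma))$ together with the uniform radial asymptotic of $L$.

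For part 3, if $r\notin\mathcal A$ the Hahn-Banach theorem produces $\theta\in S^{d-1}$ with $\theta\cdot r>\alpha:=\sup_{x\in\mathcal A}\theta\cdot x$; plugging $\gamma=t\theta$ into \eqref{I(r)} and using $L(t\theta)\le t\alpha+O(1)$ (from $a\in L^\infty$ with support in $\mathcal A$) gives $I(r)\ge t(\theta\cdot r-\alpha)+O(1)\to\infty$, so $I(r)=\infty$. For $r\in\mathrm{int}\,\mathcal A$, the bound $L(\gamma)\le h_{\mathcal A}(\gamma)+O(1)$ (with $h_{\mathcal A}$ the support function of $\mathcal A$) combined with $h_{\mathcal A}(\gamma)\ge\gamma\cdot r+\delta|\gamma|$, valid whenever $B(r,\delta)\subset\mathcal A$, forces $\gamma\cdot r-L(\gamma)\le-\delta|\gamma|+O(1)\to-\infty$, so the supremum is attained and $I(r)<\infty$; smoothness of $I$ on $\mathrm{int}\,\mathcal A$ then follows from classical Legendre-Fenchel duality using the strict convexity and smoothness of $L$ on $\mathbb R^d$. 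For $I(r)\to\infty$ as $\mathrm{dist}(r,\partial\mathcal A)\to0$, I would refine this estimate by integrating $a$ against the exponential over the half-space determined by a supporting hyperplane, which gives the sharpened bound $L(t\nu)\le h_{\mathcal A}(t\nu)-\ln t+O(1)$ (the multi-dimensional analogue of $\Lambda(\gamma)\le(C_1/|\gamma|)e^{\mu|\gamma|}$ from the 1-D proof), and then optimize in $t$ along the inward normal at the nearest boundary point to obtain $I(r)\ge\ln(1/\mathrm{dist}(r,\partial\mathcal A))+O(1)$.

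I expect the main obstacle to be the gradient asymptotic \eqref{Iinfty-bis} in the $p=1$ case: unlike the 1-D setting, monotonicity of $I'$ is no longer available and one must control both the modulus and the direction of $\gamma^\star(r)$ simultaneously. The Cauchy-Schwarz saturation argument above handles this by exploiting the exact matching of $I(r)\sim b|r|$ with the a priori constraint $|\gamma^\star|\le b$. A secondary technical point, needed for the $p>1$ upper bound, is ensuring that the $o(1)$ error in condition ${\bf A}_p$ is uniform in direction on the unit sphere; this should follow from convexity of $L$ combined with the scalar radial asymptotic.
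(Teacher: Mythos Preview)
Your treatment of parts 1 and 2 is correct and matches the paper's approach closely. For the $p=1$ gradient asymptotic your Cauchy--Schwarz saturation argument is a clean direct variant of the paper's reduction to rays; both exploit that $\gamma^\star\cdot r/|r|\to b$ together with $|\gamma^\star|\le b$ forces the full vector convergence.

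There is, however, a genuine error in part 3 where you show $I(r)<\infty$ for $r\in\mathrm{int}\,\mathcal A$. You invoke the \emph{upper} bound $L(\gamma)\le h_{\mathcal A}(\gamma)+O(1)$ and then claim it yields $\gamma\cdot r-L(\gamma)\le-\delta|\gamma|+O(1)$. The inequalities do not chain: an upper bound on $L$ gives only a \emph{lower} bound on $\gamma\cdot r-L(\gamma)$, which is useless for bounding the supremum. What you need is a lower bound on $L$. The correct estimate (the multi-dimensional analogue of $\Lambda(\gamma)\ge c_\delta e^{\gamma(\mu-\delta)}$ from Proposition~\ref{I}) is that for every $\varepsilon>0$ there is $C_\varepsilon$ with
\[
L(\gamma)\ \ge\ (1-\varepsilon)\,h_{\mathcal A}(\gamma)-C_\varepsilon,
\]
which follows because $\mathcal A$ is the convex hull of $\mathrm{supp}\,a$, so $a$ carries mass in every half-space $\{x:\theta\cdot x>h_{\mathcal A}(\theta)-\varepsilon\}$. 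Combined with $h_{\mathcal A}(\gamma)\ge\gamma\cdot r+\delta|\gamma|$ (valid when $B(r,\delta)\subset\mathcal A$), this gives
\[
\gamma\cdot r-L(\gamma)\ \le\ \gamma\cdot r-(1-\varepsilon)(\gamma\cdot r+\delta|\gamma|)+C_\varepsilon\ \le\ \big(\varepsilon|r|-(1-\varepsilon)\delta\big)|\gamma|+C_\varepsilon\ \to\ -\infty
\]
for $\varepsilon$ small enough, which is what you wanted. The paper packages the same idea via the biconjugate identity $G^{**}=G$ applied to the indicator of $\mathcal A$, using the asymptotic $L(\gamma)=h_{\mathcal A}(\gamma)(1+o(1))$ from both sides. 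Once you reverse this inequality, the rest of your part 3 (Hahn--Banach for $r\notin\mathcal A$, the refined bound $L(t\nu)\le h_{\mathcal A}(t\nu)-\ln t+O(1)$ for the boundary blow-up) is correct and in the same spirit as the paper.
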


\begin{proof}
The proof of this proposition is mostly based on the same arguments as  the proof of Proposition \ref{I}.

1. Using the Taylor decomposition for $ L(\gamma)$ about zero we obtain as above $\gamma^\star = (\nabla \nabla L(0))^{-1} r (1+o(1))$ for small enough $r$. Consequently,
$$
I(r) = (\nabla \nabla L(0))^{-1}r \cdot r - \frac12 \nabla \nabla L(0) \gamma^\ast \cdot \gamma^\ast + o(r^2) =  \frac12 \, (\nabla \nabla L(0))^{-1}r \cdot r + o(r^2) = \frac12 \,\sigma^{-1}r \cdot r + o(r^2),
$$
since $\sigma = \nabla \nabla L(0)$, and the asymptotics \eqref{I0-bis} follows.

2. In the case $p=1$ conditions \eqref{lt} and \eqref{two-sided-bis} on the distribution $a(x)$ imply that for any $\theta \in S^{d-1}$
$$
\Lambda(b_1 \theta) = \mathbb{E} e^{b_1 \theta \cdot X} = \infty, \quad \mbox{ if } b_1>b \qquad \mbox{ and } \quad  \Lambda(b_1 \theta)< \infty  \quad \mbox{ if } b_1<b.
$$
Therefore,
\begin{equation}\label{I(r)P6}
I(r) = \sup_{\gamma \in R^d} (r \cdot \gamma - L(\gamma)) = \sup_{|\gamma| \le b} (r \cdot \gamma - L(\gamma)) <  |r| b.
\end{equation}
The function $I(s \theta)$ is a convex function of $s \in R^1$ for any $\theta \in S^{d-1}$. Consequently,  \eqref{I(r)P6} implies inequality
\begin{equation}\label{mitlioc}
|\nabla I(r)| \le b \qquad \forall r \in R^d.
\end{equation}
In the same way as in Proposition \ref{I} using the convexity of  $I(s \theta)$ we obtain %\eqref{Iinfty-bis}:
$$
b|r| (1+ o(1)) \le I(r) \le b|r|, \qquad \mbox{ and } \quad \big( \nabla I(r) \cdot \frac{r}{|r|} \big) \to b \quad \mbox{ as } \; |r| \to \infty.
$$
%Asymptotic relations \eqref{Itheta-asymp} are direct consequence of \eqref{Itheta} and Proposition \ref{I}.
Combining the last relation with \eqref{mitlioc} we obtain the second equality in  \eqref{Iinfty-bis}.

In the case $p>1$ considering the convexity of $L(\gamma)$ with the help of the implicit function theorem we get that the solution $\gamma^\ast \in R^d$ of equation $\nabla L(\gamma) = r$ has the asymptotics
$$
\gamma^\ast = b p |r|^{p-2} r (1+ o(1)) \quad \mbox{ as } \; r \to \infty.
$$
This implies \eqref{Iinfty-bisbis}.

3. Denote by $G(r)$ the following auxiliary function:
$$
G(r) \ = \left\{
\begin{array}{l}
0, \qquad r \in {\cal A}, \\
+ \infty, \quad r \not \in {\cal A}.
\end{array}
\right.
$$
Then the Legendre transform of $G$ is equal to $G^*(\gamma) = \mu ( \frac{\gamma}{|\gamma|} ) \, |\gamma|$, where
$$
\mu(\theta) = \sup_{r \in {\cal A}} r \cdot \theta = \sup_{r \in {\rm supp} \, a} r \cdot \theta, \quad \theta \in S^{d-1}.
$$
In the same way as in the proof of Proposition \ref{I} one can show that
\begin{equation}\label{b1}
L(\gamma) = \ln \mathbb{E} \, e^{\gamma \cdot X} = \mu ( \frac{\gamma}{|\gamma|} ) \, |\gamma| (1+ o(1)), \quad |\gamma| \to \infty,
\end{equation}
and moreover,
\begin{equation}\label{b2}
L(\gamma) \le \mu ( \frac{\gamma}{|\gamma|} ) \, |\gamma| - \ln |\gamma| +C
\end{equation}
for some constant $C$.

Since $G^{* *}(r) = G(r)$, comparing \eqref{b1} with $G^*(\gamma)$ we conclude that $I(r) = + \infty$ in $R^d \backslash {\cal A}$ and $I(r) < \infty$ for $r$  in the interior of $\cal A$. The fact that $I(r) \to \infty$ as $dist \, (r, \partial {\cal A}) \to 0$ can be justified  in the same way as in the proof of Proposition \ref{I} using inequality \eqref{b2}.
\end{proof}

\begin{lemma}\label{Xi}
Let $a(x)$ satisfy \eqref{a1}--\eqref{a2} and  \eqref{lt}. %Assume also that in the case  $p=1$ condition ${\bf A_1}$ holds.
Then for any $r \in R^d \backslash \{0\}$ equation \eqref{eqeq} has a unique solution $\xi_r$ and $0< \xi_r < 1$.

\end{lemma}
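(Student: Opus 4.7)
The plan is to study the auxiliary function
$$F(\xi) := I(\xi r) - \xi r \cdot \nabla I(\xi r) - \ln \xi,$$
defined on the set of $\xi > 0$ for which $\xi r$ lies in the interior of the effective domain of $I$, and to verify that $F$ is strictly decreasing, that $F(\xi) \to +\infty$ as $\xi \to 0^+$, and that $F$ becomes negative somewhere in $(0,1]$. Together with the intermediate value theorem, these three facts will produce the unique root $\xi_r \in (0,1)$. Observe that $F(\xi) = g(\xi) - \xi g'(\xi) - \ln\xi$ where $g(\xi) := I(\xi r)$, so the first two terms form the value at $0$ of the tangent line to the convex one-variable function $g$.

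The monotonicity follows from the elementary computation
$$F'(\xi) = -\xi\,\bigl(r \cdot D^2 I(\xi r)\, r\bigr) - \frac{1}{\xi},$$
in which the two occurrences of $r \cdot \nabla I(\xi r)$ cancel. Since $L(\gamma)$ is smooth and strictly convex in a neighbourhood of the origin with Hessian $\sigma = D^2 L(0) > 0$ (by \eqref{a1}--\eqref{a2}), Legendre duality gives smoothness and strict convexity of $I$ on the interior of its effective domain; in particular $r \cdot D^2 I(\xi r) r > 0$ for $r \neq 0$, so $F'(\xi) < 0$ throughout. The behaviour at $0$ comes directly from the local expansion $I(\xi r) = \tfrac12 (\sigma^{-1} r, r) \xi^2 (1+o(1))$ of \eqref{I0-bis}, which makes both $I(\xi r)$ and $\xi r \cdot \nabla I(\xi r)$ of order $\xi^2$ while $-\ln\xi \to +\infty$.

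To produce the sign change, I would use that the convexity of $I$ together with $I(0) = 0$ yields the supporting hyperplane inequality $0 = I(0) \geq I(r) + \nabla I(r)\cdot(0 - r)$, i.e.\ $I(r) - r \cdot \nabla I(r) \leq 0$, with strict inequality for $r \neq 0$ by strict convexity. Hence $F(1) < 0$ whenever $r$ lies in the interior of the effective domain of $I$. Combined with strict monotonicity and $F(0^+) = +\infty$, this produces a unique root $\xi_r \in (0,1)$.

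The main obstacle is the bounded-support case \eqref{boundedsupp}, where $r$ need not belong to $\mathcal{A}$ and $F(1)$ may even be undefined. There I would argue that there exists a threshold $\xi_* \in (0,1]$ such that $\xi r \in \mathrm{int}(\mathcal{A})$ for $\xi \in (0,\xi_*)$ while $I(\xi r) \to +\infty$ as $\xi \uparrow \xi_*$ by Proposition \ref{I_MD}(3). A short Gronwall-type argument then shows that the tangent intercept $g(\xi) - \xi g'(\xi)$ necessarily tends to $-\infty$: if it stayed bounded below by $-M$, then $\xi g'(\xi) \leq g(\xi) + M$ on $[\xi_*/2, \xi_*)$, which by integration precludes the blow-up of $g$. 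Consequently $F(\xi) \to -\infty$ as $\xi \uparrow \xi_*$, and by strict monotonicity the unique zero $\xi_r$ lies in $(0,\xi_*) \subset (0,1)$, completing the proof in all cases.
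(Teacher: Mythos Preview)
Your proof is correct and follows essentially the same approach as the paper's: both arguments compare the increasing function $\ln\xi$ with the non-increasing function $I(\xi r)-\xi r\cdot\nabla I(\xi r)$ (equivalently, study the strict monotonicity of their difference $F$), use $I(0)=0$ to locate the root in $(0,1)$, and treat separately the case where the ray $\{\xi r\}$ hits $\partial\mathcal A$. The only stylistic difference is in this last case: you bound the tangent intercept via a Gronwall-type integration of $\xi g'(\xi)\le g(\xi)+M$, whereas the paper shows directly from convexity that $g'(\xi)/g(\xi)\to\infty$ as $\xi\uparrow\xi_*$, which forces $g(\xi)-\xi g'(\xi)\to-\infty$; both routes yield the same conclusion.
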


\begin{proof}
If the convex hull $\cal A$ of supp $a$ coincides with $R^d$, then differentiating the right-hand side of \eqref{eqeq} in $\xi$ we obtain
$$
r \cdot \nabla I (\xi r ) - r \cdot \nabla I (\xi r ) - \xi r \cdot \nabla \nabla I (\xi r ) r = - \xi r \cdot \nabla \nabla I (\xi r ) r \le 0
$$
because of  convexity of $I$; here $\nabla \nabla$ denotes the Hessian. Moreover, for sufficiently small $\xi$ we have
 $-  r \cdot \nabla \nabla I (\xi r ) r < 0$.
Thus the function on the right-hand side of \eqref{eqeq} is  decreasing in $\xi$, and, since $I(0)=0$, we immediately conclude that  \eqref{eqeq} has a unique solution and  $0< \xi_r < 1$.

If ${\cal A} \neq R^d$, but the ray $\{ sr\}_{s \ge 0}$ lies inside $\cal A$, then we can use the same arguments as above. If the ray $\{ sr\}_{s \ge 0}$ intersects $\partial {\cal A} $ at a point $s^*_r$, then it follows from Proposition \ref{I_MD} that $I(sr) \to \infty$ as $s \to s^*_r -0$.
In addition, the convexity of $I(r)$ and the Newton-Leibniz formula imply
$$
\lim_{s \to s^*_r -0} \frac{\frac{d}{ds} I(sr)}{I(sr)} = \infty.
$$
Consequently,
$$
I(sr) - sr \cdot \nabla I(sr) = I(sr) - s \frac{d}{ds} I(sr) \to - \infty \quad \mbox{ as } \; s \to s^*_r -0,
$$
and again we obtain the unique solution $0< \xi_r <1$ of equation \eqref{eqeq}.
\end{proof}

\begin{proposition}
The function $\Phi(r)$ is a convex function, $\Phi(0) = 0$, and $\Phi (r)>0$ for any $r~\in~R^d~\backslash~\{0\}$. Moreover, if $a(x)$ satisfies \eqref{lt} with $p\geq1$ and, in the case $p=1$, also condition ${\bf A_1^s}$, then $\Phi$ is strictly convex: $\nabla \nabla \Phi(r) r \cdot r>0$.
\end{proposition}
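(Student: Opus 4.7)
My plan is to express $\Phi$ as a partial minimization of a jointly convex function, from which the convexity properties will follow almost automatically. Introduce
\[
F(s,r) := 1 - s + s \ln s + s\,I(r/s), \qquad s>0,\ r\in \mathbb R^d,
\]
and check, via the substitution $s=1/\xi$, that $\partial_s F(s,r)=0$ is exactly \eqref{eqeq} and that $F(1/\xi,r)$ is exactly the expression for $\Phi(r)$ in \eqref{Phi(r)}. Lemma \ref{Xi} then yields a unique critical point $s^*(r)=1/\xi_r$, and once convexity of $F(\cdot,r)$ is established this critical point is the global minimum, giving the representation $\Phi(r)=\min_{s>0} F(s,r)$.

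The easy properties will then follow from the splitting $F(s,r)=h(s)+sI(r/s)$ with $h(s):=1-s+s\ln s$, using that $h\geq 0$ vanishes only at $s=1$ and (by the quadratic asymptotic \eqref{I0-bis}) $I\geq 0$ vanishes only at $0$: indeed $\Phi(0)=\min_s h(s)=0$, and for $r\neq 0$ we have $F(1,r)=I(r)>0$ while $F(s,r)\geq h(s)>0$ for $s\neq 1$, so $\Phi(r)>0$. For plain convexity, I will use that $sI(r/s)$ is the perspective of the convex function $I$, hence jointly convex on $\{s>0\}\times\mathbb R^d$ by the standard epigraph argument, and $h''(s)=1/s>0$ makes $h$ convex; thus $F$ is jointly convex and $\Phi=\min_s F(s,\cdot)$ inherits convexity as the partial minimum of a jointly convex function.

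For strict convexity I will propagate strict convexity of $h$ and of $I$ through the partial-minimization step. Assuming for contradiction that $\Phi(tr_1+(1-t)r_2)=t\Phi(r_1)+(1-t)\Phi(r_2)$ with $r_1\neq r_2$ and $t\in(0,1)$, and setting $s_i:=s^*(r_i)$, joint convexity of $F$ forces $F$ to be affine on the segment from $(s_1,r_1)$ to $(s_2,r_2)$; strict convexity of $h$ then gives $s_1=s_2=:s$, reducing the claim to affinity of $y\mapsto I(y)$ on $[r_1/s,r_2/s]$. The main obstacle, and the reason condition ${\bf A^s_1}$ is imposed, will be verifying that $I$ is strictly convex on the interior of its effective domain: this is equivalent to essential smoothness of $L=\ln\Lambda$, which is automatic for $p>1$ since \eqref{two-sided_p-bis} yields $\mathrm{dom}\,L=\mathbb R^d$ with $|\nabla L|\to\infty$ at infinity, while for $p=1$ condition ${\bf A^s_1}$ is exactly what forces $|\nabla L(\gamma)|\to\infty$ as $\gamma$ approaches the boundary of $\mathrm{dom}\,L$ (by monotone convergence applied to $(X\cdot\theta)^+ e^{c\theta\cdot X}$). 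Since $\xi_{r_i}\in(0,1)$ by Lemma \ref{Xi}, the points $r_i/s=r_i\xi_{r_i}$ lie in the interior of $\mathrm{dom}\,I$, so strict convexity of $I$ contradicts the asserted affinity, completing the proof.
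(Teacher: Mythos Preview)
Your proof is correct and takes a genuinely different route from the paper. The paper proceeds by direct differentiation: using the constraint \eqref{eqeq} it computes $\nabla\Phi(r)=\nabla I(\xi_r r)$, then differentiates \eqref{eqeq} implicitly to obtain a relation for $\nabla\xi(r)$ and deduces $\nabla\nabla\Phi(r)\,r\cdot r\ge 0$ (with strict inequality under the extra hypotheses). Your approach instead exhibits the variational structure $\Phi(r)=\min_{s>0}\big[h(s)+sI(r/s)\big]$, recognizes $sI(r/s)$ as the perspective of the convex function $I$, and invokes the standard fact that infimal projection of a jointly convex function is convex.

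What each approach buys: your argument delivers full convexity of $\Phi$ on $\mathbb R^d$ immediately, whereas the paper's computation literally only checks the radial second-order condition $\nabla\nabla\Phi(r)\,r\cdot r\ge 0$, which by itself is weaker than convexity. Your strict-convexity step (equality in the infimal projection forces $F$ affine on a segment, hence $h$ affine so $s_1=s_2$, hence $I$ affine on a segment in $\mathrm{int}\,\mathrm{dom}\,I$) is also cleaner than tracking signs through an implicit differentiation. On the other hand, the paper's route produces the useful explicit identity $\nabla\Phi(r)=\nabla I(\xi_r r)$ as a by-product.

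One small correction: in the $p>1$ case you invoke \eqref{two-sided_p-bis} (condition ${\bf A}_p$), but the Proposition does not assume it. This is harmless, since \eqref{lt} with $p>1$ already gives $\mathrm{dom}\,L=\mathbb R^d$ (the integrand $e^{\gamma\cdot x-b|x|^p}$ is integrable for every $\gamma$) and $L$ is $C^\infty$ there by dominated convergence; hence $L$ is essentially smooth with no boundary behaviour to check, and $I=L^*$ is strictly convex on $\mathrm{int}\,\mathrm{dom}\,I$ as you need.
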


\begin{proof}
If $r=0$, then \eqref{eqeq} implies that $\xi_0 = 1$, and $\Phi (0)=0$.
Let us show that $\nabla \Phi(r) \cdot r>0$ for any $r \in R^d \backslash \{0\}$.
Indeed,  $\Phi(r) = \Phi(\xi(r))$ with $\xi(r) = \xi_r$, then using  \eqref{eqeq} and considering the properties of  $I(r)$ we have
$$
\nabla \Phi(r) = \frac{\nabla \xi(r)}{ \xi^2(r)} \big[ \ln \xi(r) - I( \xi(r) r) +  \xi(r) r\cdot\nabla I(\xi(r) r)  \big] +  \nabla I(  \xi(r) r) = \nabla I(\xi(r) r).
$$
Consequently,
$\nabla \Phi(r)\cdot r = \nabla  I(\xi(r) r)\cdot  r >0$ and $\Phi (r)>0$ for any $r~\in~R^d~\backslash~\{0\}$.

To prove the convexity of $\Phi$ we differentiate equation \eqref{eqeq} in $r$ and obtain
%\begin{equation}\label{diffeinr}
$$
\nabla \xi (r)= - \xi^2(r) \nabla \nabla I(\xi(r) r)\,r\big[\xi(r)+ r\cdot \nabla \xi(r)\big].
$$
%\end{equation}
The assumption $\nabla \xi(r) \cdot r > 0$ leads to a contradiction. Therefore, $\nabla \xi(r)\cdot r \le 0$ and $\xi(r)+ r \cdot\nabla \xi (r) \ge 0$ for all $r \in R^d$. This yields  the inequality  $\nabla \nabla \Phi(r) r \cdot r\ge 0$.
Additionally,  $\nabla \nabla I(r) r \cdot r>0$ and  $\xi(r)+ r\cdot\nabla \xi (r) >0$ in the case $p>1$ or $p=1$ under condition ${\bf A_1^s}$. This yields a strict convexity of $\Phi$.
\end{proof}

\begin{proposition}[Skewed distribution]\label{DR}
1. Let distribution $a(x)$ satisfy \eqref{a1}--\eqref{a2} and \eqref{lt}. Assume also that in the case $p=1$ condition $\bf A^s_1$ is fulfilled and in the case $p>1$ the following condition holds:
\begin{equation}\label{N0}
\int\limits_{x \cdot \theta >N} a(x) dx >0 \quad \mbox{for any } \; N>0 \; \mbox{ and any } \; \theta \in S^{d-1}.
\end{equation}
Then for any $x^\ast \in R^d$ equation
\begin{equation}\label{xast}
\nabla L (\gamma) = x^\ast
\end{equation}
 has a unique solution $\gamma^\ast \in R^d$ and, furthermore,  the following relations hold
\begin{equation}\label{N1}
I(x^\ast) = x^\ast\cdot \gamma^\ast - L(\gamma^\ast)
\end{equation}
\begin{equation}\label{N2}
\frac{1}{\Lambda(\gamma^\ast)} \int x a(x) e^{\gamma^\ast\cdot x} dx = x^\ast.
\end{equation}
Moreover, denoting $a_{\gamma}(x) = \frac{a(x) e^{\gamma\cdot x}}{\Lambda(\gamma)}$ we get
\begin{equation}\label{N3}
a^{\ast k}(k x^\ast) = a^{\ast k}_{\gamma^\ast}(k x^\ast) e^{-I(x^\ast) k}.
\end{equation}

2. If distribution $a(x)$ satisfies \eqref{lt} (or \eqref{boundedsupp}), then for any small enough $x^\ast \in R^d$  equation \eqref{xast} has a unique solution $\gamma^\ast$, and relations \eqref{N1}--\eqref{N3} hold.

\end{proposition}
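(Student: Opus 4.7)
The main ingredients will be the convex-analytic surjectivity of $\nabla L$ under the stated tail hypotheses together with the standard exponential-tilting identity for convolutions. The function $L=\ln\Lambda$ is smooth on the interior of its effective domain $\mathcal{D}_L$ and strictly convex there, the Hessian $\nabla\nabla L(\gamma)$ being the covariance matrix of the tilted probability measure $a_\gamma$, which is positive definite because $a_\gamma$ is not supported in any hyperplane. Granted existence of $\gamma^\ast\in\mathrm{int}\,\mathcal{D}_L$ solving $\nabla L(\gamma^\ast)=x^\ast$, uniqueness is immediate from strict convexity, identity \eqref{N2} is nothing but the expression $\nabla L(\gamma)=\Lambda(\gamma)^{-1}\int x\,a(x)e^{\gamma\cdot x}dx$ evaluated at $\gamma^\ast$, and \eqref{N1} holds because $\gamma\mapsto x^\ast\cdot\gamma-L(\gamma)$ is concave with its supremum $I(x^\ast)$ attained at the critical point $\gamma^\ast$.

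The existence step will be handled separately for $p>1$ and $p=1$. In the case $p>1$, condition \eqref{lt} gives $\mathcal{D}_L=\mathbb{R}^d$, and I would use \eqref{N0} to prove the cofiniteness $L(\lambda\theta)/\lambda\to\infty$ for every unit vector $\theta$, using the lower bound $\Lambda(\lambda\theta)\ge e^{\lambda M}\int_{x\cdot\theta>M}a\,dx$ combined with the arbitrariness of $M$. Cofiniteness makes $I=L^\ast$ finite on all of $\mathbb{R}^d$, and by Legendre--Fenchel duality $\nabla L$ becomes a diffeomorphism of $\mathbb{R}^d$ onto $\mathbb{R}^d$. In the case $p=1$, $\mathcal{D}_L$ is bounded and the new obstacle is to verify the essential smoothness of $L$, namely $|\nabla L(\gamma_n)|\to\infty$ as $\gamma_n\to\gamma_\infty\in\partial\mathcal{D}_L$. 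Condition $\mathbf{A}_1^s$ is tailored precisely for this: differentiating $L$ under the integral along a boundary ray and using $\mathbb{E}|X|e^{bX\cdot\theta}=\infty$ via monotone convergence forces the directional derivative of $L$ to blow up at the boundary. The easy bound $I(x)\le b|x|$ (argued as in Proposition \ref{I_MD}) gives $\mathrm{dom}\,I=\mathbb{R}^d$, so essential smoothness plus strict convexity yield a bijection $\nabla L:\mathrm{int}\,\mathcal{D}_L\to\mathbb{R}^d$.

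The tilting identity \eqref{N3} will then follow from the elementary formula
\[
a_\gamma^{\ast k}(y)=\Lambda(\gamma)^{-k}\,e^{\gamma\cdot y}\,a^{\ast k}(y),
\]
which I would establish by induction on $k$ (or equivalently via the characteristic-function relation $\widehat{a_\gamma}(p)=\Lambda(\gamma+ip)/\Lambda(\gamma)$). Specialising to $y=kx^\ast$, $\gamma=\gamma^\ast$ and substituting \eqref{N1} converts the exponential prefactor into $e^{-kI(x^\ast)}$, proving \eqref{N3}. For Part~2 no steepness is needed: at $\gamma=0$ one has $\nabla L(0)=0$ and $\nabla\nabla L(0)=\sigma$ positive definite by \eqref{a2}, so the implicit function theorem produces a unique local solution $\gamma^\ast(x^\ast)$ for all sufficiently small $x^\ast$, and the same calculations yield \eqref{N1}--\eqref{N3}. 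The hardest step, in my opinion, is the essential-smoothness verification in the $p=1$ case, where $\mathbf{A}_1^s$ is noticeably weaker than the alternative $\mathbf{A}_1$ used elsewhere in the paper and one has to justify carefully the interchange of limit and integration when computing $\partial_\theta L(\gamma)$ as $\gamma$ approaches $\partial\mathcal{D}_L$ along $\theta$.
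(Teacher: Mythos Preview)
Your proposal is correct and follows essentially the same route as the paper: the growth of $L$ derived from \eqref{N0}, the blow-up of $\nabla L$ at the boundary from condition $\mathbf{A}_1^s$, the elementary tilting identity for \eqref{N3}, and the implicit function theorem for Part~2 are all exactly what the paper uses. The only cosmetic difference is packaging: where you invoke Legendre--Fenchel duality and essential smoothness to conclude that $\nabla L$ is a bijection onto $\mathbb{R}^d$, the paper phrases the same conclusion as a solvability theorem for strictly monotone coercive operators (citing Lions), deriving the coercivity $\nabla L(\gamma)\cdot\gamma/|\gamma|\to\infty$ from the same lower bound $\Lambda(\gamma)\ge c(N)e^{N|\gamma|}$ that you use.
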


\begin{proof}
1. Assume first that  $p>1$. From the properties of the function $L$ it follows that $\nabla L: R^d \to R^d$ is a semicontinuous strictly monotone operator, i.e. $(\nabla L(\gamma_1)~-~\nabla L(\gamma_2))\cdot(\gamma_1~-~ \gamma_2) > 0 $. Condition \eqref{N0} implies that
$\Lambda(\gamma) > e^{N \| \gamma \|}  c(N) $ with $c(N)>0$ for any $N$, and consequently
\begin{equation}\label{frac12N}
L(\gamma) > \frac12 N \|\gamma\| \quad \mbox{for all large enough } \; \| \gamma\|.
\end{equation}
 Since $L$ is a convex function, then $\frac{\nabla L(\gamma) \cdot\gamma}{\|\gamma \|}$ is monotonically increasing. This together with \eqref{frac12N} and $L(0)=0$ imply that
$$
\lim_{\| \gamma \| \to \infty} \frac{\nabla L(\gamma)\cdot \gamma}{\|\gamma \|} = +\infty.
$$
Then the unique solution of \eqref{xast} exists by the solvability theorem for monotone operators, see e.g. \cite{lions}.
If  with $p=1$, then under condition $\bf A^s_1$ using the Lebesgue theorem we obtain that
$
\lim_{\| \gamma \| \to b-0} \frac{\nabla L(\gamma)\cdot \gamma}{\|\gamma \|} = +\infty.
$
Then we can repeat the similar arguments to prove the existence of the unique solution of \eqref{xast}.

Equallity \eqref{N1} follows from the definition \eqref{I(r)} of the function $I(r)$.
Equality \eqref{N2} follows from \eqref{xast}. Equality \eqref{N3} is a direct consequence of relation \eqref{N1}.

2. If  $x^\ast \in R^d$ is small enough, then for any distribution $a(x)$ satisfying \eqref{lt}, \eqref{boundedsupp} equation  \eqref{xast} can be solved using the implicit function theorem.
\end{proof}

\subsection{The regions of standard and moderate  deviations. Proof of Theorem \ref{LTail-1var2}}

Under our standing assumptions the local central limit theorem applies to the sum of independent random variables with a common distribution $a(x)$, see for instance  \cite[Theorem 19.1]{BhaRao}. This implies the desired asymptotics \eqref{LT1} of $v(x,t)$ in the region  $|x| \le r t^{1/2}$ with an arbitrary $r>0$.

%{\color{magenta}  Here we will quote some well-known and well-hidden  local CLT type results. To be done}.

%\subsubsection{The case  $  0<\delta<1$.}

In this subsection we show that, in the region $x = r t^{\frac{1+\delta}{2}} (1+ o(1))$ with  $0<\delta<1$ and
$r\in \mathbb R^d\setminus \{0\}$, the asymptotics \eqref{Tless1} for $v(x,t)$ holds, as $t \to \infty$.
%is universal in the sense that it does not depend on $p$, but depends only on $\delta$, $\sigma^2$ and $r$.
First we obtain the asymptotics for the $k$-th convolution power $a^{\ast k}(x)$ for large enough $k$.
%, when $k$ is large enough and $\frac{|x|}{k}=o(1)$.
Our approach essentially  relies on probabilistic arguments.

%\begin{theorem}\label{t_deltaless1}
%Let $a(x)$  satisfy \eqref{a1}--\eqref{a2}, and assume that either \eqref{lt} or \eqref{boundedsupp} holds.
%Then in the set $\{(x,t)\,:\,|x| = r t^{\frac{1+\delta}{2}}\}$ with  $0<\delta<1$  we have
%\begin{equation}\label{Tless1}
%v(x,t) \le e^{- \frac{x^2}{\sigma^2 t} + o(\frac{x^2}{t})} =  e^{- \frac{r^2}{\sigma^2 } t^\delta + o(t^\delta)} \quad \mbox{ as } \; t \to \infty.
%\end{equation}
%\end{theorem}

\begin{lemma}\label{deltaless1}
Let  conditions \eqref{a1}--\eqref{a2} be satisfied, and assume that \eqref{lt}  holds. Then
\begin{equation}\label{nconvolution1}
a^{\ast k} (x) =   e^{- \frac12\frac{\sigma^{-1}x \cdot x}{ k}(1 + o(1))}, \quad \mbox{ as } \;
 \; k \to \infty, \; \frac{|x|^2}k \to \infty, \;  \mbox{ and } \; \frac{|x|}{k}\to 0,
\end{equation}
where $\sigma $ is  the covariance matrix of the distribution $a(x)$, and $o(1)\to0$ as   $\frac{|x|^2}k \to \infty$.
%If $|x| = r t^{\frac{1 + \delta}{2}}$ with  $ 0<\delta<1$ and $k \sim t$, then %for any $p \ge 1$
%\begin{equation}\label{nconvolution2}
%a^{\ast k} (x) \le  e^{- \frac{r^2}{\sigma^2 } \frac{t^{1+\delta}}{k} + o(t^\delta)}, \quad \mbox{ as } \;  k \to \infty, \; t \to \infty.
%\end{equation}
\end{lemma}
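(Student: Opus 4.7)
\medskip

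\noindent\textbf{Plan of proof.} The strategy is the classical Cram\'er--Esscher exponential tilt followed by a local central limit theorem applied to the tilted law. The hypotheses $|x|/k\to 0$ and $|x|^2/k\to\infty$ together pinpoint the moderate deviations regime for the random walk $S_k=X_1+\dots+X_k$ whose step has density $a(\cdot)$.

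\medskip

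Given $(x,k)$ in this regime, set $x^{\ast}=x/k\to 0$. By Proposition~\ref{DR}, part 2, there is a unique small $\gamma^{\ast}=\gamma^{\ast}(x^{\ast})$ with $\nabla L(\gamma^{\ast})=x^{\ast}$; the Taylor expansion of $L$ at $0$ (using $\nabla L(0)=0$ and $\nabla\nabla L(0)=\sigma$) gives $\gamma^{\ast}=\sigma^{-1}x^{\ast}(1+o(1))$, so $\gamma^{\ast}\to 0$. The identity \eqref{N3} then reads
\begin{equation*}
a^{\ast k}(x)=a^{\ast k}_{\gamma^{\ast}}(x)\, e^{-kI(x^{\ast})}.
\end{equation*}
For the exponential factor I would invoke Proposition~\ref{I_MD}, part 1, to get
\begin{equation*}
kI(x^{\ast})=\frac{k}{2}\,\sigma^{-1}x^{\ast}\!\cdot x^{\ast}(1+o(1))=\frac{1}{2k}\,\sigma^{-1}x\!\cdot x\,(1+o(1)).
\end{equation*}

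\medskip

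The remaining task is to evaluate $a^{\ast k}_{\gamma^{\ast}}(x)$ by a local CLT. The tilted density $a_{\gamma^{\ast}}(y)=a(y)e^{\gamma^{\ast}\cdot y}/\Lambda(\gamma^{\ast})$ is a probability density with mean $x^{\ast}$ (formula \eqref{N2}) and covariance $\sigma_{\gamma^{\ast}}=\nabla\nabla L(\gamma^{\ast})$; as $\gamma^{\ast}\to 0$ this covariance tends to $\sigma$. Evaluating the $k$-fold convolution at the mean $kx^{\ast}=x$ of the tilted walk, a local central limit theorem (Bhattacharya--Rao type, cf.\ the reference already used for the standard deviations regime) yields
\begin{equation*}
a^{\ast k}_{\gamma^{\ast}}(x)=\frac{1+o(1)}{(2\pi k)^{d/2}\sqrt{\det\sigma_{\gamma^{\ast}}}}=\frac{1+o(1)}{(2\pi k)^{d/2}\sqrt{\det\sigma}}.
\end{equation*}
Combining the two displays gives a precise asymptotic with a polynomial prefactor $k^{-d/2}$. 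The logarithm of this prefactor is $O(\ln k)$, which is negligible compared with the rate $\tfrac{1}{2k}\sigma^{-1}x\!\cdot x\to\infty$ (this is exactly where the hypothesis $|x|^2/k\to\infty$ enters). Absorbing the prefactor into the $(1+o(1))$ of the exponent produces \eqref{nconvolution1}.

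\medskip

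\noindent\textbf{Main obstacle.} The delicate point is that the local CLT must be applied to a \emph{family} of densities $a_{\gamma^{\ast}}$ depending on $k$ through $\gamma^{\ast}\to 0$, and the remainder in the LCLT must be $o(1)$ uniformly along this family. This requires uniform control on the smoothness and decay of the characteristic function of $a_{\gamma^{\ast}}$ and on its third absolute moment. Fortunately, the tilt by a vanishingly small $\gamma^{\ast}$ is a mild perturbation: since $a\in L^{\infty}\cap L^{1}$, $|x|^{3}a(x)$ is integrable by \eqref{lt}, and $\hat a\in C_{b}\cap L^{2}$ with $\hat a(p)\to 0$ at infinity, the same properties hold uniformly for $a_{\gamma^{\ast}}$ in a neighbourhood of $\gamma^{\ast}=0$, which legitimizes the uniform Bhattacharya--Rao expansion. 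Verifying this uniformity (as opposed to merely invoking a fixed-density LCLT) is the only non-routine step in the argument.
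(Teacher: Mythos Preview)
Your proof is correct and follows essentially the same route as the paper: exponential tilt via Proposition~\ref{DR}, local CLT applied to the tilted density $a_{\gamma^\ast}$ at its mean $kx^\ast$, and the quadratic expansion of $I$ near $0$ from Proposition~\ref{I_MD}. Your explicit discussion of the uniformity in $\gamma^\ast$ needed for the LCLT is in fact more careful than the paper's own argument, which invokes the local limit theorem without commenting on this point.
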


\begin{proof}
Let $x^\ast = \frac{x}{k}$. Then $x^\ast \to 0$ as $|x| \to \infty, \ k \to \infty$, and using Proposition \ref{DR} we conclude that
the equation $\nabla L(\gamma) = x^\ast$ has a unique solution $\gamma^\ast = \gamma^\ast (x,k)$, where  $\gamma^\ast \to 0$ as $x^\ast \to 0$. Relation \eqref{N3} implies
\begin{equation}\label{akgamma1}
 a^{\ast k}(x) =  a^{\ast k}(k x^\ast) = a^{\ast k}_{\gamma^\ast}(k x^\ast) e^{-I(x^\ast) k} = a^{\ast k}_{\gamma^\ast}(k x^\ast) e^{-I(\frac{x}{k}) k}.
\end{equation}
It follows from \eqref{N2} and the local limit theorem for the sum of i.i.d. random variables with the common distribution $a_{\gamma^\ast}$ that
\begin{equation}\label{akgamma2}
a^{\ast k}_{\gamma^\ast} (k x^\ast) = \frac{\rm{det} \, \sigma_{\gamma^\ast}^{-1}}{(2 \pi k)^{d/2}}(1+o(1)) = \frac{\rm{det} \, \sigma^{-1}}{(2 \pi k)^{d/2}} (1+ o(1)), \quad \mbox{as } \; x^\ast \to 0 \; \; (\mbox{and} \ \gamma^\ast \to 0  ),
\end{equation}
where $\sigma$ is the covariance matrix for $a(x)$.

Finally using the asymptotic relation \eqref{I0-bis} for $I(\frac{x}{k})$ we obtain \eqref{nconvolution1} from \eqref{akgamma1} -- \eqref{akgamma2}.
\end{proof}

\begin{corollary}
If $x = r t^{\frac{1 + \delta}{2}}  (1+o(1))$ with  $ 0<\delta<1$ and $k \sim t$, then
$\frac{\sigma^{-1}x  \cdot x}k \sim (\sigma^{-1}r \cdot r) t^\delta$ and
\begin{equation}\label{nconvolution2}
a^{\ast k} (x) =  e^{-\frac12 ( \sigma^{-1}r \cdot r) \frac{ t^{1+\delta}}k (1 + o(1))}, \quad \mbox{ as } \;  k \to \infty.
\end{equation}
\end{corollary}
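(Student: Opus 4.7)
The plan is to deduce this corollary as a direct specialization of Lemma \ref{deltaless1}, so the work reduces to (a) verifying the three hypotheses of that lemma under the scaling $x = r t^{(1+\delta)/2}(1+o(1))$ with $k\sim t$, and (b) computing the quadratic form $\frac{\sigma^{-1}x\cdot x}{k}$ in the new variables.

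First I would check the hypotheses. Since $k\sim t$ and $t\to\infty$, clearly $k\to\infty$. Next, using $|x|^2 = |r|^2 t^{1+\delta}(1+o(1))$ we get
\[
\frac{|x|^2}{k} \;=\; \frac{|r|^2 t^{1+\delta}}{k}(1+o(1)) \;\sim\; |r|^2 t^\delta \;\to\;\infty
\]
because $\delta>0$; and
\[
\frac{|x|}{k} \;\sim\; \frac{|r|\, t^{(1+\delta)/2}}{t} \;=\; |r|\, t^{(\delta-1)/2} \;\to\; 0
\]
because $\delta<1$. Thus the scaling regime of Lemma \ref{deltaless1} is in force.

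Second, by the bilinearity and continuity of the quadratic form $y\mapsto \sigma^{-1}y\cdot y$, the relation $x = r t^{(1+\delta)/2}(1+o(1))$ gives
\[
\sigma^{-1}x\cdot x \;=\; (\sigma^{-1}r\cdot r)\, t^{1+\delta}\,(1+o(1)),
\]
and dividing by $k\sim t$ yields the first assertion of the corollary, namely $\frac{\sigma^{-1}x\cdot x}{k}\sim (\sigma^{-1}r\cdot r)\, t^\delta$. Substituting this into \eqref{nconvolution1} produces exactly the announced formula \eqref{nconvolution2}.

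There is no real obstacle here; the only point requiring a bit of care is that the $o(1)$ error coming from $x = rt^{(1+\delta)/2}(1+o(1))$ must be absorbed into the exponent together with the $o(1)$ error supplied by Lemma \ref{deltaless1}. Since both error terms tend to zero and multiply the same principal quantity $\tfrac12(\sigma^{-1}r\cdot r)\, t^{1+\delta}/k$, the combined exponent has the form $-\tfrac12(\sigma^{-1}r\cdot r)\,\frac{t^{1+\delta}}{k}(1+o(1))$, which is the claimed asymptotics.
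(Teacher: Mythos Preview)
Your proposal is correct and follows exactly the intended route: the paper states this result as an immediate corollary of Lemma~\ref{deltaless1} without giving a separate proof, and your argument makes explicit precisely the verification of the three hypotheses $k\to\infty$, $|x|^2/k\to\infty$, $|x|/k\to 0$ under the scaling $x=rt^{(1+\delta)/2}(1+o(1))$, $k\sim t$, followed by substitution into \eqref{nconvolution1}.
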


\medskip
Let us study now the asymptotic behaviour of the function $\frac{t^k a^{\ast k}(x)}{k!} $ as $t\to\infty$ and $x = r t^{\frac{1+\delta}{2}} (1+o(1)), \
0<\delta<1$. By Lemma \ref{deltaless1} and estimate (\ref{nconvolution2}) for any  constants $\alpha_1$ and $\alpha_2$ such that $0<\alpha_1<1<\alpha_2< \infty$, and for all $k$ from the interval $\alpha_1 t < k < \alpha_2 t$ the following asymptotics holds:
$$
\frac{t^k a^{\ast k}(x)}{k!} = \exp \left\{ k\ln t - k \ln k + k - \tilde c \: \frac{t^{1+\delta}}{k}(1+o(1)) \right\} = \exp S(k,t),  \quad t \to \infty,
$$
where $\tilde c = \frac12 \, \sigma^{-1}r \cdot r$ and $S(z,t)=  z\ln t - z \ln z + z - \tilde c \: \frac{t^{1+\delta}}{z}(1+o(1)) $.
If $\alpha_1$ is sufficiently small and  $\alpha_2$ is sufficiently large then the $\max\limits_{\alpha_1 t < z < \alpha_2 t } S(z,t)$
is attained in an interior point of the interval $(\alpha_1 t,\alpha_2 t)$, and the corresponding necessary condition reads
$$
\ln \frac{t}{z} + \tilde c \: \frac{t^{1+\delta}}{z^2} = 0.
$$
Setting $z = \xi t$ we arrive at the following equation for  $\xi$:
\begin{equation*}\label{xi.2}
\xi^2 \ln \xi = \tilde c \, t^{\delta-1}.% = o(1), \quad \mbox{.
\end{equation*}
Since $ 0<\delta<1$, the right-hand side in this equation vanishes as $t\to\infty$. Therefore,
the solution $\hat \xi$ of this  equation
admits the representation $\hat \xi = 1+ \tilde c t^{\delta-1}(1+ o(1))$.
Consequently,
\begin{equation*}\label{hatz.2}
\hat z = \hat \xi t = t + \tilde c t^\delta + o(t^\delta), \qquad \max\limits_{\alpha_1 t < z < \alpha_2 t } S(z,t) =  S(\hat z,t) = t - \tilde c t^\delta + o(t^\delta),
\end{equation*}
and for any $\alpha_1$ and $\alpha_2$ such that $0<\alpha_1< 1 <\alpha_2< \infty$ we have
\begin{equation}\label{maxSz.2}
\max_{\alpha_1 t< k <\alpha_2 t } \frac{t^k a^{\ast k}(x)}{k!} \le e^{S(\hat z,t)} = e^{ t - \frac12 (\sigma^{-1}r \cdot r) t^\delta  + o(t^\delta) },  \quad
\mbox{as } t \to \infty.
\end{equation}

To  estimate $v(x,t)$ in the region $ x = r t^{\frac{1+\delta}{2}}(1+o(1))$ we split the sum in (\ref{v}) into
three parts:
\begin{equation}\label{v.2}
v(x,t) \ =  \ e^{-t}  \sum_{k=1}^{[t/2]} \frac{ t^k  a^{\ast k} (x)}{k!} +  e^{-t}  \sum_{k=[t/2]+1}^{[2t]} \frac{ t^k  a^{\ast k}
(x)}{k!} +  e^{-t}  \sum_{k > 2t} \frac{ t^k  a^{\ast k} (x)}{k!}.
\end{equation}
Considering the inequalities $k!>k^k e^{-k}$ and $a^{\ast k} (x) \le C_1$,
one can  estimate the first sun in \eqref{v.2} as follows
\begin{equation}\label{Ev.1}
e^{-t} \, \sum_{k=1}^{[t/2]} \frac{ t^k  a^{\ast k} (x)}{k!} \le  C_1 \frac{t}{2} \, e^{-t}  (2e)^{t/2} \le C_2 e^{-\beta t}, \qquad t
\to \infty
\end{equation}
with an arbitrary $\beta\in (0, \frac{1-\ln2}2)$.
For the third sum using the relation $\frac{t}{k}< \frac12$ we get
\begin{equation}\label{Ev.3}
e^{-t} \, \sum_{k>2t} \frac{ t^k  a^{\ast k} (x)}{k!} \le  C_1   e^{t - 2t \ln 2}.
\end{equation}
To estimate the second sum in \eqref{v.2} we use \eqref{maxSz.2}. This yields
\begin{equation}\label{Ev.2}
e^{-t} \; \sum_{k=[t/2]+1}^{[2t]} \frac{ t^k  a^{\ast k} (x)}{k!} \le  2t e^{-t} e^{S(\hat z,t)} \le e^{- \frac12 (\sigma^{-1}r \cdot r)\, t^\delta  +
o(t^\delta)}, \qquad t \to \infty.
\end{equation}
Finally, from \eqref{v.2} -- \eqref{Ev.2} we get the asymptotical upper bound in the region $x =r t^{\frac{1+\delta}{2}} (1+ o(1))$.

Taking in the sum \eqref{v.2} just one term that corresponds to $\max S(z,t)$ we obtain the lower bound
$$
v(x,t) \ge  e^{-t} e^{S(\hat z,t)} \ge e^{- \frac12 (\sigma^{-1}r \cdot r) \, t^\delta (1 +o(1))}.
$$
This completes  the proof of  \eqref{Tless1}.

\subsection{The regions of extra-large deviations: Proof of Theorem \ref{LTail-1var3}}

This section deals with the large time behaviour of $v(x,t)$ in the region $x\gg t$ which is associated with the "extra-large" deviations of the corresponding process. In this region we use the Markov inequality for estimating $\Pr (|S_k| > |x|)$.
%\begin{equation}\label{A}
%\int\limits_{x}^{\infty} a^{\ast k} (z) dz = P(S_k> x)   \le \inf\limits_\gamma \frac{\left( \mathbb{E} e^{\gamma X_1} \right)^k}{ e^{k \gamma \frac{x}{k}}} = \inf\limits_\gamma e^{k L(\gamma) - k \gamma  \frac{x}{k}} = e^{- I(\frac{x}{k}) k },
%\end{equation}
%where $S_k = X_1 + \ldots X_k$ is a sum of i.i.d. 1-D random variables.
%We consider separately cases \eqref{lt} and \eqref{boundedsupp}.

\begin{lemma}\label{expp}
Let $X_i$ be i.i.d. 1-D random variables with a common distribution $a(x)$, satisfying
\eqref{a1}--\eqref{a2} and \eqref{lt}. Then there exist constants $\alpha_p = \alpha_p(b, p)$ and $\varkappa_p = \varkappa_p(b,p)$, such that for all $1\le k \le \alpha_p  x$ the following estimate holds
\begin{equation}\label{theta11}
P \{ S_k  > x \} \le
e^{- \varkappa_p \left(\frac{x}{k}\right)^p  k}.
\end{equation}
\end{lemma}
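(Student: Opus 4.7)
The plan is to derive \eqref{theta11} from the exponential Markov (Chernoff) inequality
\[
P(S_k > x) \;\le\; e^{-\gamma x}\,\mathbb E\,e^{\gamma S_k} \;=\; e^{-\gamma x}\Lambda(\gamma)^k, \qquad \gamma>0,
\]
by choosing $\gamma$ appropriately as a function of $x/k$ and using \eqref{lt} to control $\Lambda(\gamma)$.

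First I would obtain an upper bound on $L(\gamma)=\ln\Lambda(\gamma)$. Since $a(y)\le C_1 e^{-b|y|^p}$,
\[
\Lambda(\gamma) \;\le\; C_1\!\int_{\mathbb R} e^{\gamma y - b|y|^p}\,dy.
\]
For $p>1$ the exponent $\gamma y - b|y|^p$ attains its maximum at $y^\ast = (\gamma/(bp))^{1/(p-1)}$, and a direct Laplace-type estimate (splitting into a neighbourhood of $y^\ast$ where the exponent is well-approximated by its quadratic expansion, and the complement where the integrand is exponentially dominated) gives
\[
L(\gamma) \;\le\; C(b,p)\,\gamma^{p/(p-1)}\bigl(1+o(1)\bigr)\qquad\text{as }\gamma\to\infty,
\]
with $C(b,p)=\tfrac{p-1}{p}(bp)^{-1/(p-1)}$, in agreement with \eqref{two-sided_p-bis}. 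For $p=1$ one verifies directly that $\Lambda(\gamma)<\infty$ and $L(\gamma)$ is bounded on every compact subset of $(0,b)$, since the integral $\int e^{(\gamma-b)|y|}\,dy$ converges for $\gamma<b$.

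Next I would optimize over $\gamma$. In the case $p>1$ the choice $\gamma_\ast=(bp)(x/k)^{p-1}$ minimises the leading-order bound, giving, after a short computation that uses $C(b,p)\cdot p/(p-1) = (bp)^{-1/(p-1)}$,
\[
-\gamma_\ast x + k\,L(\gamma_\ast) \;\le\; -b\Bigl(\frac{x}{k}\Bigr)^{\!p}\!k\,\bigl(1+o(1)\bigr).
\]
Since this estimate requires $\gamma_\ast$ to be large, it is valid precisely when $x/k\ge 1/\alpha_p$ for sufficiently small $\alpha_p=\alpha_p(b,p)$, and the $o(1)$ term can then be absorbed by taking any $\varkappa_p<b$. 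In the case $p=1$ one fixes any $\gamma_0\in(0,b)$, sets $\varkappa_1\in(0,\gamma_0)$, and takes $\alpha_1=(\gamma_0-\varkappa_1)/L(\gamma_0)>0$ (well defined since $L(\gamma_0)>L(0)=0$). Then for $k\le \alpha_1 x$ one has $kL(\gamma_0)\le(\gamma_0-\varkappa_1)x$, so that
\[
P(S_k > x) \;\le\; e^{-\gamma_0 x + k L(\gamma_0)} \;\le\; e^{-\varkappa_1 x} \;=\; e^{-\varkappa_1(x/k)\,k},
\]
which is exactly \eqref{theta11} for $p=1$.

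The main obstacle is the Laplace-type upper bound for $\Lambda(\gamma)$ in the $p>1$ case: one must verify that the polynomial prefactor arising from the Gaussian integration around $y^\ast$ (and from the contribution near $y=0$) does not spoil the leading exponential rate $C(b,p)\gamma^{p/(p-1)}$. Once this is established, the optimization step is elementary algebra, and the identification of $\alpha_p,\,\varkappa_p$ is straightforward.
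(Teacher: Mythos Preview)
Your proposal is correct and follows essentially the same route as the paper: both apply the Chernoff bound $P(S_k>x)\le e^{-\gamma x}\Lambda(\gamma)^k$ and control $\Lambda(\gamma)$ via the pointwise bound $a(y)\le C_1e^{-b|y|^p}$, then optimise in $\gamma$. The only cosmetic differences are that for $p=1$ the paper uses the quadratic bound $\Lambda(m)\le e^{hm^2}$ on $(0,b/2]$ and then minimises, whereas you simply fix a single $\gamma_0\in(0,b)$; and for $p>1$ the paper derives an explicit inequality $\Lambda(m)\le e^{c_4 m^{p/(p-1)}}$ valid for all $m\ge1$ rather than your asymptotic form with $o(1)$---but since the statement only concerns $k\le\alpha_p x$ with $\alpha_p$ small (hence $\gamma_\ast$ large), your asymptotic bound suffices.
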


\begin{proof}
The cases $p=1$ and $p > 1$ are considered in a slightly different way. If $p=1$, the inequality
$$
{\mathbb E} e^{m X_1} \ \le \ e^{h m^2}
$$
being valid for all  $m\in(0,\frac b2)$ with some constant  $h >0$. Then the Markov inequality implies that
\begin{equation}\label{P2}
P \{ S_k  > x \} \le \ \min\limits_{0< m \le \frac b2 } \frac{(\mathbb{ E} e^{m  X_1})^k }{e^{m x}} = e^{ \min\limits_{0< m \le \frac{b}{2}} ( h m^2 k - m x) } = e^{ \frac{h b^2 k}{4} - \frac{b x}{2}} = e^{(\frac{h b^2 k}{4} - \frac{b x}{4}) - \frac{b x}{4}} \le  e^{- \frac{b x}{4}}
\end{equation}
for $k < \frac{x}{ h b}$. Thus in the case $p=1$ inequality \eqref{theta11} holds with $\varkappa_1 = \frac b4$ and $\alpha_1 = \frac{1}{2 h b}$.
\medskip

If $p>1$ then applying the Markov inequality we get
\begin{equation}\label{1}
P \{  S_k  > x \} \le \ \min\limits_{ m > 0 } \frac{(\mathbb{ E} e^{m X_1})^k }{e^{m x}} =
\left(  \min\limits_{m > 0}   \frac{\mathbb{ E} e^{m X_1} }{e^{\frac{m x}{k} }} \right)^k.
\end{equation}
Let us estimate $\mathbb{ E} e^{m X_1}$. Setting $\varphi(x) = mx - b x^p, \; x>0$, we
obtain
\begin{equation}\label{maxphi}
\max_x \varphi(x) = \varphi \left(\left( \frac{m}{ bp} \right)^{\frac{1}{p-1}}\right) = c_2 (b, p) \: m^{\frac{p}{p-1}}, \quad c_2 (b,p) = \frac{p-1}{p} (bp)^{-\frac{1}{p-1}},
\end{equation}
and $\varphi(x)<0$ as $x> x_1 = \left( \frac mb \right)^{\frac{1}{p-1}}$.
Since $mx-bx^p\leq \varphi'(x_1)(x-x_1)=m(1-p)(x-x_1)$ for $x\geq x_1$, from \eqref{maxphi} it follows that% for $m \ge 1$ we have
$$
\mathbb{ E} e^{m  X_1} \le \int\limits_{|x|\le x_1} e^{\varphi(x)} dx + \int\limits_{|x|> x_1} e^{mx} a(x) dx \le
c_3 (b,p) m^{\frac{1}{p-1}} e^{c_2(b, p) \: m^{\frac{p}{p-1}}} + C_1((p-1)m)^{-1}.% \le e^{c_4 (b, p) \ m^{\frac{p}{p-1}}}.
$$
Then there exists a constant $c_4=c_4(p,b)$ such that for all $m\geq 1$
\begin{equation}\label{2}
\mathbb{ E} e^{m  X_1} \le e^{c_4 (b, p) \ m^{\frac{p}{p-1}}}.
\end{equation}
Inserting \eqref{2} into \eqref{1} yields
\begin{equation}\label{3}
P \{ S_k  >  x \}
\le  \left(  \min\limits_{m \ge 1}   e^{ c_4 (b, p) \: m^{\frac{p}{p-1}} - m \frac{x}{k}} \right)^k =
 e^{\ k \: \min\limits_{m\ge 1}   f_{p,k}(m)},
\end{equation}
where
$$
f_{p,k}(m) = c_4 (b, p) \: m^{\frac{p}{p-1}} - m \: \frac{x}{k}, \quad m\ge 1.
$$
If we take $k \le \alpha_p x$ with $\alpha_p \le \frac{p-1}{2p c_4}$, then $f_{p,k}(1)<0$ and $f'_{p,k}(1)<0$.
Determining the minimum of  $f_{p,k}$ we obtain
$$
\min\limits_{m \ge 1}   f_{p,k}(m) = - \varkappa_p \left(\frac{x}{k} \right)^p
$$
with some  constant $\varkappa_p>0$. Inequality \eqref{theta11} then follows from \eqref{3}.
\end{proof}

%Using the same reasoning as above in Lemma \ref{deltaless1} we obtain the following
\begin{corollary}
In the multidimensional case estimate \eqref{theta11} takes the form
\begin{equation}\label{theta11-d}
P \{ |S_k|  > |x| \} \le
e^{- \varkappa_p \left(\frac{|x|}{k}\right)^p  k(1+o(1))},\qquad\hbox{as }|x|\to\infty.
\end{equation}
\end{corollary}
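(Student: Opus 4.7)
The plan is to reduce the multi-dimensional estimate to the one-dimensional Lemma \ref{expp} by projecting $S_k$ onto a finite collection of unit vectors. Fix a $\tfrac12$-net $\theta_1,\ldots,\theta_N\in S^{d-1}$, so that for every unit vector $\eta$ there exists some index $j$ with $\eta\cdot\theta_j\geq\tfrac12$. Then $\{|S_k|>|x|\}$ is contained in $\bigcup_{j=1}^N\{S_k\cdot\theta_j>|x|/2\}$, and the union bound yields
\begin{equation*}
P\{|S_k|>|x|\}\leq N\,\max_{1\leq j\leq N}P\{S_k\cdot\theta_j>|x|/2\}.
\end{equation*}
The cardinality $N$ depends only on the dimension $d$, so the resulting factor $\ln N$ that will appear in the exponent is harmless and will be absorbed into the $(1+o(1))$ as $|x|\to\infty$.

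Next, I would apply the Markov inequality to each projected sum $S_k\cdot\theta_j=\sum_{i=1}^k X_i\cdot\theta_j$ and bound the moment generating function uniformly in the direction $\theta_j$. Since $|X_i\cdot\theta_j|\leq|X_i|$ and $a(x)\leq C_1 e^{-b|x|^p}$ is controlled by a radial envelope,
\begin{equation*}
\mathbb{E}\,e^{m X_1\cdot\theta_j}\leq\int_{\mathbb{R}^d}e^{m|x|}a(x)\,dx\leq C_1\int_{\mathbb{R}^d}e^{m|x|-b|x|^p}\,dx,
\end{equation*}
which is independent of $\theta_j$. In the case $p=1$, the centering $\mathbb{E}\,X_1=0$ coming from the symmetry of $a$ together with the finiteness of the second moment gives the sub-Gaussian-type bound $\mathbb{E}\,e^{m X_1\cdot\theta_j}\leq e^{hm^2}$ for all $m$ in a small interval $(0,b/2)$, uniformly in $\theta_j$. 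For $p>1$, passing to spherical coordinates on the right-hand side absorbs the $r^{d-1}$ factor into a polynomial prefactor and leaves the same exponential asymptotics as in Lemma \ref{expp}, namely $\mathbb{E}\,e^{m X_1\cdot\theta_j}\leq e^{c_4(b,p)\,m^{p/(p-1)}}$ for large $m$, again uniformly in $\theta_j$.

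Having these direction-uniform moment estimates, I would repeat verbatim the optimization in $m$ carried out in the proof of Lemma \ref{expp}, with $x$ replaced by $|x|/2$. This yields, for $1\leq k\leq\tilde\alpha_p|x|$, the inequality $P\{S_k\cdot\theta_j>|x|/2\}\leq\exp\bigl(-\tilde\varkappa_p(|x|/k)^p k\bigr)$ with positive constants $\tilde\alpha_p,\tilde\varkappa_p$ depending only on $b$ and $p$. Since the leading term in the exponent satisfies $|x|^p/k^{p-1}\geq\tilde\alpha_p^{1-p}|x|\to\infty$, the multiplicative factor $N$ contributes an additive $\ln N$ that is absorbed into $(1+o(1))$, and \eqref{theta11-d} follows after relabelling the constants.

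The principal technical point I expect is ensuring that the moment generating function bound, and thus the optimization in $m$, is genuinely uniform over the finite family of directions $\theta_j$; this is exactly what the radial upper envelope $C_1e^{-b|x|^p}$ provides. The remaining work is bookkeeping: the projection costs the factor $\tfrac12$, the covering contributes the multiplier $N$, and neither affects the form of the exponent in \eqref{theta11-d} because the main term grows at least linearly in $|x|$.
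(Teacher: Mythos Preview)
Your proposal is correct and follows essentially the same strategy as the paper: cover the sphere by finitely many directions, reduce $\{|S_k|>|x|\}$ to a union of half-space events $\{S_k\cdot\theta_j>(1-\varepsilon)|x|\}$, and then invoke the one-dimensional tail bound for each projected sum. The paper uses a generic $\varepsilon$ and passes through the projected density $a_\theta(s)\leq C(1+s)^{d-1}e^{-b|s|^p}$ before citing Lemma~\ref{expp}, whereas you fix $\varepsilon=\tfrac12$ and bound the moment generating function of $X_1\cdot\theta_j$ directly via the radial envelope and then redo the optimisation in $m$; these are interchangeable variants of the same argument, and both correctly absorb the covering constant $N$ and the polynomial prefactor into the $(1+o(1))$.
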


\begin{proof}
Given a sequence of i.i.d. random vectors $X_j$, $j=1,2,\ldots$, with a common distribution density $a(\cdot)$, for
any $\theta\in S^{d-1}$ we consider 1-D random variables $\theta \cdot X_j$. Denote the distribution density of $\theta\cdot X_j$
by $a_\theta(s)$. Then
$$
a_\theta(s)\leq C(1+s)^{d-1}e^{-b|s|^p}.
$$
Therefore,  by \eqref{theta11}
\begin{equation}\label{theta11_mul}
P\{S_j\cdot\theta>|x|\}\leq e^{- \varkappa_p\left(\frac{|x|}{k}\right)^p k(1+o(1))}.
\end{equation}
%Since estimate (\ref{lt}) on the density is isotropic, then for any unit vector $\theta \in R^d$ we have the same estimate
%(\ref{petrovtheta}).
For a $d$-dimensional random vector $X$ and arbitrary $\varepsilon>0$ one can find a finite collection of unit vectors $\theta_1,
\ldots, \theta_N, \; N=N(\varepsilon, d)$ such that
$$
\{ |X | > |x| \}  \subset \bigcup_{i=1}^N \{ \theta_i \cdot X >(1-\varepsilon) |x| \}
$$
Then
\begin{equation*}\label{N}
P \{ | X | > |x| \} \le N(\varepsilon, d) \ P \{ \theta \cdot X  > (1-\varepsilon ) |x| \},
\end{equation*}
and together with \eqref{theta11_mul} it gives the desired asymptotic estimate \eqref{theta11-d} for $P (|S_k| > |x|)$ in
the multi-dimensional case.
%\begin{equation}\label{petrov-d}
%P (|S_k| > |y|) =  \exp \left\{ -  \frac{y^2}{\sigma^2 k}  + o(\frac{y^2}{k})
%\right\}, \quad k\to \infty.
%exp \left\{ - \frac{y^2}{\sigma^2 k} + c_1  \frac{y^3}{k^2} + o( \frac{y^3}{k^2})  \right\}, \quad k \to \infty,
%\end{equation}
\end{proof}

We proceed with obtaining point-wise estimates for  $a^{*k} (x)$.
Denote  by $F_k(s)$ the distribution function of $|S_k|$, then in the case $p=1$ we have
$$
a^{\ast (k+1)}(x)  \le  C_1 \int\limits_{0}^{\infty} e^{-b (|x|-s)} \ dF_k (s) =
$$
$$
C_1 \int\limits_{0}^{\frac12 |x| }  e^{-b (|x|-s)} \ dF_k (s) + C_1 \int\limits_{\frac12 |x|}^{\infty} e^{-b (|x|-s)} \ dF_k (s) \le C_1 e^{-\frac12 b |x|} + C_1 P \big\{ |S_k| \ge \frac12 |x| \big\}.
$$
Together with estimates \eqref{theta11-d} and \eqref{P2}, where $\varkappa_1 = \frac b4$, this yields for all $k \le \alpha_1 |x|$:
\begin{equation}\label{akp1}
a^{\ast k}(x) \le 2 C_1 \ e^{- \frac{b}{8} |x|}.
\end{equation}
The case $p > 1$ can be treated similarly, and for any  $k \le \alpha_p |x|$ we obtain
\begin{equation}\label{akpg1}
a^{\ast (k+1)}(x) \le   C_1 \int\limits_{0}^{\infty} e^{-b (|x|-s)^p} \ dF_k (s)
\le C_1 e^{-\frac{b}{2^p} x^p} + C_1 e^{- \varkappa_p \frac{x^p}{2^p k^{p-1}} } \le C_2 e^{- \tilde \varkappa_p \frac{x^p}{k^{p-1}} }.\end{equation}

In order to obtain upper bounds for the terms of the sum in \eqref{v} we make use of %for appropriate $k$ using
 estimates  \eqref{akp1}--\eqref{akpg1}. %on $a^{\ast k}(x)$ in the region $x = r t^{\frac{1+\delta}{2}}, \ \delta>1$.
 We denote
\begin{equation}\label{def_ss0}
S(z,t) = z \ln t - z \ln z + z + \ln a^{\ast z}(x), \quad S_0 (z,t) = z \ln t - z \ln z + z = z \ln \frac tz +z.
\end{equation}
Notice that
$$
\max_z S_0 (z,t) = S_0(t,t) = t,
$$
and  $S_0(z,t)$ is decreasing in $z$ as $z>t$. Consequently, for any $c>0$ and for sufficiently large $t$ we have
\begin{equation}\label{maxS0}
\max\limits_{z \geq c t^{\frac{\delta+1}{2}}} S_0(z,t) = S_0 \left(c t^{\frac{\delta+1}{2}}, \: t \right) < - \tilde c t^{\frac{\delta+1}{2}} \ln t.
\end{equation}
In the case $p=1$, considering the upper bound $a^{\ast k}(x) \le C_1$, we get
\begin{equation}\label{maxS}
\max\limits_{z \geq c t^{\frac{\delta+1}{2}}} S (z,t) \le \max\limits_{z \geq c t^{\frac{\delta+1}{2}}} S_0(z,t) + \ln C_1  < - \tilde c_1 t^{\frac{\delta+1}{2}} \ln t .
\end{equation}
%Thus $\max S(z,t)\leq -Ct^{\frac{\delta+1}{2}} \ln t \ll -t^{\frac{\delta+1}{2}}$ in the region  $\{(z,t)\,:\,z > c t^{\frac{\delta+1}{2}}\}$.

If $k < \alpha_1| x| = \alpha_1 r t^{\frac{\delta+1}{2}}(1+o(1))$, then
estimate \eqref{akp1} implies the following uniform in $k$ upper bound
\begin{equation}\label{a-astk}
a^{\ast k}(x) < C_2 \ e^{- \frac{b}{8} |x|} =  C_2 \ e^{- \frac{b}{8} r t^{\frac{\delta+1}{2}}(1+o(1))}.
\end{equation}
Consequently,
\begin{equation}\label{maxS1}
\max\limits_{k < \alpha_1 r t^{\frac{\delta+1}{2}}} S (k,t)
%= \max\limits_{k < \alpha_1 r  t^{\frac{\delta+1}{2}}} (S_0(k,t) + \ln a^{\ast k}(x))
\le  S_0(t, t) +  \max\limits_{k < \alpha_1 r  t^{\frac{\delta+1}{2}}} \ln a^{\ast k}(x) \le t - \frac{b}{8} r t^{\frac{\delta+1}{2}}.
\end{equation}

Finally, using \eqref{maxS}  and \eqref{maxS1}, we conclude that in the case $p=1$ the asymptotic estimate \eqref{Pge1} holds with $c_1 = \frac b8 r$. Indeed,
\begin{equation*}\label{peq1.fin}
v(x,t) =  e^{-t}  \sum_{k=1}^{\infty} \frac{ t^k  a^{\ast k} (x)}{k!}  =  e^{-t}  \sum_{k=1}^{\alpha_1 r t^{\frac{\delta+1}{2}} } \frac{ t^k  a^{\ast k} (x)}{k!} + e^{-t}  \sum_{k> \alpha_1 r t^{\frac{\delta+1}{2}}} \frac{ t^k  a^{\ast k} (x)}{k!}
\end{equation*}
$$
\le  \alpha_1 r t^{\frac{\delta+1}{2}}  e^{  - \frac{b}{8} r t^{\frac{\delta+1}{2}}} + O\left( e^{ - t^{\frac{\delta+1}{2}} \ln t} \right) \le
e^{- \frac{b}{8} r t^{\frac{\delta+1}{2}}\ (1+ o(1)) } \quad \mbox{ as } \; t \to \infty.
$$
In the case $p>1$, using estimates \eqref{maxS0} and \eqref{akpg1}, for all $k \le \alpha_p |x| =  \alpha_p r t^{\frac{\delta+1}{2}}(1+o(1))$ we have
$$
\max\limits_{k \le \alpha_p |x|} S(k,t) =  \max\limits_{ k \le \alpha_p |x|} \{ S_0(k,t) + \ln a^{\ast k}(x) \}
 \le  \max\limits_{k \le \alpha_p |x|} \Big\{ S_0 (k,t) - \tilde \varkappa_p r^p \:\frac{t^{\frac{\delta+1}{2} \: p}}{k^{p-1}} \Big\} \le - c^{(1)}_p t^{\frac{\delta+1}{2} } (\ln t)^{\frac{p-1}{p}}.
$$
In order to justify the last inequality we notice that
$$
\hat k(t) := \mathrm{argmax} \, \left\{ S_0 (k,t) - \tilde \varkappa_p r^p \: \frac{t^{\frac{\delta+1}{2} \: p}}{k^{p-1}} \right\} = \hat c \: \frac{t^{\frac{\delta + 1}{2} }}{(\ln t)^{\frac{1}{p}}} = o( t^{\frac{\delta + 1}{2} })
$$
with a constant $\hat c = \hat c(p,\delta ,r)$.
Then $\hat k(t)<\alpha_p|x|$ and, therefore,
$$
\frac{ t^k  a^{\ast k} (x)}{k!}  \le e^{-  c^{(1)}_p t^{\frac{\delta+1}{2} } (\ln t)^{\frac{p-1}{p}}}, \quad k< \hat k(t).
$$
Estimating  $S(k,t)$ for $k > \hat k(t)$ relies on the inequalities  for the function $S_0(k,t)$ similar to those in \eqref{maxS0}  and the upper bound $a^{\ast k}(x) \le C_1$. We have for $ k > \hat k(t)$
$$
\frac{ t^k  a^{\ast k} (x)}{k!} <  C_1 e^{S_0(\hat k(t), t)}  = C_1 e^{\hat k(t) (\ln t - \ln \hat k(t) + 1)} \le e^{-  c^{(2)}_p t^{\frac{\delta+1}{2} } (\ln t)^{\frac{p-1}{p}} (1+ o(1))}.
$$
Finally, taking into account the fact that  $\frac tk<\frac12$ for all $k > \hat k(t) =  \hat c  t^{\frac{\delta+1}{2}}(\ln t)^{- \frac1p}$, we conclude that in the case $p>1$
\begin{equation*}\label{VfinPG1}
\begin{array}{c}
\displaystyle
v(x,t) =  e^{-t}  \sum_{k  \le \hat k(t) } \frac{ t^k  a^{\ast k} (x)}{k!} \ + \
 e^{-t}  \sum_{k > \hat k(t) }  \frac{ t^k  a^{\ast k} (x)}{k!}\\[7mm]
 \displaystyle
\le C_1 t^{\frac{\delta+1}{2}} e^{-t -  c^{(1)}_p t^{\frac{\delta+1}{2} } (\ln t)^{\frac{p-1}{p}}} \ + \  e^{- t -  c^{(2)}_p t^{\frac{\delta+1}{2} } (\ln t)^{\frac{p-1}{p}} (1+o(1))  } \le e^{-  c_p t^{\frac{\delta+1}{2} } (\ln t)^{\frac{p-1}{p}} (1+ o(1)}. %\quad t \to \infty.
\end{array}
\end{equation*}
\medskip

Now let us consider the case of $a(x)$ with a compact support. If \eqref{boundedsupp} holds then
%since $ a^{\ast k} (x) \le C_1$, we have
\begin{equation}\label{Vfin}
v(x,t) =  e^{-t}  \sum_{k > \frac{r}{\mu} t^{\frac{\delta+1}{2}} } \frac{ t^k  a^{\ast k} (x)}{k!} \le  C_1 e^{-t}  \exp \left\{ \max\limits_{k > \frac{r}{\mu} t^{\frac{\delta+1}{2}}} S_0(k,t) \right\},
\end{equation}
where $S_0$ has been defined in \eqref{def_ss0}.
The function $S_0(k,t)$ is decreasing in variable $k$ as $k>t$, hence
\begin{equation}\label{Vfin1}
\max\limits_{k > \frac{r}{\mu} t^{\frac{\delta+1}{2}}} S_0(k,t) = S_0 \left(\frac{r}{\mu} \: t^{\frac{\delta+1}{2}}, \: t \right) = - \frac{r}{\mu}\: \frac{\delta-1}{2}\: t^{\frac{\delta+1}{2}} \ln t(1+ o(1)).
\end{equation}
Finally, \eqref{Vfin} and \eqref{Vfin1} imply estimate \eqref{Pfinite}.
This completes the proof of Theorem  \ref{LTail-1var3}.

\subsection{The region of large deviations: Proof of Theorem \ref{LTail-2_MD}}

The main step of the proof is obtaining point-wise estimates for $a^{* k}$.

\begin{lemma}\label{LD-bis}
Let  $a(x)$ satisfy  \eqref{a1}--\eqref{a2} and \eqref{lt},  and assume that, in the case  $p = 1$, condition ${\bf A_1}$  holds.
Then for $ x = rt (1+ o(1))$, $r \in R^d \backslash \{0\} $, and for any positive constants $\alpha_1 < \alpha_2$ we have
\begin{equation}\label{akLD-bis1bis}
a^{\ast k} (x) \le e^{-I(\frac{x}{k}) k (1+ o(1))}, \quad \mbox{\rm  if } \quad \alpha_1 t \le k \le \alpha_2 t.
\end{equation}
Furthermore, there exists a positive constant $\alpha_1>0$  such that
\begin{equation}\label{akLD-bis3bis}
a^{\ast k} (x) \le e^{- I(r) \, t(1+o(1)) } ; \quad \mbox{\rm if } \; 1 \le k \le \alpha_1 t.
\end{equation}
If condition \eqref{boundedsupp}  is fulfilled,  then for any $\alpha_2$
\begin{equation}\label{akLD-bis4bis}
a^{\ast k} (x)=0, \; \mbox{\rm if } \; k\leq\frac{|x|}{\mu},\quad \mbox{\rm and } \quad
a^{\ast k} (x) \le e^{-I(\frac{x}{k}) k (1 + o(1))}, \; \mbox{\rm if } \;  \frac{ |x|}{\mu} \le k \le \alpha_2 t.
\end{equation}
%where $I(\frac{x}{k})$ is the rate  function.
\end{lemma}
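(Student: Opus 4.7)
My plan is to derive all three estimates from the Cram\'er tilting identity \eqref{N3}. Given $x=rt(1+o(1))$ and $k$ in the relevant range, set $x^*=x/k$ and let $\gamma^*=\gamma^*(x^*)$ be the solution of $\nabla L(\gamma^*)=x^*$ supplied by Proposition~\ref{DR}. The identity reads
\[
a^{*k}(x)=a^{*k}_{\gamma^*}(x)\,e^{-kI(x^*)},
\]
so it remains to bound the tilted convolution $a^{*k}_{\gamma^*}(x)$, evaluated precisely at the mean $k\nabla L(\gamma^*)=x$ of the tilted random walk. Throughout, I would rely on the asymptotics of $I$ from Proposition~\ref{I_MD}.

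For \eqref{akLD-bis1bis} in the regime $\alpha_1 t\le k\le\alpha_2 t$, the point $x^*$ varies in a compact subset of $\mathbb{R}^d\setminus\{0\}$ that is bounded away from $\partial\mathcal{A}$ (when $a$ has compact support). Hence $\gamma^*$ stays in a compact set, the tilted densities $a_{\gamma^*}$ form a uniformly regular family with uniformly nondegenerate covariance $\sigma_{\gamma^*}=\nabla\nabla L(\gamma^*)$, and the multidimensional local central limit theorem (e.g.\ \cite[Theorem~19.1]{BhaRao}) applied to i.i.d.\ copies of a variable with density $a_{\gamma^*}$ evaluated at their mean yields $a^{*k}_{\gamma^*}(x)=O(k^{-d/2})$ uniformly in $\gamma^*$. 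Absorbing $k^{-d/2}=e^{o(kI(x^*))}$, which is legitimate because $I(x^*)$ is bounded below in this range, gives \eqref{akLD-bis1bis}.

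For \eqref{akLD-bis3bis} I would first reduce to the stronger pointwise inequality $a^{*k}(x)\le e^{-kI(x/k)(1+o(1))}$. The reduction relies on the observation that convexity of $I$ together with $I(0)=0$ yields $I(y)\le y\cdot\nabla I(y)$, and hence that $k\mapsto kI(x/k)$ is nonincreasing; therefore, for $k\le t$, $kI(x/k)\ge tI(x/t)=tI(r)(1+o(1))$. The pointwise bound is then established case by case. For $p>1$ I would invoke the Chernoff-type estimate \eqref{akpg1}, derived in the proof of Theorem~\ref{LTail-1var3}, combined with the asymptotics $I(s)\sim b|s|^p$ at infinity from Proposition~\ref{I_MD}. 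For $p=1$ the tilting parameter $\gamma^*$ approaches the boundary of the domain of $L$, so I would instead tilt by a \emph{fixed} vector $\gamma=(b-\varepsilon)r/|r|$; under ${\bf A_1}$ both $L(\gamma)$ and $\|a_\gamma\|_\infty$ are finite, so
\[
a^{*k}(x)\le\|a_\gamma\|_\infty\,e^{-\gamma\cdot x+kL(\gamma)}\le C\,e^{-\bigl((b-\varepsilon)|r|-\alpha_1 L(\gamma)\bigr)\,t\,(1+o(1))}.
\]
Since $I(r)<b|r|$ strictly for every finite $r\ne 0$, one can pick $\varepsilon$ small and then $\alpha_1$ small so that the exponent exceeds $I(r)t(1+o(1))$.

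Finally, the compact-support case \eqref{akLD-bis4bis} is handled by noting that $\mathrm{supp}\,a^{*k}\subset\{|y|\le k\mu\}$, whence $a^{*k}(x)=0$ for $k<|x|/\mu$. In the range $|x|/\mu\le k\le\alpha_2 t$ the vector $x/k$ lies in the interior of $\mathcal{A}$ and the tilting argument of the second paragraph applies. The main technical difficulty I anticipate is controlling the uniformity of the local central limit theorem over the tilted family $\{a_{\gamma^*}\}$ near the endpoint $k\sim|x|/\mu$, where $|\gamma^*|\to\infty$ and $\sigma_{\gamma^*}$ may degenerate; there the potential blow-up of $\|a_{\gamma^*}\|_\infty$ must be absorbed by the growth of $kI(x/k)$ dictated by the divergence of $I$ near $\partial\mathcal{A}$ from Proposition~\ref{I_MD}.
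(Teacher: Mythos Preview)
Your tilting-plus-local-CLT route is genuinely different from the paper's proof, and is essentially the method the paper reserves for the \emph{lower} bound in Theorem~\ref{Asymptotics} under the stronger hypothesis~${\bf A_1^s}$. The problem is that Proposition~\ref{DR}, on which your whole argument for \eqref{akLD-bis1bis} rests, is stated for $p=1$ only under~${\bf A_1^s}$, and for $p>1$ under condition~\eqref{N0}; neither is assumed in Lemma~\ref{LD-bis}. Under~${\bf A_1}$ alone the equation $\nabla L(\gamma)=x^*$ can fail to be solvable: take for instance $a(x)=c\,e^{-b|x|}(1+|x|^3)^{-1}$ in one dimension, which satisfies~${\bf A_1}$ but has $\Lambda(b)<\infty$ and $L'$ bounded, so that $\gamma^*(x^*)$ does not exist for $|x^*|$ large. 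Since the lemma allows $\alpha_1$ arbitrarily small, $x^*=x/k$ can be arbitrarily large, and your claim that ``$\gamma^*$ stays in a compact set'' breaks down. The paper circumvents this entirely: for $p=1$ it writes $a^{*(k+1)}(x)$ as an integral over the sublevel and superlevel sets of $I(\cdot/k)$ at height $I(x/k)$, bounds the latter by the large-deviation upper bound, and bounds the former by integration by parts combined with the pointwise inequality $e^{-I(z/k)k}a(x-z)\le e^{-I(x/k)k}$, which follows from $|\nabla I|\le b$ (Proposition~\ref{I_MD}, valid under~${\bf A_1}$). For $p>1$ it splits instead over $\{|z-x|<hk^{1/p}\}$ and its complement, using the decay of $a$ on the complement and the LDP on the small ball. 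Both arguments need only the Cram\'er LDP upper bound, which holds under~\eqref{lt} alone.

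Two smaller points on \eqref{akLD-bis3bis}. For $p>1$ you invoke $I(s)\sim b|s|^p$, but this asymptotic is part~2 of Proposition~\ref{I_MD} and requires~${\bf A_p}$, not assumed here; the paper uses instead only the fact that $\alpha\mapsto\alpha\tilde I(s/\alpha)$ is decreasing and tends to infinity as $\alpha\to0$ (a consequence of convexity and~\eqref{gti}, both valid under~\eqref{lt}). Your fixed-tilt argument for $p=1$, on the other hand, is correct and somewhat cleaner than the paper's route, which passes through a radially symmetric majorant and a separate one-dimensional estimate (Proposition~\ref{LD-proposition}).
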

%
%\begin{remark}\label{r_inf}
% Since $I(r)<+\infty$ in the vicinity of zero, there exists $\mu_1$, $0<\mu_1\leq\mu$, such that $I(\frac{x}{k})\not=\infty$ if
 %$k>\frac {|x|}{\mu_1}$.
%\end{remark}
%
\begin{proof}
%The cases $p=1$ and $p > 1$ are considered in a slightly different way.

We start with the case $p=1$, $ \alpha_1 t \le k \le \alpha_2 t$. The kernel $a^{\ast (k+1)}(x)$  can be written as follows:
\begin{equation}\label{three-bis}
a^{\ast (k+1)}(x) = \int\limits_{\{ z: \ I(\frac{z}{k}) \le I(\frac{x}{k}) \}} a^{\ast k} (z) a(x-z) dz + \int\limits_{\{ z: \ I(\frac{z}{k}) > I(\frac{x}{k}) \}} a^{\ast k} (z) a(x-z) dz.
\end{equation}
Denote by $A_1 = \{ z: \ I(\frac{z}{k}) \le I(\frac{x}{k}) \}, \; A_2 =\{ z: \ I(\frac{z}{k}) > I(\frac{x}{k}) \}$.
Using the large deviations principle for the sum of i.i.d. random vectors, see \cite{DZ}, we obtain the upper estimate for the second integral in \eqref{three-bis}, when $ \alpha_1 t \le k \le \alpha_2 t$:
\begin{equation}\label{first}
\int\limits_{A_2} a^{\ast k} (z) a(x-z) dz \le C_1 P (S_k \in A_2) \le C_1 \exp{\left\{- \inf_{k\rho \in A_2} I(\rho) \, k +o(k)\right\}} = e^{-I(\frac{x}{k}) k+o(k)}.
\end{equation}
To estimate the first integral in \eqref{three-bis} we introduce
$$
F_k (s) = \int\limits_{\{z: \ I(\frac{z}{k}) k \ge s  \}} a^{\ast k}(z) dz, \qquad \mbox{where } \; s \in \left( 0, I(\frac xk) k \right),
$$
then we have
\begin{equation}\label{Fbis}
F_k (0) =1, \quad F_k (\infty)=0, \quad F_k (s) = e^{-s (1+o(1))} \; (k \to \infty).
\end{equation}
If we denote $L_s = \{ z: \  I(\frac zk) k = s \}$, then ${\rm dist} (x , L_s)$ is a decreasing continuous function on $[0, I(\frac xk) k]$,
it is smooth on $(0,I(\frac xk) k]$.  For each $s\in [0, I(\frac xk) k]$ there exists a unique $z_s \in L_s$ such that
${\rm dist} (x , L_s)= |x - z_s|$. All these assertions are elementary consequences of  convexity of the function $I$. Clearly, $z_0 =0, \ z_{ I(\frac xk) k } = x$.

It follows from Proposition \ref{I_MD} and estimate \eqref{lt} that for any $x,z \in R^d$, such that $I(\frac{z}{k}) < I(\frac{x}{k})$, the following inequality holds true:
\begin{equation*}
\left(  I(\frac{x}{k}) - I(\frac{z}{k}) \right) \, k \le \max_{r \in l(\frac xk, \frac zk)} |\nabla I(r) | \, |x-z| \le b \, |x-z| \le - \ln a(x-z),
\end{equation*}
where by $ l(x, y)$ we denote the segment connecting points $x$ and $y$. Consequently,
\begin{equation}\label{Ixz}
e^{- I(\frac zk) k } \, a(x-z) \le e^{- I(\frac xk) k }.
\end{equation}
Then using \eqref{Fbis} and inequality \eqref{Ixz} we rewrite the first integral in \eqref{three-bis} as follows:
$$
\int\limits_{A_1} a^{\ast k} (z) a(x-z) dz  \le C_1 \int\limits_{A_1} a^{\ast k} (z) e^{-b|x-z|} dz  \le  C_1 \int\limits_0^{I(\frac{x}{k}) k} e^{-b \, {\rm dist} (x, L_s)} d(-F_k(s))
$$
$$
 = C_1 e^{-b \, {\rm dist} (x, L_s)} F_k(s) \Big|_{I(\frac{x}{k}) k}^{0} \ - \
C_1 b \int\limits_0^{I(\frac{x}{k}) k} F_k(s) e^{-b \, {\rm dist} (x, L_s)}\, \frac{d}{ds} {\rm dist} (x, L_s) ds
$$
$$
= C_1 e^{-b|x|} - C_1 e^{- I(\frac{x}{k}) k (1+o(1))} \ - \
C_1 b \int\limits_0^{I(\frac{x}{k}) k} e^{-s (1+ o(1))} e^{-b |x - z_s|} \, \frac{d}{ds} {\rm dist} (x, L_s) ds
$$
$$
\le  e^{-I(\frac{x}{k}) k (1+ o(1))}  \ - \
C_1 b \int\limits_0^{I(\frac{x}{k}) k} e^{- I(\frac{z_s}{k}) k (1+ o(1))} e^{-b |x - z_s|} \, \frac{d}{ds} {\rm dist} (x, L_s) ds
$$
$$
\le  e^{-I(\frac{x}{k}) k (1+ o(1))} - C_1 b  e^{-I(\frac{x}{k}) k (1+o(1))} \int\limits_0^{I(\frac{x}{k}) k} \frac{d}{ds} {\rm dist} (x, L_s) ds
$$
$$
 \le  e^{-I(\frac{x}{k}) k (1+ o(1))} + C_1 b  e^{-I(\frac{x}{k}) k (1+o(1))} |x| \le  e^{-I(\frac{x}{k}) k (1+ o(1))}.
$$
This inequality together with \eqref{first} imply \eqref{akLD-bis1bis} in the case $p=1$.

\medskip

To prove the upper bound \eqref{akLD-bis1bis} for $p>1$ and  $ \alpha_1 t \le k \le \alpha_2 t $ we rewrite $a^{\ast (k+1)}(x)$ as a sum
\begin{equation}\label{14A}
a^{\ast (k+1)}(x) =  \int\limits_{|z-x| <  h k^{1/p}} a^{\ast k} (z) a(x-z) dz +
\int\limits_{|z-x| \ge h k^{1/p}} a^{\ast k} (z) a(x-z) dz.
\end{equation}
The second integral in \eqref{14A} has an upper bound
$$
\max\limits_{|u| \ge  h k^{\frac1p}} a(u) \le C_1 e^{-b h^p k}.
$$
If the constant $h>0$ is taken in such a way that $b h^p > I(\frac{r}{\alpha_1})$, then $b h^p > I(\frac{r}{\alpha_1}) > I(\frac{x}{k})$ for any  $k \in [\alpha_1 t, \alpha_2 t] $. Thus, the second term in \eqref{14A} is bounded by \eqref{akLD-bis1bis}.

For $k \sim t$ and for arbitrary $\varkappa>0$ the first term in \eqref{14A} can be estimated from above as
$$
\int\limits_{|z-x| <  h k^{1/p}} a^{\ast k} (z) a(x-z) dz \le C_1 \int\limits_{|z-x| <  h k^{1/p}} a^{\ast k} (z) dz \le C_1  \Pr \{ |S_k -x| < \varkappa k \}
$$
$$
\le C_1 \exp \{ - \inf_{\rho \in A_{\varkappa}} I(\rho) k + o(k)  \} \le  e^{- I(\frac{x}{k}) k + o(k)},
$$
where $A_{\varkappa} = \{z: \ |z-\frac{x}{k}| < \varkappa \}$. Here we used the large deviations principle for estimating $ \Pr \{ |S_k -x| < \varkappa k \}$  and continuity of $I(r)$.

\medskip
In the case $p=1$ and $k \le \alpha_1 t$ we apply the upper bound
\begin{equation}\label{p1ksmall}
a(x) \le A \ \tilde a (|x|), \qquad \mbox{ with some constant } \; \quad A>1,
\end{equation}
where $\tilde a(|x|) = a_1 e^{-b |x|}$ is a spherically symmetric kernel satisfying \eqref{lt} with the same $b$.
Next we need the following statement for 1-D random variables.

\begin{proposition}\label{LD-proposition}
Let  $a(x), x \in R,$ satisfy \eqref{lt} with $p=1$ and condition ${\bf A_1}$ holds.
Then there exists positive constant $\tilde C_1$ such that \\
\begin{equation}\label{akLD}
a^{\ast k} (x) \le \tilde C_1 e^{- I(\frac{x}{k}) k (1+o(1)) }; \quad \mbox{\rm for all } \; k \ge 1.
\end{equation}

\end{proposition}

\begin{proof}
We represent $a^{\ast (k+1)}(x), \, x \in \mathbb{R}$ as follows:
\begin{equation}\label{three}
a^{\ast (k+1)}(x) = \int\limits_{-\infty}^0 a^{\ast k} (z) a(x-z) dz + \int_{0}^{x} a^{\ast k} (z) a(x-z) dz +
\int_{x}^{\infty} a^{\ast k} (z) a(x-z) dz.
\end{equation}
Since  $I(\frac{x}{k}) k < bx$, the first integral in \eqref{three} admits the estimate
$$
\int\limits_{-\infty}^0 a^{\ast k} (z) a(x-z) dz  \le C_1 e^{-b x}  \le e^{-I(\frac{x}{k}) k}.
$$
For the last integral in \eqref{three} we apply the Markov inequality:
\begin{equation}\label{A}
\int\limits_{x}^{\infty} a^{\ast k} (z) dz = P(S_k> x)   \le \inf\limits_\gamma \frac{\left( \mathbb{E} e^{\gamma X_1} \right)^k}{ e^{k \gamma \frac{x}{k}}} = \inf\limits_\gamma e^{k L(\gamma) - k \gamma  \frac{x}{k}} = e^{- I(\frac{x}{k}) k }.
\end{equation}
Then  we get for any $k$ and any $x>0$
$$
\int\limits_{x}^{\infty} a^{\ast k} (z) a(x-z) dz  \le C_1 P (S_k > x)  \le C_1 e^{-I(\frac{x}{k}) k }.
$$
To estimate the second integral  in \eqref{three} denote $\tilde F_k(x) = \int\limits_x^{\infty}  a^{\ast k} (z) dz = P (S_k > x)$. Then
$$
\int\limits_0^{x} a^{\ast k} (z) a(x-z) dz  \le C_1  \int\limits_0^{x} e^{-b(x-z)} d(-\tilde F_k (z)) =
 C_1  e^{-b(x-z)} \tilde F_k (z)|^0_x +  C_1 b \int\limits_0^{x} e^{-b(x-z)} \tilde F_k (z) dz \le
$$
$$
C_1  e^{-b x} +  C_1 b  \int\limits_0^{x} e^{- I(\frac{z}{k}) k - b(x-z)} dz \le C_1  e^{-b x} +  C_1 b x  e^{- I(\frac{x}{k}) k } \le  C_2 x e^{- I(\frac{x}{k}) k };
$$
we have used here the inequalities
$$
I(\frac{x}{k})  - I(\frac{z}{k}) < b \, \frac{x-z}{k} \quad \mbox{ for all } \; z \in (0,x), \quad \mbox{ and } \; \; I(\frac{x}{k}) k < bx.
$$
Considering $x =rt(1+ o(1))$ we obtain  estimate \eqref{akLD}  for all $k\geq 1$.
\end{proof}

Then using \eqref{akLD} we have
$$
a^{\ast k} (x) \le A^k \tilde a^{\ast k} (|x|) \le A^k  e^{- I_{\tilde a}(\frac{|x|}{k}) k (1+o(1)) }.
$$
Since $ k < \alpha_1 t$ with a small $\alpha_1$, then $\frac{|x|}{k} > \frac{|r|}{\alpha_1} \gg 1$,
and using asymptotic representation \eqref{Iinfty}  for  $I_{\tilde a}(s)$ as $s \to \infty$ and inequality \eqref{I(r)P6}, we conclude
that for any $\delta>0$ there exists $\alpha_1>0$ such that
\begin{equation}\label{smallk}
a^{\ast k} (x) \le A^k \tilde a^{\ast k} (|x|) \le A^k  e^{-b |x| (1-\delta)} = A^k e^{-b|r| t (1-\delta)} \le
e^{-b|r| t (1-\delta)+\alpha_1t\ln A}\leq
 e^{-I(r) t}.
\end{equation}
In order to obtain the last inequality we chose $\delta = \frac{(b|r|-I(r))}{4 b|r|}$ . Thus \eqref{akLD-bis3bis} is proved for $p=1$.

\medskip
If $p>1$ and $k \le \alpha_1 t$, then for sufficiently small $ \alpha_1$ recalling that $x =rt(1+o(1))$,
from the Markov inequality \eqref{A}  we have
\begin{equation}\label{4bis}
P \{ |S_k|  > \frac12 |x| \} \le   e^{- \tilde I(\frac{|x|}{2 k}) k } \le   e^{- \tilde I(\frac{|r|}{2 \alpha_1}) \alpha_1 t }  \le   e^{- 2 I(r) t },
\end{equation}
where $\tilde I(s)$ is the rate function for the 1-D random variable $|X|$.
Here we used the fact that the function $J(\alpha) = \alpha \tilde I(\frac{s}{\alpha})$ is decreasing in  $\alpha \in (0,1]$, that is a consequence of convexity of $\tilde I(s)$. Moreover, by \eqref{gti} we have $J(\alpha) \to \infty$ as $\alpha \to 0+$. Then using \eqref{4bis} we conclude that for a small enough constant $\alpha_1>0$ we get
\begin{equation*}\label{akpg1_bbis}
a^{\ast (k+1)}(x)  = \int\limits_{|z| \le \frac12 |x| } a^{\ast k}(z) \ a(x-z) \ dz + \int\limits_{|z|>\frac12 |x|} a^{\ast k}(z) \ a(x-z) \ dz
\end{equation*}
$$
\le  C_1 e^{- b \left(\frac{|x|}{2} \right)^p } + C_1 e^{- \tilde I(\frac{|r|}{2 \alpha_1}) \alpha_1 t }  \le   C_1 e^{- \tilde b \left(\frac{|r|}{2} \right)^p t^p} + C_1 e^{- \tilde I(\frac{|r|}{2 \alpha_1}) \alpha_1 t }  \le  \tilde C_2 e^{- I(r) t }.
$$
This completes the proof of estimates \eqref{akLD-bis1bis} - \eqref{akLD-bis3bis}.

The first relation in  \eqref{akLD-bis4bis} is evident. The proof of the second one is based on the same arguments
as those used in the case $p>1$.
\end{proof}

\medskip
Combining Stirling's formula with the estimates of Lemma \ref{LD-bis} we obtain the following statement.

\begin{corollary}\label{coro_1}
Let the assumptions of Lemma \ref{LD-bis} be fulfilled.
%Let \eqref{lt} hold, and denote $S(k,t) = k\ln t - k \ln k + k - I( \frac{x}{k}) k$.
If $x = r t (1+ o(1))$, then for all $k$ such that $\alpha_1 t \le k \le \alpha_2 t$ with arbitrary positive numbers $\alpha_2$ and $\alpha_1$, estimate \eqref{akLD-bis1bis} implies that
\begin{equation}\label{Ir}
\frac{t^k a^{\ast k}(x)}{k!} \le \exp \left\{ k\ln t - k \ln k + k - I( \frac{x}{k}) k +o(t) \right\} = \exp \{S(k,t)+o(t)\},  \quad t \to \infty,
\end{equation}
where $S(k,t) = k\ln t - k \ln k + k - I( \frac{rt}{k}) k$.
%for all $k$ such that $1 \le k \le \alpha_2 t$ in the case $p=1$ under condition ${\bf A_1}$, and

\end{corollary}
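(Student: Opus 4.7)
The plan is to obtain the desired inequality by taking logarithms on both sides and tracking the three contributions (Stirling, convolution bound, and the $k\ln t$ factor) separately, making sure that each error term is $o(t)$ uniformly in the range $\alpha_1 t \le k \le \alpha_2 t$.

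First I would apply Stirling's formula in the form $\ln k! = k\ln k - k + \tfrac12\ln(2\pi k) + O(1/k)$. Since $k$ ranges in $[\alpha_1 t,\alpha_2 t]$, the correction $\tfrac12\ln(2\pi k) + O(1/k) = O(\ln t) = o(t)$ uniformly. Consequently
\begin{equation*}
\ln\frac{t^k}{k!} = k\ln t - k\ln k + k + o(t),\qquad t\to\infty.
\end{equation*}

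Next I would invoke estimate \eqref{akLD-bis1bis} of Lemma \ref{LD-bis}, which gives $\ln a^{\ast k}(x) \le -I(x/k)k(1+o(1))$ in this range. The key point to check is that the error $I(x/k)k\cdot o(1)$ is indeed $o(t)$. Since $x=rt(1+o(1))$ and $\alpha_1 t\le k\le \alpha_2 t$, the ratio $x/k$ stays in a bounded set $K\subset\mathbb R^d$, and by the smoothness (and hence local boundedness) of $I$ on $K$, we have $I(x/k)=O(1)$. Together with $k = O(t)$ this yields $I(x/k)k = O(t)$, so $I(x/k)k\cdot o(1) = o(t)$.

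Adding the two bounds and using the definition $S(k,t)=k\ln t - k\ln k + k - I(x/k)k$ with $x=rt(1+o(1))$ gives
\begin{equation*}
\ln\frac{t^k a^{\ast k}(x)}{k!} \le k\ln t - k\ln k + k - I(x/k)k + o(t) = S(k,t) + o(t),
\end{equation*}
which is \eqref{Ir}. The only subtle point to monitor is the uniformity of the $o(t)$ remainder across the full range $[\alpha_1 t,\alpha_2 t]$; this is handled by the uniform boundedness of $x/k$ in a compact set and of $I$ on that set. Otherwise the proof is a straightforward bookkeeping combination of Stirling's formula with the convolution estimate.
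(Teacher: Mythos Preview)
Your proof is correct and follows exactly the approach the paper indicates: the paper's entire justification is the single sentence ``Combining Stirling's formula with the estimates of Lemma \ref{LD-bis} we obtain the following statement,'' and you have simply spelled out that combination carefully. The only point you leave slightly implicit is that passing from $I(x/k)k$ in the displayed inequality to $I(rt/k)k$ in the definition of $S(k,t)$ requires $|I(x/k)-I(rt/k)|k=o(t)$, but this follows immediately from your observation that $x/k$ and $rt/k$ both lie in a fixed compact set (and differ by $o(1)$) together with the continuity of $I$ there.
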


%Denote $\hat z = \mathrm{argmax} S(z,t)$, then $\hat z$ can be found as a solution of the equation
%$$
 %\ln \frac{t}{z} -  I(r \frac{t}{z}) + r \frac{t}{z} \,\nabla I(r \frac{t}{z})  = 0.
%$$
%It is convenient to introduce a new variable
% $\frac{t}{z} = \xi$. Then the last equation reads
%\begin{equation*}
%\ln \xi = I( \xi r) -   \xi r \nabla I(\xi r),
%\end{equation*}
%and it is the same as \eqref{eqeq}.

%Denote $\psi(x) = I(x) - x \nabla I(x)$. Then $\psi(x)<0$ for all $x \in R^d \backslash \{0\}$, because $I(0)=0,\ \nabla \nabla I(x) \ge 0$ %and $\nabla \nabla I(x) > 0$ for all $x:\ |x|<c_0$ from some neighborhood of $0$. Consequently, if $ \xi_r $ is
%the solution of \eqref{eqeq}, then
%$$
%0< \xi_r <1, \quad \hat z = \frac{t}{ \xi_r}>t,
%$$
%and we have
%$$
%S(\hat z,t) = t \left(\frac{1}{\xi_r} + \frac{1}{\xi_r}( \ln \xi_r - I(\xi_r r )) \right) =  t \left(\frac{1}{
%\xi_r} - r \nabla I( \xi_r r )\right)
%$$
%and
Recalling the definition of $\xi_r$ in \eqref{eqeq} and the function $\Phi$ in \eqref{Phi(r)} we have
\begin{equation}\label{Phi}
e^{-t + S(\hat z,t)} = \exp \left\{ t \left( -1 + \frac{1}{\xi_r}(1 +  \ln \xi_r - I(\xi_r r )) \right)\right\} = \exp \{ -
\Phi(r) t\},
\end{equation}
where $\hat z = \mathrm{argmax} S(z,t)$.
%where the function $\Phi(r)$ is given by \eqref{Phi(r)}.
If $x=rt(1+ o(1))$ as $t \to \infty$, then the following upper bound
\begin{equation}\label{UpperB1}
e^{-t} \, \frac{t^k a^{\ast k}(x)}{k!} \ \le \ e^{ - \Phi(r) t (1 + o(1)) }
\end{equation}
is valid for all $k$ from the interval $k \in (\alpha_1 t, \alpha_2 t)$.

\medskip

To  estimate  $v(x,t)$  from above we decompose the sum in (\ref{v}) into three parts:
\begin{equation}\label{D1v}
v(x,t) \ =  \ e^{-t}  \sum_{k<\alpha_1 t} \frac{ t^k  a^{\ast k} (x)}{k!} +  e^{-t}  \sum_{k=\alpha_1 t }^{\alpha_2 t} \frac{ t^k
a^{\ast k} (x)}{k!} +  e^{-t}  \sum_{k > \alpha_2 t} \frac{ t^k  a^{\ast k} (x)}{k!}.
\end{equation}
For the first sum in \eqref{D1v} we
apply upper bound \eqref{akLD-bis3bis}.  This together with \eqref{Phi} yield
\begin{equation}\label{127A}
a^{\ast k} (x) \le  e^{- I(r) t +o(t) }  =  e^{-t+S(t,t)+o(t)} \le e^{-t+S(\hat z,t)+o(t)} =  e^{-\Phi(r) t (1+ o(1)) },
\end{equation}
because $- I(r) t = S(t,t) - t$, and $\max_z S(z,t) = S(\hat z, t)$ with $\hat z >t$.
Consequently,
\begin{equation}\label{D1v.1}
e^{-t} \ \sum_{k=1}^{\alpha_1 t} \frac{ t^k  a^{\ast k} (x)}{k!}  \le   e^{ - \Phi (r) t (1+ o(1)) }, \qquad\mbox{as }t \to \infty.
\end{equation}
For the third sum, if $k > \alpha_2 t$ with $\alpha_2>2$ then  we have
$$
\frac{t^k}{k!} < \frac{t^{\alpha_2 t}}{(\alpha_2 t)!}< e^{(\alpha_2 - \alpha_2 \ln \alpha_2) t}.
$$
Choosing $\alpha_2>2$ such that $1 - \alpha_2 + \alpha_2 \ln \alpha_2 > \Phi (r)$, we obtain
\begin{equation}\label{D1v.3}
e^{-t} \ \sum_{k>2t} \frac{ t^k  a^{\ast k} (x)}{k!} \le  C_1   e^{(-1 + \alpha_2 - \alpha_2 \ln \alpha_2  ) t} < e^{- \Phi(r) t}.
\end{equation}

It remains to estimate the second sum on the right-hand side of \eqref{D1v}.
To this end we use \eqref{UpperB1}, then
\begin{equation}\label{D1v.2}
e^{-t} \ \sum_{k=\alpha_1 t}^{\alpha_2 t} \frac{ t^k  a^{\ast k} (x)}{k!} \le \alpha_2 t e^{ -\Phi (r) t (1 + o(1))} = e^{ -\Phi(r)
 t (1 + o(1))}, \qquad t \to \infty.
\end{equation}
Finally, in the region $ x=rt (1+o(1)), \; r\not=0$,  from \eqref{D1v.1} - \eqref{D1v.2} we deduce:
\begin{equation}\label{D1v.fin}
v(x,t) \le e^{ -\Phi (r) t (1 + o(1))}, \qquad t \to \infty.
\end{equation}

For $a(x)$ with a finite support we take $\alpha_1=r/\mu$ in \eqref{D1v}. Then the first sum on the right-hand side of \eqref{D1v} does not contribute. Estimating the two other sums relies on  \eqref{UpperB1}, \eqref{D1v.3} and \eqref{D1v.2} like in the case
$p>1$.
This completes the proof of \eqref{AsympMD}.

\medskip

It remains to show that the function $\Phi(r)$ satisfies the asymptotic relations in \eqref{Phi0-bis2}--\eqref{Phibs-bis}.
Considering the properties of the function $I(r)$, in particular \eqref{I0-bis}, it is easy to see
that
$ \xi_r  = 1 - \frac{r^2}{2 \sigma}+ o(r^2)$, as $r\to 0$. Recalling now the definition of $\Phi (r)$ in \eqref{Phi(r)}, we finally obtain asymptotic formula \eqref{Phi0-bis2}.
\medskip

The asymptotics of  $\Phi (r)$ for large $r$ depends crucially on the rate of decay  of  $a(x)$ at infinity.
We start with the case, when $a(x)$ satisfies \eqref{boundedsupp}. Then from Proposition \ref{I_MD} it follows that $I(\xi r) = \infty$ for all $\xi |r| > \mu$. Then the solution $\xi_r$  of equation \eqref{eqeq} satisfies the inequality $\xi_r < \frac{\mu}{|r|}$.
By the definition of $\Phi(r)$ we have
$$
\Phi (r) = 1 - \frac{1}{ \xi_r} +  \frac{1}{ \xi_r } \ln \frac{1}{ \xi_r } + \frac{1}{\xi_r } I(r \xi_r) \ge \frac{1}{ \xi_r }\Big( \ln \frac{1}{ \xi_r } - 1 \Big),
$$
Therefore, for large enough $r$,
$$
\Phi (r)  \ge \min\limits_{x \in (\frac{|r|}{\mu}, \infty)} x (\ln x-1) = \frac{|r|}{\mu} \, \Big(\ln   \frac{|r|}{\mu}-1\Big),
$$
and we obtain \eqref{Phibs-bis}.
\medskip

Since the principal term on the right-hand side of  \eqref{Iinfty-bisbis} only depends on $|r|$ as $r\to\infty$, then  in the case $p > 1$ for the solution $\xi_r$ of equation \eqref{eqeq}
we have  $\xi_r = \xi_{|r|}(1+o(1))$, as $r\to\infty$.  Therefore, we can reduce the general case to the spherically symmetric case
(or the 1-D case). Notice that for any $p \ge 1$ condition ${\bf A_p}$ implies \eqref{N0}.
The next statement describes the asymptotic behaviour of $\xi_r$ for large $r$ under the assumption that  \eqref{lt} and \eqref{N0} hold true.

\begin{proposition}\label{xi}
Let  \eqref{lt} and \eqref{N0} hold. Then
\begin{equation}\label{ur}
u(s) := s \, \xi_s  \to \infty, \quad \mbox{ as } \; s \to \infty.
\end{equation}
If $p>1$ and condition $ {\bf A_p}$ is fulfilled, then
\begin{equation}\label{hatxi}
\xi_s =  h_p \frac{(\ln s)^{1/p}}{s}  (1+o(1)), \quad \mbox{as} \quad  s \to \infty,
\end{equation}
where $ h_p$ is a constant depending on $p$ and $b$.
\end{proposition}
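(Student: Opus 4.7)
The plan is to reduce the defining equation \eqref{eqeq} for $\xi_s$ to an asymptotic scalar equation in the auxiliary variable $u(s):=s\xi_s$ and then solve it by iteration. Fixing a unit direction $\theta\in S^{d-1}$ and writing $r=s\theta$, equation \eqref{eqeq} becomes
$$
\ln\xi_s \;=\; I(u\theta)\;-\;u\,\theta\cdot\nabla I(u\theta).
$$
Under condition ${\bf A}_p$ the leading asymptotics of both $I$ and $\nabla I$ turn out to be independent of the direction $\theta$, so the right-hand side depends, to leading order, only on $u$.

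First I will establish \eqref{ur} under the weaker assumptions \eqref{lt} and \eqref{N0}. By \eqref{N0} and Proposition \ref{I_MD} the convex hull ${\cal A}=\mathbb{R}^d$, so $I$ is finite and smooth on all of $\mathbb{R}^d$. If $u(s_n)$ stayed bounded along some subsequence $s_n\to\infty$, then, by continuity of $I$ and $\nabla I$, the right-hand side of \eqref{eqeq} would remain bounded along $s_n$, while the left-hand side $\ln\xi_{s_n}=\ln(u(s_n)/s_n)\to-\infty$, a contradiction.

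To obtain \eqref{hatxi} for $p>1$, I need sharp asymptotics of $\nabla I$ that do not follow directly from \eqref{Iinfty-bisbis}. Here I invoke Legendre duality: $\nabla I(r)=\gamma^{\ast}(r)$, where $\gamma^{\ast}$ solves $\nabla L(\gamma)=r$. Using the sharp asymptotics $L(\gamma)=C(b,p)|\gamma|^{p/(p-1)}(1+o(1))$ supplied by ${\bf A}_p$, together with the convexity of $L$ and the implicit function theorem, I will derive
$$
\nabla I(r)\;=\;bp\,|r|^{p-2}r\,(1+o(1)),\qquad |r|\to\infty,
$$
uniformly in direction. Combining this with \eqref{Iinfty-bisbis} gives $I(u\theta)-u\,\theta\cdot\nabla I(u\theta)=-b(p-1)u^p(1+o(1))$ as $u\to\infty$, so that \eqref{eqeq} reduces to
$$
\ln u-\ln s \;=\; -b(p-1)u^p(1+o(1)),\qquad u\to\infty.
$$

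Since $\xi_s<1$ by Lemma \ref{Xi}, we have $u<s$, so this rewrites as $b(p-1)u^p=(\ln s-\ln u)(1+o(1))$. The ansatz $u=h_p(\ln s)^{1/p}(1+o(1))$ with $h_p=(b(p-1))^{-1/p}$ balances the leading term, and the verification $\ln u=\tfrac{1}{p}\ln\ln s+O(1)=o(\ln s)$ shows that the self-consistent correction is indeed of lower order; uniqueness of $\xi_s$ from Lemma \ref{Xi} then pins down $\xi_s=u/s$ to the form claimed in \eqref{hatxi}. The main obstacle, and the only step requiring genuine work, is the passage from the bare asymptotics of $I$ to the gradient asymptotics of $\nabla I$: since one cannot differentiate an asymptotic expansion in general, this step must be carried out on the cumulant side via condition ${\bf A}_p$ and Legendre inversion.
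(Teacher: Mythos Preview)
Your proof is correct and follows essentially the same route as the paper: a contradiction argument for \eqref{ur} (the paper uses monotonicity of $J(u)=I(u)-uI'(u)$ where you use continuity on a compact set, but the underlying idea is identical), and then for \eqref{hatxi} the same reduction of \eqref{eqeq} to $\ln(1/\xi_s)=b(p-1)(s\xi_s)^p(1+o(1))$ followed by solving for $\xi_s$. Your explicit justification of the gradient asymptotics $\nabla I(r)=bp|r|^{p-2}r(1+o(1))$ via Legendre duality is a welcome clarification---the paper obtains this implicitly in the proof of Proposition~\ref{I} through the asymptotics of $\gamma^\star$ and then cites only \eqref{Iinfty}, whereas you spell out why the passage from $I$ to $\nabla I$ is legitimate.
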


\begin{proof}
We first prove \eqref{ur}. If we assume that  $u(s)$ is bounded: $u = u(s) < a$, then for all $s>0$ the function $\ln \xi_s$ is bounded from below:
\begin{equation}\label{ur1}
\ln \xi_s = I(u) - u I'(u)> I(a) - a I'(a) > - \infty.
\end{equation}
We have used here the facts that  $J(u) = I(u) - u I'(u)$ is a decreasing function on $[0,+\infty)$, and due to condition \eqref{N0} the functions $I(u), I'(u)$ are finite for all $u>0$.
On the other hand,
$$
\ln \xi_s = \ln \frac{u(s)}{s} < \ln a - \ln s.
$$
For large $s$ this inequality  contradicts \eqref{ur1}. This proves \eqref{ur}.

The function $J(u) = I(u) - u I'(u)< 0$ is negative for all $u>0$, because $J(0)=0$, $J'(u) \le 0$ for   $u \ge 0$, and $J'(u)<0$ for $0 \le u<\kappa_0$ with some $\kappa_0>0$. In the case  $p>1$ combining this inequality with   \eqref{eqeq}, \eqref{Iinfty}  and \eqref{ur} we conclude that
$$
\ln \frac{1}{ \xi_s} \ =\ b(p-1)\left( r  \xi_s \right)^p(1+o(1)), \quad s \to \infty.
$$
Consequently, we get \eqref{hatxi} in the case $p>1$: $ \xi_s =   h_p \frac{(\ln s)^{1/p}}{s}  (1+o(1))$ with $h_p=\big(b(p-1)\big)^{-1/p}$.
\end{proof}

Inserting  \eqref{hatxi} into \eqref{Phi(r)}, we finally obtain asymptotic formulas \eqref{Phi-inftyG1-bis2} and \eqref{Phi-inftyG1}.

\medskip

In the case $p=1$ using \eqref{eqeq} and \eqref{Iinfty-bis} for large $r$ we get
$$
\Phi (r) = 1 - \frac{1}{ \xi_r}\Big(1 +  \ln \xi_r - I(\xi_r r) \Big) = 1 - \frac{1}{ \xi_r } + b|r| (1+o(1)).
$$
According to \eqref{ur} we have $u(r) = |r| \xi_r \to \infty$ as $|r| \to \infty$, consequently, $\frac{1}{ |r| \xi_r} \to 0$, and $ \frac{1}{ \xi_r} = o(|r|)$. Thus,
$$
\Phi (r) =  b \,|r|\, (1+o(1)) \quad \mbox{ as } \; |r| \to \infty,
$$
and asymptotic formula \eqref{Phi-infty1-bis2} is proved. Theorem \ref{LTail-2_MD} is completely proved.

\subsection{The region of large deviations. Proof of Theorem \ref{Asymptotics}}

In order to justify the asymptotics in  \eqref{AsympMD-bis} it suffices to prove that for $x=rt(1+o(1))$ we have
\begin{equation}\label{dopest1}
 e^{- \Phi(r) t (1+ \nu_1(t))}\leq v(x,t) \le e^{- \Phi(r) t (1+ \nu_2(t))},
\end{equation}
where $\nu_j (t) \to 0$ as $t\to\infty$, $j=1,2$. Since the upper bound has already been proved, see \eqref{AsympMD},
we proceed with the lower bound. Denote $\hat r=x/t$. Then $\hat r=r(1+o(1))$ as $t\to\infty$.

From the definition of
$\xi_r$ in \eqref{eqeq} by the implicit function theorem we obtain that $\xi_r$ is a smooth function of $r$. So is
$r\xi_r$. Letting $r_0^*=\xi_r r$ and $r^*=\xi_{\hat r} \hat r$, we then have $r^*=r_0^*(1+o(1))$.

We define $\gamma^*(r^*)\in\mathbb R^d$ as a solution to the equation $\nabla L(\gamma)=r^*$. By Proposition \ref{DR}
this equation has a unique solution. Moreover, $\gamma^*(r^*)$ is a smooth function of $r^*$. In particular,
  $\gamma^*_0=\gamma^*(r_0^*)=\gamma^*(r^*)(1+o(1))$, as $t\to\infty$.  We recall, see Proposition \ref{DR} again, that for a random variable
  $X_{\gamma^*}$ with the density
$a_{\gamma^*}(x) = \frac{a(x) e^{\gamma^* \cdot x}}{\Lambda(\gamma^*)}$ its expectation is equal to  $r^*$.

Consider a family of densities $\widetilde a_{\gamma^*}(x)=a_{\gamma^ *}(x+r^*)$ and the corresponding random variables $\widetilde X_{\gamma^*}=X_{\gamma^*}-r^*$.
\begin{lemma}\label{l_property_star}
There exists a neighbourhood $\mathcal{O}$ of $\gamma^*_0$ in $\mathbb R^d$ such that for all $\gamma^*\in \mathcal{O}$
the density $\widetilde a_{\gamma^*}$ possesses the following properties:
\begin{itemize}
\item [\bf a.] \ \ $\widetilde a_{\gamma^*}(x)\leq Ce^{-\mu |x|}$ \ for some $\mu>0$ and $C>0$.
\item [\bf b.] \ \  The matrix
$$
\sigma_{ij}(\gamma^*)=\int_{\mathbb R^d}x_ix_j\widetilde a_{\gamma^*}(x)\,dx
$$
is positive definite, $\sigma(\gamma^*)\zeta\cdot\zeta\geq\mu_1|\zeta|^2$ for some $\mu_1>0$ and for all $\zeta\in\mathbb R^d$.
\end{itemize}
The constants $\mu$, $\mu_1$ and $C$ do not depend on the choice of $\gamma^*\in\mathcal{O}$.
\end{lemma}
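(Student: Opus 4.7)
The plan is to derive both properties uniformly on a small enough neighbourhood $\mathcal{O}$ of $\gamma_0^*$, using that $\gamma_0^*$ lies in the interior of the effective domain of $L(\gamma)$. The first step is to fix $\mathcal{O}$: in the case $p=1$ conditions \eqref{lt} and ${\bf A_1}$ force the domain $\{\gamma:\Lambda(\gamma)<\infty\}$ to lie in $\{|\gamma|<b\}$, so the solution $\gamma_0^*$ of $\nabla L(\gamma)=r_0^*$ provided by Proposition \ref{DR} satisfies $|\gamma_0^*|<b$, and by continuity I can choose a compact $\mathcal{O}\ni\gamma_0^*$ together with a constant $b_0<b$ such that $|\gamma^*|\leq b_0$ and $\Lambda(\gamma^*)\geq c_0>0$ for all $\gamma^*\in\mathcal{O}$. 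In the case $p>1$, $L$ is finite everywhere, and only the lower bound on $\Lambda$ on some compact $\mathcal{O}$ is required.

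For part (a) I would start from
\[
\widetilde a_{\gamma^*}(x) \;=\; \frac{a(x+r^*)\,e^{\gamma^*\cdot(x+r^*)}}{\Lambda(\gamma^*)} \;\leq\; \frac{C_1}{c_0}\,\exp\bigl(-b|x+r^*|^p+\gamma^*\cdot(x+r^*)\bigr).
\]
For $p=1$ the exponent is bounded by $-(b-b_0)|x+r^*|$, and the reverse triangle inequality $|x+r^*|\geq|x|-|r^*|$ yields $\widetilde a_{\gamma^*}(x)\leq Ce^{-\mu|x|}$ with $\mu=b-b_0$; since $r^*$ stays in a bounded set as $\gamma^*$ ranges over $\mathcal{O}$, the constant $C$ is uniform. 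For $p>1$ the super-linear decay $e^{-b|y|^p}$ absorbs any linear term $|\gamma^*|\,|y|$, and the same bound with an arbitrarily prescribed $\mu>0$ follows.

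For part (b) the key observation is that $\sigma(\gamma^*)$ equals the covariance matrix of the tilted variable $X_{\gamma^*}$, since $\widetilde X_{\gamma^*}=X_{\gamma^*}-r^*$ is centred by the defining relation $\nabla L(\gamma^*)=r^*$. A direct differentiation of $L=\ln\Lambda$ identifies this covariance with the Hessian $\nabla\nabla L(\gamma^*)$. The non-degeneracy of $a$ (already encoded in the positive definite matrix $\sigma=\nabla\nabla L(0)$ from \eqref{a2}) propagates via strict convexity of $L$ to all $\gamma$ in the interior of its effective domain, so $\nabla\nabla L(\gamma_0^*)>0$; continuity of $\nabla\nabla L$ on $\mathcal{O}$ then delivers the uniform lower bound $\sigma(\gamma^*)\zeta\cdot\zeta\geq\mu_1|\zeta|^2$ after shrinking $\mathcal{O}$ if necessary.

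The main obstacle is maintaining uniformity of the constants in the case $p=1$, because one must keep $\gamma^*$ strictly away from the boundary $\{|\gamma|=b\}$ where $\Lambda$ explodes; this is settled once and for all by the initial choice of $\mathcal{O}$ and $b_0$. After that step, both (a) and (b) reduce to elementary continuity arguments together with standard facts about exponential tilting.
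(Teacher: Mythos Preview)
Your argument for part (a) matches the paper's almost verbatim: both choose $\mathcal{O}$ so that $|\gamma^*|$ stays uniformly below $b$ in the case $p=1$, bound $\Lambda(\gamma^*)$ below, and read off the exponential tail from \eqref{lt}; the case $p>1$ is declared trivial in both.

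For part (b) you take a genuinely different route. You identify $\sigma(\gamma^*)$ with the Hessian $\nabla\nabla L(\gamma^*)$ and invoke strict convexity plus continuity of the Hessian on the compact $\mathcal{O}$ to get the uniform lower bound. The paper instead argues directly from part (a): the uniform exponential tail forces at least half the mass of $\widetilde a_{\gamma^*}$ into a fixed ball $Q_{R_0}$, and a further slab $\Pi_{\delta_0}=\{|x\cdot\theta|<\delta_0\}$ of uniformly small mass is removed, leaving $\sigma(\gamma^*)\theta\cdot\theta\geq\tfrac14\delta_0^2$ for every unit $\theta$. Your approach is slicker and uses a standard exponential-family identity, but it tacitly relies on twice differentiability and continuity of $L$ at $\gamma_0^*$ (which does hold here, via dominated convergence from part (a)). The paper's argument is more elementary and self-contained, deriving positivity purely from the tail bound without differentiating $L$; it also makes the uniformity in $\gamma^*$ completely explicit since $R_0$ and $\delta_0$ depend only on the constants $C,\mu$ from part (a). Both are correct.
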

\begin{proof}
If $p=1$ then under condition ${\bf A}_1^s$ we have $|\gamma^*_0|<b$.   We can choose sufficiently small neighbourhood
$\mathcal{O}$
of $\gamma^*_0$ in such a way that the inequality $b-|\gamma^*|\geq \frac12(b-|\gamma^*_0)|$ holds for all $\gamma^*\in\mathcal{O}$.
 It is clear that $\Lambda(\gamma^*)\geq C>0$ for all $\gamma^*\in\mathbb R^d$.
This implies, in view of \eqref{lt} and the definition of $\widetilde a_{\gamma^*}$,  the first statement of Lemma with $\mu_1=\frac12(b-|\gamma^*_0|)$. If $p>1$, then this statement is obvious.

The second statement of Lemma is a straightforward consequence of the first one. Indeed, it follows from ${\bf a.}$ that
there exists $R_0>0$ such that
$$
\int\limits_{Q_{R_0}}\widetilde a_{\gamma^*}(x)dx\geq \frac12
$$
for all $\gamma^*\in\mathcal{O}$, here $Q_{R_0}$ stands for the ball of radius $R_0$ centered at the origin.
Then for any $\theta \in S^{d-1}$  we have
$$
\sigma_{ij}(\gamma^*)\theta_i\cdot\theta_j=\int\limits_{\mathbb R^d}(x\cdot\theta)^2\widetilde a_{\gamma^*}(x)\,dx
\geq \int\limits_{Q_{R_0}\setminus \Pi_\delta}(x\cdot\theta)^2\widetilde a_{\gamma^*}(x)\,dx,
$$
where $\Pi_\delta=\{x\in Q_{R_0}\,:\,|x\cdot\theta|<\delta\}$. Due to ${\bf a.}$ there exists  $\delta_0>0$ such that
 $\int_{\Pi_{\delta_0}}\widetilde a_{\gamma^*}(x)\,dx\leq \frac14$ for all $\gamma^*$ and for all
$\theta\in S^{d-1}$.  Therefore,
$$
\int\limits_{Q_{R_0}\setminus \Pi_{\delta_0}}(x\cdot\theta)^2\widetilde a_{\gamma^*}(x)\,dx\geq \frac14\delta_0^2.
$$
This yields ${\bf b.}$
\end{proof}
It follows from Lemma \ref{l_property_star} that the local limit theorem applies to a family of i.i.d. random variables with the density $\widetilde a_{\gamma^*}$, see Theorems 19.1 and  19.2 in \cite{BhaRao}.   Therefore,
$$
\widetilde a_{\gamma^*}^{\ast k}(0)=(2\pi k)^{-\frac d2}|\sigma(\gamma^*)|^{-1}\big(1+o(1)\big),
$$
as $k\to\infty$, and
\begin{equation}\label{lltrel}
 a_{\gamma^*}^{\ast k}(kr^*)=(2\pi k)^{-\frac d2}|\sigma(\gamma^*)|^{-1}\big(1+o(1)\big).
\end{equation}
 Moreover, by Theorem 19.2 in \cite{BhaRao}, the convergence is uniform in $\gamma^*\in \mathcal{O}$.

\bigskip
According to \eqref{N3},
$$
a^{\ast k}(k r^*) = a^{\ast k}_{\gamma^*}(k r^*) e^{-I(r^*) k}.
$$
Take $k=\left[\frac t\xi_{\hat r}\right]$, where $[\cdot]$ stands for the integer part. Then $kr^*=rt(1+o(1))=x(1+o(1))$,
as $t\to\infty$. Considering \eqref{Phi(r)},
\eqref{Phi} and \eqref{lltrel} and the fact that the convergence in \eqref{lltrel} is uniform in $\gamma^*\in \mathcal{O}$, we conclude that, under this choice of $k$,
$$
\frac{t^k}{k!}a^{\ast k}(k r^*) =a^{\ast k}_{\gamma^*}(k r^*)e^{-\Phi(\hat r)t(1+o(1))}
=e^{-\Phi(r)t(1+o(1))}.
$$
This yields the desired lower bound in \eqref{dopest1}.

{\bf Acknowledgements.} The authors are grateful to Prof. M. Lifshits for fruitful discussions on large deviation principle. The third and the fourth authors would like to thank the Department of Mathematics
of the University of Bielefeld for hospitality.

\end{document}